\newtheoremstyle{ptheorem}{1em}{0em}{\itshape}{}{\bfseries}{.}{.5em}{\thmname{#1}\thmnumber{
		#2}\thmnote{ (\hspace{-1sp}{#3})}}
\theoremstyle{ptheorem}
\newtheorem{thm}{Theorem}[section]
\newtheorem{pro}[thm]{Proposition}
\newtheorem{lem}[thm]{Lemma}
\newtheorem{cor}[thm]{Corollary}
\newtheoremstyle{hdef}{1em}{0em}{}{}{\bfseries}{.}{.5em}{\thmname{#1}\thmnumber{
		#2}\thmnote{ (\hspace{-.01pt}{#3})}}
\theoremstyle{hdef}
\newtheorem{dfn}[thm]{Definition}
\newtheorem{rem}[thm]{Remark}
\newtheorem{exa}[thm]{Example}
\numberwithin{equation}{section}
\numberwithin{figure}{section}
\begin{document}

\title{Stieltjes {analytic} functions and higher order linear differential equations}
\author{Francisco J. Fernández$^{*}$, Ignacio Márquez Albés$^{**\dagger}$ and F. Adri\'an F. Tojo$^{*\dagger}$}
\date{}

\author{Víctor Cora$^{*}$\\
	\normalsize e-mail: victor.cora.calvo@usc.es\\
	F. Javier Fernández$^{*}$\\
	\normalsize e-mail: fjavier.fernandez@usc.es\\
	F. Adri\'an F. Tojo$^{*\dagger}$ \\
	\normalsize e-mail: fernandoadrian.fernandez@usc.es\\ \normalsize \emph{$^{*}$Departamento de Estatística, Análise Matemática e Optimización},\\ \normalsize \emph{Universidade de Santiago de Compostela, 15782, Facultade de Matemáticas, Santiago, Spain.}\\\normalsize
	\\\normalsize \emph{$^{\dagger}$CITMAga, 15782, Santiago de Compostela, Spain}}

\date{}

 \maketitle

\medbreak

\begin{abstract}
		In this work we develop a theory of Stieltjes-analytic functions. We first define the Stieltjes monomials and polynomials and we study them exhaustively. Then, we introduce the $g$-analytic functions locally, as an infinite series of these Stieltjes monomials and we study their properties in depth and how they relate to higher order Stieltjes differentiation. We define the exponential series and prove that it solves the first order linear problem. Finally, we apply the theory to solve higher order linear homogeneous Stieltjes differential equations with constant coefficients.
		\end{abstract}

\noindent \textbf{2020 MSC:} 26A24, 26E05, 34A36, 34A25.

\medbreak

\noindent \textbf{Keywords and phrases:} Stieltjes derivative, analytic functions, higher order linear differential equations, exponential, Stieltjes polynomials 

\section{Introduction}

In recent times, there has been a rise in popularity of the Stieltjes derivative. It turns out that Stieltjes differential equations model more precisely some systems and phenomena than classical ordinary differential equations do, as authors have shown in \cite{LopezPouso2018,Pouso2018,RodrigoTheory}. The reason behind it is that the Stieltjes derivative generalizes times scales \cite{RodrigoTheory}, which is a theory that allows differential equations to have impulsive and stationary behavior. This is particularly interesting when modeling systems that suffer brusque changes or stay latent for some periods of time.

In the literature related to Stieltjes differential equations we can find from foundational works regarding the Stieltjes derivative \cite{PousoRodriguez}, to the study of the first order linear problem along with Picard and Peano type existence results \cite{RodrigoTheory} as well as existence results in another settings \cite{LopezPouso2018,Pouso2018}. In \cite{Onfirst}, for the first time, the authors were able to Stieltjes-differentiate functions at every point of their domain. This allows to successfully define the notion of higher order Stieltjes derivatives and, thus, to consider higher order differential problems \cite{Wronskian,Onfirst}. 

Since we can differentiate functions an arbitrary number of times, we can define the space of $\mathcal{C}^\infty$-differentiable (or $\mathcal{C}^k$-differentiable) functions, as they do in \cite{Onfirst}. A question that comes to mind is: what non trivial examples of $\mathcal{C}^\infty$-differentiable functions can we give? In this work we focus on what it should be a smaller subset, that is, the space of Stieltjes-analytic functions. Note that these functions are not obvious to define. First, we have to clarify what a Stieltjes-monomial is, giving birth to a Stieljtes-polynomial theory, so that then we can define the Stieltjes-analytic functions as an infinite series of these monomials. This is no trivial task, as we have to pay attention to several technical difficulties related to the specific behavior of Stieltjes differentiation.

We then apply this Stieltjes-analytic function theory to find solutions of differential equations, analogously to how it is used in the classical case. We show a way of translating any higher order linear Stieltjes differential equation with constant coefficients to a difference equation on the coefficients of a Stieltjes-analytic function, which always has a solution. We then prove that such Stieltjes-analytic function actually solves the original problem. In particular, we apply this method to the first order linear problem and define the exponential series. We show it is indeed a Stieltjes-analytic function, study its properties, interval of convergence and whether or not it has an analytic continuation to the whole real line. We then compare the exponential series to the exponential function given in \cite{RodrigoTheory,Onfirst}, the solution of the first order linear problem that the literature considers, and show that they are equivalent.

The structure of this work is as follows: In Section~\ref{preliminares} we present some preliminary concepts and definitions needed to prove the statements in the following sections. In Section~\ref{monomios} we introduce the notion of $g$-monomial and $g$-polynomial. We study their properties and relations to monomials associated to other (special) derivators which we will later be able to calculate explicitly. In Section~\ref{analitica} we finally introduce the Stieltjes-analytic functions. We study and compare them with the classical analytic functions. We show some examples of Stieltjes-analytic functions with interesting behaviors as well. Finally, in Section~\ref{exponencial}, we apply the Stieltjes-analytic function theory to differential equations and we show, as an application, a method that solves any higher order linear homogeneous Stieltjes differential equation with constant coefficients as well as some nonhomogeneous cases. We then define the exponential series. We study where it converges, what rules it follows when we change the center point, its relation to some specific exponential series and compare it to the exponential function defined in \cite{RodrigoTheory,Onfirst}.

\section{Preliminaries}
\label{preliminares}
In this Section we provide some basic notions related to the Stieltjes derivative which will be needed later on to introduce the Stieltjes-analytic functions. We recommend the reader to check the cited bibliography, in particular \cite{Onfirst,RodrigoTheory,TesisIgnacio}.
\subsection{Derivators}
\begin{dfn}
	\label{derdef}
	We will call any $g:\mathbb{R}\to \mathbb{R}$ left-continuous non-decreasing function a \emph{derivator}. To fix notation, we will reserve the letter $g$ for derivators. 
\end{dfn}
It is actually very common to find derivators in the literature defined only on an interval ${[a,b]\subset\mathbb R}$. However, we will assume without loss of generality that every derivator is defined on the whole real line. To see this, take a derivator ${\widehat{g}:[a,b]\to \mathbb{R}}$, with $a<b$. We can frame $\widehat{g}$ in Definition~\ref{derdef} by taking
\[  g(x)=\begin{dcases}
	\widehat{g}(b)+x-b,& x> b,\\
	\widehat{g}(x),& x\in[a,b],\\
	\widehat{g}(a)+x-a,&x<a.
\end{dcases}\]  

Given a derivator $g$, we define the pseudometric
\[  d_g(x,y)=\left\lvert g(x)-g(y)\right\rvert \text{, for } x, y \in \mathbb{R}.\]  
\begin{dfn}
	Given any $x\in\mathbb{R}$ and $\varepsilon>0$, we define the \emph{ball of center $x$ and radius $\varepsilon$} as the set
	\[  B_g(x,\varepsilon): =\{ y\in \mathbb{R} \ :\   \ d_g(x,y)<\varepsilon \}. \]  
	Define
	\[  \tau_g:=\{U\subset\mathbb{R} \ :\  \forall x\in U\,\, \exists \varepsilon >0 : B_g(x,\varepsilon)\subset U \}. \]  
\end{dfn}
$\tau_g$ is a topology where the balls are open sets. We denote by $\tau_u$ the usual topology.

From now on, $\mathbb{F}$ will denote either ${\mathbb R}$ or ${\mathbb C}$.
\begin{dfn}
	Given any $X\subset \mathbb{R}$, we say a function $f:X \to \mathbb{F}$ is \emph{$g$-continuous at a point $x\in X$} if 
	\[  \forall \varepsilon >0 \hspace{.15cm}\exists \delta >0 \hspace{.05cm} \text{ such that }\hspace{.05cm} y\in B_g(x,\delta)\cap X \Rightarrow \left\lvert f(x)-f(y)\right\rvert<\varepsilon.\]  
	We say $f$ is \emph{$g$-continuous on $X$} if it is $g$-continuous at every point $x\in X$.
\end{dfn}
\begin{pro}[{\cite[Proposition 3.2]{RodrigoTheory}}]
	\label{continua}
	If $f:X \to \mathbb{F}$ is $g$-continuous on $X$, then
	\begin{enumerate}[noitemsep, itemsep=.1cm]
		\item[~1.] $f$ left-continuous at every point $x\in X$;
		\item[~2.] if $g$ is continuous at $x\in X$, then $f$ is continuous at $x\in X$;
		\item[~3.] if $g$ is constant on some $[\alpha,\beta]$, then $f$ is constant on $[\alpha,\beta]\cap X$.
	\end{enumerate}
\end{pro}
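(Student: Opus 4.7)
The plan is to reduce each of the three claims directly to the definition of $g$-continuity, using only the pointwise behavior of $g$. Throughout, fix $x\in X$ and $\varepsilon>0$, and let $\delta=\delta(x,\varepsilon)>0$ be provided by the definition of $g$-continuity of $f$ at $x$, so that $y\in B_g(x,\delta)\cap X$ implies $|f(x)-f(y)|<\varepsilon$.

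For part~1, I would use that $g$ is left-continuous at $x$ by the very definition of derivator: there exists $\eta>0$ such that $y\in(x-\eta,x]$ forces $|g(x)-g(y)|<\delta$, i.e.\ $y\in B_g(x,\delta)$. Intersecting with $X$ and applying the $g$-continuity estimate gives $|f(x)-f(y)|<\varepsilon$, which is exactly left-continuity of $f$ at $x$ in the usual topology. For part~2, under the additional hypothesis that $g$ is continuous at $x$, $g$ is also right-continuous there, so the same argument applied on the right yields an $\eta'>0$ with $y\in[x,x+\eta')$ implying $y\in B_g(x,\delta)$, hence $|f(x)-f(y)|<\varepsilon$. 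Combining with part~1 gives continuity of $f$ at $x$ in $\tau_u$.

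For part~3, the key observation is that if $g$ is constant on $[\alpha,\beta]$, then for any two points $x,y\in[\alpha,\beta]$ one has $d_g(x,y)=|g(x)-g(y)|=0$, so $y\in B_g(x,\delta)$ for \emph{every} $\delta>0$. Fixing $x\in[\alpha,\beta]\cap X$ (if the intersection is empty there is nothing to prove) and any $y\in[\alpha,\beta]\cap X$, the $g$-continuity of $f$ at $x$ gives $|f(x)-f(y)|<\varepsilon$ for all $\varepsilon>0$, so $f(y)=f(x)$, and $f$ is constant on $[\alpha,\beta]\cap X$.

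There is essentially no obstacle in this proof: the whole content is that the $g$-ball $B_g(x,\delta)$ contains a genuine one-sided Euclidean neighborhood whenever $g$ is (one-sidedly) continuous at $x$, and degenerates to contain an entire plateau of $g$. The mild care needed is to invoke the left-continuity baked into the definition of derivator for part~1, and to handle the (trivial) case $[\alpha,\beta]\cap X=\emptyset$ in part~3.
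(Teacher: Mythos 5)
Your proof is correct; all three parts follow cleanly from the definition of $g$-continuity together with the left-continuity of $g$ built into the notion of derivator, and the argument is the standard one. Note that the paper itself gives no proof here — it simply cites \cite[Proposition 3.2]{RodrigoTheory} — so there is nothing to compare against beyond confirming that your reduction to the behavior of the $g$-balls (one-sided Euclidean neighborhoods when $g$ is one-sidedly continuous, whole plateaus when $g$ is constant) is exactly the intended argument.
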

Denote by $\mathcal{C}_g(X,\mathbb{F})$ the set of $g$-continuous maps defined on $X$ that take values on $\mathbb{F}$. We denote by $\mathcal{BC}_g(X,\mathbb{F})$ the subset of bounded $g$-continuous maps. Both these sets are vector spaces.

\begin{pro}
	\label{numerable}
	$\{ B_g(a,r)\ |\ a,r\in\mathbb Q\}$ is a countable basis of $\tau_g$. 
	As a consequence, $\tau_g$ is second countable.
\end{pro}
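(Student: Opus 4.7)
My plan is as follows. The family $\cB=\{B_g(a,r)\,:\,a,r\in\bQ\}$ is countable because it is indexed by $\bQ\times\bQ$, and each of its members belongs to $\tau_g$ (the balls being open for $\tau_g$, as already observed in the excerpt). Showing that $\cB$ is a base for $\tau_g$ therefore amounts to producing, for every $x\in U\in\tau_g$, rationals $a,r$ with $x\in B_g(a,r)\subset U$. By the very definition of $\tau_g$ we can fix $\varepsilon>0$ with $B_g(x,\varepsilon)\subset U$, so the problem reduces to approximating each such ball from inside by a ball with rational center and rational radius.

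The crucial auxiliary fact is the following claim: for every $x\in\bR$ and every $\delta>0$ there exists $a\in\bQ$ with $|g(x)-g(a)|<\delta$. I would deduce this from the left-continuity of $g$ at $x$: choose $y<x$ with $g(x)-g(y)<\delta$; by the monotonicity of $g$, every $z\in[y,x]$ satisfies $0\le g(x)-g(z)<\delta$; now pick any rational $a\in[y,x]$, which exists by density of $\bQ$ in $\bR$. Granted this, apply it with $\delta=\varepsilon/3$ to obtain a rational $a$ with $|g(x)-g(a)|<\varepsilon/3$, and pick any rational $r\in(\varepsilon/3,2\varepsilon/3)$. Then $x\in B_g(a,r)$ since $|g(x)-g(a)|<\varepsilon/3<r$; and for any $y\in B_g(a,r)$, the triangle inequality for $d_g$ yields
\[
|g(x)-g(y)|\le|g(x)-g(a)|+|g(a)-g(y)|<\varepsilon/3+r<\varepsilon,
\]
so $B_g(a,r)\subset B_g(x,\varepsilon)\subset U$. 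This establishes that $\cB$ is a countable base, and second countability of $\tau_g$ is then just its definition.

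The main (and only) obstacle is the auxiliary claim, which is subtler than its classical metric-space analogue because $d_g$ is a pseudometric: the density of $\bQ$ in $\bR$ does not automatically transport to density of $g(\bQ)$ near each value $g(x)$, since $g$ may have jumps. One cannot use approximation from the right in general, since $g$ is not assumed right-continuous; the left-continuity imposed in Definition~\ref{derdef} is precisely what makes the one-sided approximation succeed. Everything else is a routine triangle-inequality argument.
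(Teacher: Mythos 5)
Your proof is correct and follows essentially the same route as the paper: both use the left-continuity and monotonicity of $g$ to find a rational $a$ with $d_g(a,x)$ small, then a rational radius and the triangle inequality for $d_g$ to squeeze $B_g(a,r)$ between $x$ and $B_g(x,\varepsilon)$. The only differences are cosmetic (your $\varepsilon/3$ splitting versus the paper's $\varepsilon/2$, and your interval $[y,x]$ versus the paper's rational sequence approaching $x$ from the left).
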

\begin{proof}
	Fix some $x\in\mathbb{R}$ and $\varepsilon>0$. We will prove that there are $a_x,r_x\in\mathbb Q$ such that $x\in B_g(a_x,r_x)\subset B_g(x,\varepsilon)$. Since $g$ is left-continuous, there exists a sequence $\{a_n\}_{n\in\mathbb N}\subset \mathbb Q$ that converges to $x$ from the left, and is such that
	\[  d_g(a_n,x)=\left\lvert g(a_n)-g(x)\right\rvert\to 0.\]  
	Hence, we can choose some $n\in\mathbb N$ such that for $a_x=a_n$, $d_g(a_x,x)<\frac{\varepsilon}{2}$. Let $r_x\in\mathbb Q$ be such that $d_g(a_x,x)<r_x<\frac{\varepsilon}{2}$. Thus, $x\in B_g(a_x,r_x)$. Now, for all $y\in B_g(a_x,r_x)$, we have
	\[  d_g(x,y)\leq d_g(x,a_x)+d_g(a_x,y)<\varepsilon.\]  
	Then $B_g(a_x,r_x)\subset B_g(x,\varepsilon)$. For any $U\in\tau_g$, we have 
	\[  U=\bigcup_{x\in U} B_g(x,\varepsilon_x),\]  
	where $\varepsilon_x$ is so that $B_g(x,\varepsilon_x)\subset U$. Now, there are $a_x,r_x\in\mathbb Q$ such that $x\in B_g(a_x,r_x)\subset B_g(x,\varepsilon_x)\subset U$ for all $x\in U$. Clearly,
	\[  U=\bigcup_{x\in U} B_g(a_x,r_x).\qedhere\]  
\end{proof}
Denote $g(x^{+})=\lim\limits_{y\to x^{+}}g(y)\in\mathbb{R}$, for any $x\in\mathbb{R}$. Define $\Delta g(x):=g(x^{+})-g(x)$. $\Delta g(x)$ measures the \emph{jump} of $g$ at any given point $x\in\mathbb R$. 

\begin{dfn}
	Given a derivator $g$, define \[  D_g=\{x\in\mathbb{R} \ |\ \Delta g(x)>0 \}\]  
	as the \emph{set of discontinuities} of $g$. Define also
	\[  C_g=\{x\in\mathbb{R} \ |\ g\text{ is constant on } (x-\varepsilon,x+\varepsilon)\text{ for some } \varepsilon>0 \}.\]  
	Note that $C_g$ is open in the usual topology. Hence, we can write
	\[  C_g=\bigcup_{n\in\Lambda}(a_n,b_n),\]  
	where $\Lambda\subset\mathbb N$, $(a_k,b_k)\cap(a_l,b_l)=\emptyset$ if $k\neq l$ and $a_n,b_n\in\mathbb{R}\cup\{+\infty,-\infty\}$ for $n\in\Lambda$. With this notation, we denote $N_g^{-}:=\{a_n\}_{n\in\Lambda}-D_g$, $N_g^{+}:=\{b_n\}_{n\in\Lambda}-D_g$ and $N_g:=N^{-}_g\cup N^{+}_g$. Note that $\{(a_n,b_n)\ |\ n\in\mathbb N\}$ refers to the connected components of $C_g$.
\end{dfn}

\subsection{$g$-derivative}

\begin{dfn}[{\cite[Definition 3.1]{Onfirst}}]
	\label{derivadadef}
	Let $a,b\in\mathbb{R}$ be such that $a<b$, $a\notin N^{-}_g$ and $b\notin D_g\cup N^{+}_g\cup C_g$. We define the \emph{Stieltjes derivative} or $g$-\emph{derivative} of a function $f:[a,b]\to\mathbb{F}$ at a point $x\in[a,b]$ as
	\begin{equation}
		\label{derivada2}
		f'_g(x)= \begin{dcases}
			\lim\limits_{y\to x^+}\frac{f(y)-f(x)}{g(y)-g(x)},& x\in D_g,\\[  1em]
			\lim\limits_{y\to x}\hspace{.175cm}\frac{f(y)-f(x)}{g(y)-g(x)}, & x\notin D_g\cup C_g,\\[  1em]
			\lim\limits_{y\to b_n^+}\frac{f(y)-f(b_n)}{g(y)-g(b_n)}, & x\in (a_n,b_n)\subset C_g,
		\end{dcases}
	\end{equation}
	where $(a_n,b_n)$ is a connected component of $C_g$. Suppose $x$ falls in the second case, it could be that $x\in N_g$, then we have to understand the corresponding limit as follows
\begin{equation*}
		f'_g(x)= \begin{dcases}
			\lim\limits_{y\to x^+}\frac{f(y)-f(x)}{g(y)-g(x)},& x\in N^{+}_g,\\[  1em]
			\lim\limits_{y\to x^-}\frac{f(y)-f(x)}{g(y)-g(x)},& x\notin N^{-}_g. 		
			\end{dcases}
\end{equation*}

	Let $\Omega\subset \mathbb{R}$ be an open { set} of the usual topology that satisfies
	\begin{equation}
		\label{2222221354324}\forall x\in\Omega \text{ such that } x\in(a_n,b_n)\subset C_g, { \text{ we have that }} b_n\in\Omega,\end{equation}
	where $(a_n,b_n)$ is a connected component of $C_g$. Then, for $f:\Omega\to\mathbb{F}$ and $x\in\Omega$, we define the Stieltjes derivative of $f$ at $x$ following~\eqref{derivada2}.
\end{dfn}

Observe that Definition~\ref{derivadadef} assumes functions have domains where the $g$-derivative is well-defined at all points. Note that we can always define the Stieltjes derivative of a function $f$ at a point $x\in \mathbb R$ as long as $f$ is defined in some neighborhood $(x-\varepsilon,x+\varepsilon)$ of $x$ and $x\notin C_g$, just following~\eqref{derivada2}. Thus, we may say that some function $f$ defined on a neighborhood of a point $x\notin C_g$ is $g$-differentiable at $x$ without its domain satisfying the hypothesis of Definition~\ref{derivadadef}. We strongly recommend the reader to explore the details of this definition as presented in \cite[Definition 3.1]{Onfirst}, \cite[Remarks~3.2 and~3.3]{Onfirst} and \cite[Definition 3.7]{Onfirst}.

There are examples of domains where the Stieltjes derivative is just not defined at all points. In fact, $\mathbb R$ can be one of those domains as we will see in Example~\ref{55886}.
\begin{rem}
	It follows from Definition~\ref{derivadadef} that, for $x\in D_g$, $f'_g(x)$ exists if and only if $f(x^+)$ exists and we have that 
	\[  f'_g(x)=\frac{f(x^+)-f(x)}{\Delta g(x)}. \]  
\end{rem}

\begin{pro}
	\label{opder}
	Fix $a,b\in\mathbb{R}$ such that $a<b$, $a\notin N^{-}_g$ and $b\notin D_g\cup N^{+}_g\cup C_g$. Given $x\in [a,b]$, denote

	\[  x^*=\begin{cases}
		x & \text{ if } x\notin C_g,\\
		b_n & \text{ if } x\in (a_n,b_n)\subset C_g.

	\end{cases} \]  
	Then, given $f_1,f_2:[a,b]\to \mathbb{F}$ $g$-differentiable at $x$, we have that
	\begin{enumerate}[noitemsep, itemsep=.1cm]
		\item[~1.] $\lambda_1f_1+\lambda_2f_2$ is $g$-differentiable at $x$ for any $\lambda_1,\lambda_2\in\mathbb{F}$ and
		\[  (\lambda_1f_1+\lambda_2f_2)'_g(x)= \lambda_1(f_1)'_g(x)+\lambda_2(f_2)'_g(x).\]  

		\item[~2.] $f_1f_2$ is $g$-differentiable at $x$ and
		\[  (f_1f_2)'_g(x)= (f_1)'_g(x)f_2(x^*)+f_1(x^*)(f_2)'_g(x)+(f_1)'_g(x)(f_2)'_g(x)\Delta g(x^*).\]  
	\end{enumerate}
	If $f_1$ and $f_2$ are defined on a neighborhood of $x\notin C_g${, then $(1)$ and $(2)$ are satisfied} (with $x^*=x$). 
\end{pro}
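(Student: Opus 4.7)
The plan is to reduce both claims directly to the limit definition of the Stieltjes derivative given in~\eqref{derivada2} (supplemented by~\eqref{derivadaN}), treating separately the three regimes $x\in D_g$, $x\notin D_g\cup C_g$ and $x\in(a_n,b_n)\ss C_g$ that appear in Definition~\ref{derivadadef}.

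For part (1), in each of the three cases the incremental quotient for $\l_1f_1+\l_2f_2$ at $x$ splits as
\[
\l_1\,\frac{f_1(y)-f_1(x^*)}{g(y)-g(x^*)}+\l_2\,\frac{f_2(y)-f_2(x^*)}{g(y)-g(x^*)},
\]
and taking the corresponding one-sided, two-sided, or right-sided-at-$b_n$ limit produces the desired formula by linearity of limits, using that both $(f_j)'_g(x)$ exist by hypothesis.

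For part (2), my starting point is the algebraic identity
\[
(f_1f_2)(y)-(f_1f_2)(x^*)=(f_1(y)-f_1(x^*))\,f_2(x^*)+f_1(x^*)(f_2(y)-f_2(x^*))+(f_1(y)-f_1(x^*))(f_2(y)-f_2(x^*)).
\]
Dividing by $g(y)-g(x^*)$, the first two terms converge to $(f_1)'_g(x)f_2(x^*)$ and $f_1(x^*)(f_2)'_g(x)$. For the cross-term I would use the factorization
\[
\frac{(f_1(y)-f_1(x^*))(f_2(y)-f_2(x^*))}{g(y)-g(x^*)}=\frac{f_1(y)-f_1(x^*)}{g(y)-g(x^*)}\cdot\frac{f_2(y)-f_2(x^*)}{g(y)-g(x^*)}\cdot(g(y)-g(x^*)),
\]
whose first two factors tend to the finite values $(f_1)'_g(x)$ and $(f_2)'_g(x)$, so that everything reduces to computing $\lim(g(y)-g(x^*))$. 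In the case $x\in D_g$, $x^*=x$ and the limit is $y\to x^+$, so by definition of $g(x^+)$ the factor tends to $\Delta g(x)=\Delta g(x^*)$. In the case $x\in(a_n,b_n)\ss C_g$, $x^*=b_n$ and the limit is $y\to b_n^+$, and the same argument yields $\Delta g(b_n)=\Delta g(x^*)$ (which vanishes when $b_n\notin D_g$ and is positive otherwise). In the remaining case $x\notin D_g\cup C_g$, $g$ is continuous at $x$, so $g(y)-g(x)\to 0$ and the cross-term vanishes, matching the formula since $\Delta g(x^*)=0$. The one-sided variant~\eqref{derivadaN} at points of $N_g$ is handled identically by restricting to the appropriate side.

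The main obstacle is not analytic but bookkeeping: ensuring that the correct one-sided or two-sided limit from Definition~\ref{derivadadef} is used in every regime, and verifying that $g(y)-g(x^*)$ always converges to exactly $\Delta g(x^*)$, so that the cross-term uniformly yields the correction $(f_1)'_g(x)(f_2)'_g(x)\Delta g(x^*)$. The concluding statement, for $f_1,f_2$ defined only on a neighborhood of $x\notin C_g$, follows by the very same computation since only quotients involving points of that neighborhood intervene.
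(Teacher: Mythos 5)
Your proposal is correct and follows exactly the route the paper indicates: the paper does not write out the argument but defers to \cite[Proposition 3.9]{Onfirst} and \cite[Proposition 3.13]{TesisIgnacio}, remarking only that ``the proof is reduced to computing the limit~\eqref{derivada2} in each case,'' which is precisely the case-by-case limit computation you carry out. Your treatment of the cross-term, with $g(y)-g(x^*)\to\Delta g(x^*)$ in each of the three regimes, correctly accounts for the extra summand $(f_1)'_g(x)(f_2)'_g(x)\Delta g(x^*)$.
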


Proofs of Proposition~\ref{opder} can be found in \cite[Proposition 3.9]{Onfirst} and \cite[Proposition 3.13]{TesisIgnacio}. The proof is reduced to computing the limit~\eqref{derivada2} in each case. Suppose $f_1$ and $f_2$ are $g$-continuous on $[a,b]$ then, following Proposition~\ref{continua}, we obtain 
\[  f(x)=f(x^*) \text{ for } x\in[a,b],\]  
simplifying point 2 in Proposition~\ref{opder}. Note that $\Delta g$ may not be left-continuous and hence not $g$-continuous --cf. \cite[Proposition 3.1]{Fernandez2022}. 

In the classical case, if a function $f$ is differentiable at a point $x\in\mathbb R$, then it is continuous at that point. As seen in \cite[Remark 3.3]{Onfirst}, in the case of the Stieltjes derivative, the g-differentiability of a function only guarantees the $g$-continuity at points $x\notin N_g \cup C_g \cup D_g$.

\begin{dfn}
	\label{cinfinito}
	Fix $a,b\in\mathbb{R}$ such that $a<b$, $a\notin N^{-}_g$ and $b\notin D_g\cup N^{+}_g\cup C_g$. We say $f:[a,b]\to\mathbb{F}$ belongs to $\mathcal{C}^1_g([a,b],\mathbb{F})$ if the followings conditions are met
	\begin{enumerate}[noitemsep, itemsep=.1cm]
		\item[~1.] $f\in\mathcal{C}_g([a,b],\mathbb{F})$, 
		\item[~2.] $\exists f'_g(x)$ for all $x\in[a,b]$ and $f'_g\in\mathcal{C}_g([a,b],\mathbb{F}).$ 
	\end{enumerate}
	Given $k\in\mathbb N,\,k>1$, we say $f:[a,b]\to\mathbb{F}$ belongs to $\mathcal{C}^k_g([a,b],\mathbb{F})$ if, recursively,
	\begin{enumerate}[noitemsep, itemsep=.1cm]
		\item[~1.] $f\in\mathcal{C}_g([a,b],\mathbb{F})$, 
		\item[~2.] $\exists f'_g(x)$ for all $x\in[a,b]$ and $f'_g\in\mathcal{C}^{k-1}_g([a,b],\mathbb{F}).$ 
	\end{enumerate}
	We define
	\[  \mathcal{C}^{\infty}_g([a,b],\mathbb{F}):=\bigcap_{k\in\mathbb N}\mathcal{C}^k_g([a,b],\mathbb{F}).\]  
	For $k\in\mathbb N\cup\{\infty\}$, we also define 
	\[  \mathcal{BC}^{k}_g([a,b],\mathbb{F}):=\{f\in\mathcal{C}^k_g([a,b],\mathbb{F})\ |\ f^{(n)}_g\in\mathcal{BC}_g([a,b],\mathbb{F}), \forall n=0,\dots,k\}.\]  
	Analogously, we define the same sets for $\Omega\subset\mathbb{F}$ an open {set} on the usual topology satisfying~\eqref{2222221354324}.
\end{dfn}
Note that, thanks to Proposition~\ref{opder}, all of the above are vector spaces.

\subsection{Lebesgue-Stieltjes Integral}

Throughout the paper we will work with Lebesgue-Stieltjes integrals. The usual way of constructing {Lebesgue–Stieltjes measures} through a non-decreasing map is applying Caratheodory's extension theorem. Here we present the theorem directly applied to derivators. For a full statement and proof see \cite[Theorems~1.3.2 to 1.3.6]{Athreya}. For more details in the derivator's case see \cite[Example~1.46]{TesisIgnacio}. We will denote by $\mathcal{B}$ the Borel $\sigma$-algebra relative to $\tau_u$, the usual topology of the real line.
\begin{thm}[Caratheodory's extension theorem]
Let $g:\mathbb{R}\to\mathbb{R}$ be a derivator and \\$\mu_g^*:\mathcal{P}(\mathbb{R})\to [0,+\infty]$ given by
\[  \mu_g^*(A)=\inf\left\{ \sum_{n\in\mathbb N}(g(b_n)-g(a_n))\,:\, A\subset \bigcup_{n\in\mathbb N}[a_n,b_n),\, \{[a_n,b_n)\}_{n\in\mathbb N}\subset\mathcal{C}\right\}\]  
where
\[  \mathcal{C}=\{[a,b)\,:\, a,b\in\mathbb{R},\, a<b \}.\]  
Then $\mu_g^*$ is an outer measure, the set 
\[  \mathcal{M}_g=\{ A\in\mathcal{P}(\mathbb{R})\,\ |\ \, \mu_g^*(E)=\mu_g^*(E\cap A)+\mu_g^*(E-A)),\,\,\forall E\in\mathcal{P}(\mathbb{R}) \}\]  
is a $\sigma$-algebra and the restriction $\mu_g=\mu_g^*|_{\mathcal{M}_g}$ is a measure on $\mathcal{M}_g$. In particular, $\mathcal{C}\subset\mathcal{M}_g$ and $\mathcal{B}\subset\mathcal{M}_g$. If $\mu'$ is a measure on $\mathcal{B}$ such that $\mu'=\mu_g$ on $\mathcal{C}$, then $\mu'=\mu_g$ on $\mathcal{B}$. 
\end{thm}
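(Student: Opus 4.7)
The plan is to follow the classical Caratheodory scheme adapted to the semi-ring $\mathcal{C}$, treating the derivator $g$ as furnishing a premeasure. First I would define $\mu_0:\mathcal{C}\to[0,+\infty]$ by $\mu_0([a,b)):=g(b)-g(a)$ and verify that $\mu_0$ is a premeasure, i.e.\ countably additive on $\mathcal{C}$ whenever the disjoint union happens to lie in $\mathcal{C}$. Finite additivity is immediate by telescoping once one notes that any finite partition $[a,b)=\bigsqcup_{i=1}^{N}[a_i,b_i)$ can be re-indexed so that $a=a_1<b_1=a_2<\dots<b_N=b$.

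The step where the hypotheses on $g$ genuinely bite is the countable additivity of $\mu_0$. Suppose $[a,b)=\bigsqcup_{n\in\mathbb{N}}[a_n,b_n)$. The inequality $g(b)-g(a)\ge\sum_n (g(b_n)-g(a_n))$ follows from finite additivity and monotonicity of $g$ by truncating the sum. The reverse inequality is the delicate one and is precisely where the left-continuity of $g$ is used. Given $\varepsilon>0$, by left-continuity at $b$ one picks $\delta>0$ with $g(b)-g(b-\delta)<\varepsilon$, and by left-continuity at each $a_n$ one picks $\eta_n>0$ with $g(a_n)-g(a_n-\eta_n)<\varepsilon/2^n$. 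The compact interval $[a,b-\delta]$ is then covered by the open intervals $(a_n-\eta_n,b_n)$; compactness extracts a finite subcover, and a standard interval-overlap estimate yields
\[
g(b-\delta)-g(a)\le\sum_{n}\bigl(g(b_n)-g(a_n-\eta_n)\bigr)\le\sum_{n}\bigl(g(b_n)-g(a_n)\bigr)+\varepsilon,
\]
whence $g(b)-g(a)\le\sum_n(g(b_n)-g(a_n))+2\varepsilon$, and letting $\varepsilon\to 0^+$ closes the argument. I expect this step to be the main obstacle, because every other step of the proof is insensitive to the specific nature of $g$.

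Once $\mu_0$ is known to be a premeasure, the rest is the standard Caratheodory machinery. The outer measure $\mu_g^*$ defined in the statement is exactly the outer measure induced by $\mu_0$, and verifying that it satisfies $\mu_g^*(\emptyset)=0$, monotonicity, and countable subadditivity is routine from the definition. The Caratheodory splitting condition then makes $\mathcal{M}_g$ into a $\sigma$-algebra on which $\mu_g:=\mu_g^*|_{\mathcal{M}_g}$ is a (complete) measure. That every $[a,b)\in\mathcal{C}$ satisfies the Caratheodory condition is checked by approximating an arbitrary test set $E$ by coverings from $\mathcal{C}$ and splitting each covering interval across the endpoints $a$ and $b$ using finite additivity of $\mu_0$. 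Since the half-open intervals in $\mathcal{C}$ generate the Borel $\sigma$-algebra $\mathcal{B}$ of the usual topology on $\mathbb{R}$, one obtains $\mathcal{B}\subset\mathcal{M}_g$.

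Finally, uniqueness on $\mathcal{B}$ follows from Dynkin's $\pi$-$\lambda$ theorem: $\mathcal{C}$ is a $\pi$-system generating $\mathcal{B}$, and the decomposition $\mathbb{R}=\bigcup_{n\in\mathbb{N}}[-n,n)$ with $\mu_g([-n,n))=g(n)-g(-n)<+\infty$ shows that $\mu_g$ is $\sigma$-finite. Consequently, if $\mu'$ is any measure on $\mathcal{B}$ agreeing with $\mu_g$ on $\mathcal{C}$, the class $\{A\in\mathcal{B}: \mu'(A\cap[-n,n))=\mu_g(A\cap[-n,n))\}$ is a $\lambda$-system containing $\mathcal{C}$ for each $n$, hence equals $\mathcal{B}$, and taking $n\to\infty$ yields $\mu'=\mu_g$ on all of $\mathcal{B}$.
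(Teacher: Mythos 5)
Your proposal is correct and is the standard Caratheodory/Lebesgue--Stieltjes construction; the paper itself offers no proof of Theorem~\ref{carat}, deferring instead to \cite[Theorems~1.3.2 to 1.3.6]{Athreya} and \cite[Example 1.46]{TesisIgnacio}, and your argument is essentially the one carried out there. You correctly isolate the only point where the hypotheses on the derivator genuinely enter, namely the use of left-continuity of $g$ (at $b$ and at each $a_n$) in the countable-additivity estimate for the premeasure on half-open intervals $[a,b)$; the remaining steps (outer measure, the Caratheodory $\sigma$-algebra, $\mathcal{C}\subset\mathcal{M}_g$, generation of $\mathcal{B}$, and uniqueness via the $\pi$-$\lambda$ theorem with the $\sigma$-finite exhaustion $\mathbb{R}=\bigcup_n[-n,n)$) are routine, as you say.
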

Considering $g=\operatorname{Id}$, we recover the classical construction of the Lebesgue measure. Furthermore, \[  \mu_g([a,b))=g(b)-g(a),\]  for all $a,b\in\mathbb{R}$ such that $a<b$, for more details see \cite[Example 1.46]{TesisIgnacio}. For any $x\in\mathbb{R}$, we have \[  \mu_g(\{x\})=\Delta g(x).\]  

\begin{dfn}
Let $X\in\mathcal{M}_g$ and consider the measure space $(X,{\mathcal{M}_g}|_{X},\mu_g)$. Given any function $f:X\to\mathbb{F}$ we say it is: 
\begin{enumerate}[noitemsep, itemsep=.1cm]
	\item[~1.] \emph{$g$-measurable}, if $f^{-1}(U)\in\mathcal{M}_g$, for all $U\in \mathcal{B}$.
	\item[~2.] \emph{$g$-integrable} or $f\in\mathcal{L}^1_{\mu_g}(X,\mathbb{F})$, if it is $g$-measurable and
	\[  \int_X \left\lvert f\right\rvert\operatorname{d}\mu_g<\infty.\]  
\end{enumerate}
\end{dfn}

\begin{pro}
Let $X\in\mathcal{B}$. If $f:X\to\mathbb{F}$ is $g$-continuous on $X$, then $f^{-1}$ takes Borel sets onto Borel sets, in particular, $f$ is $g$-measurable.
\end{pro}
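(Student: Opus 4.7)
The plan is to unpack the definition of $g$-continuity in topological terms and then reduce everything to the fact that the derivator $g$ is itself Borel measurable in the usual sense. The target statement has two parts: first, that $f^{-1}$ sends every Borel set of $\mathbb F$ (with its usual topology) into a Borel set of $X$ (with the Borel $\sigma$-algebra inherited from $\tau_u$); second, that this implies $g$-measurability. The second part is immediate from Theorem~\ref{carat}, since it states that $\mathcal B\subset\mathcal M_g$, so I will focus on the first.

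The first step is to observe that, by definition, $g$-continuity of $f$ is exactly continuity from $(X,\tau_g|_X)$ to $(\mathbb F,\tau_u)$. Hence, for every $U\in\tau_u$ of $\mathbb F$, the preimage $f^{-1}(U)$ lies in $\tau_g|_X$. The next step, which is the crux of the argument, is to show that every element of $\tau_g$ is Borel in the usual topology of $\mathbb R$. By Proposition~\ref{numerable}, it is enough to verify this for the countable base $\{B_g(a,r)\,:\,a,r\in\mathbb Q\}$. For any such ball, one has the identity
\[
B_g(a,r)=\{y\in\mathbb R\,:\,|g(y)-g(a)|<r\}=g^{-1}\bigl((g(a)-r,\,g(a)+r)\bigr),
\]
so the problem reduces to showing that $g$ itself is Borel measurable (with respect to the usual topologies on domain and codomain). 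This is a classical fact for monotone functions: for any $\alpha\in\mathbb R$, the set $g^{-1}((-\infty,\alpha))$ is of the form $(-\infty,s)$ or $(-\infty,s]$ for some $s\in\mathbb R\cup\{\pm\infty\}$, depending on whether the level is attained and on the left-continuity of $g$; in either case it is Borel. Consequently each $B_g(a,r)$ is Borel, and since $\tau_g$ is second countable every $\tau_g$-open set is a countable union of such balls and therefore Borel.

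Combining the two observations, $f^{-1}(U)$ is a Borel subset of $X$ for every $U\in\tau_u$ of $\mathbb F$. To promote this from opens to all Borel sets of $\mathbb F$, I invoke the standard good-sets principle: the family
\[
\mathcal G=\bigl\{V\subset\mathbb F\,:\,f^{-1}(V)\in\mathcal B|_X\bigr\}
\]
is a $\sigma$-algebra (preimages commute with complements and countable unions) and, by what we just proved, it contains $\tau_u$. Hence $\mathcal G\supset\mathcal B(\mathbb F)$, which is the first conclusion. The second conclusion follows since $\mathcal B|_X\subset\mathcal M_g$ by Theorem~\ref{carat}.

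The main obstacle is really the single lemma that basic $\tau_g$-balls are Borel in the usual topology; once one recognizes that $B_g(a,r)=g^{-1}((g(a)-r,g(a)+r))$ and uses monotonicity and left-continuity of $g$, everything else is routine manipulation of $\sigma$-algebras, and the second countability of $\tau_g$ from Proposition~\ref{numerable} is what makes the reduction from arbitrary $\tau_g$-opens to basic balls work without running into uncountable-union issues.
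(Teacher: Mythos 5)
Your proof is correct and follows essentially the same route as the paper: reduce to $\tau_u$-opens, use the countable base of $\tau_g$ from Proposition~\ref{numerable} to write $f^{-1}(U)$ as a countable union of balls, check that balls are Borel, and close up with the generated-$\sigma$-algebra argument. The only cosmetic difference is that you show balls are Borel by writing $B_g(a,r)=g^{-1}((g(a)-r,g(a)+r))$ and invoking Borel measurability of the monotone map $g$, whereas the paper simply observes that monotonicity makes each ball an interval.
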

\begin{proof}
Let $U\in\tau_u$ be an open subset of $\mathbb{F}$. Since $f$ is $g$-continuous, we have that $f^{-1}(U)$ is an open set of $\tau_g$ intersected with $X$. From Proposition~\ref{numerable} we know that $f^{-1}(U)$ is a countable union of balls intersected with $X$. Balls are intervals and hence Borel sets, so we have that $f^{-1}(U)\in\mathcal{B}$. Since $\mathcal{B}$ is the smallest $\sigma$-algebra that contains $\tau_u$, we have the result.
\end{proof}

Note that if a function $f:X\to\mathbb{F}$ is such that $f^{-1}$ takes Borel sets onto Borel sets then it is $g$-measurable for any given derivator $g$.

\begin{thm}[Fundamental Theorem of Calculus for the Lebesgue–Stieltjes integral]
Let $a,b\in\mathbb{R}$ such that $a<b$ and $w:[a,b]\to\mathbb{F}$. Then the following concepts are equivalent:
\begin{enumerate}[noitemsep, itemsep=.1cm]
\item[~1.] The map $w$ is \emph{$g$-absolutely continuous}, that is, for each $\varepsilon >0$, there exists some $\delta>0$ such that, for any family $\{(a_n,b_n)\}^{m}_{n=1}$ of pairwise disjoint open subintervals of $[a,b]$, \[  \sum_{n=1}^m(g(b_n)-g(a_n))<\delta \Rightarrow \sum_{n=1}^m \left\lvert w(b_n)-w(a_n)\right\rvert<\varepsilon.\]  
We denote this as $w\in\mathcal{AC}_g([a,b],\mathbb{F})$.
\item[~2.] The map $w$ satisfies the following properties:
\begin{enumerate}[noitemsep, itemsep=.1cm]
	\item[~(a)] There exists $w'_g(x)$ for all $x\in[a,b)$, except on a $g$-measurable set of null $\mu_g$-measure.
	\item[~(b)] $w'_g\in\mathcal{L}_g^1([a,b),\mathbb{F})$.
	\item[~(c)] For all $x\in[a,b]$, \[  w(x)-w(a)=\int_{[a,x)}w'_g\operatorname{d}\mu_g.\]  
\end{enumerate}
\end{enumerate}
\end{thm}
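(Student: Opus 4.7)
The plan is to prove the two implications separately, relying on the measure-theoretic machinery built up in the previous subsection. For $(2) \Rightarrow (1)$, I would use the standard absolute continuity of the Lebesgue–Stieltjes integral: given $\varepsilon > 0$, the hypothesis $w'_g \in \mathcal{L}^1_{\mu_g}([a,b),\mathbb{F})$ furnishes $\delta > 0$ such that $\mu_g(A) < \delta$ implies $\int_A |w'_g| \, \mathrm{d}\mu_g < \varepsilon$. For any pairwise disjoint open intervals $(a_n,b_n) \subset [a,b]$ with $\sum_{n=1}^m (g(b_n) - g(a_n)) < \delta$, I would replace them by the half-open sets $[a_n,b_n)$, whose $\mu_g$-measures are bounded by $g(b_n)-g(a_n)$, so that $A = \bigcup_n [a_n,b_n)$ satisfies $\mu_g(A) < \delta$. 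The identity in (c) then yields $|w(b_n) - w(a_n)| = |\int_{[a_n,b_n)} w'_g \, \mathrm{d}\mu_g| \le \int_{[a_n,b_n)} |w'_g| \, \mathrm{d}\mu_g$, and summing gives the required bound $\varepsilon$.

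For $(1) \Rightarrow (2)$, I would associate to $w$ a (complex or signed) Borel measure on $[a,b)$ via $\nu_w([a,x)) := w(x) - w(a)$, using $g$-absolute continuity to show that $w$ has bounded $g$-variation (so that $\nu_w$ is well defined and finite) and that $\nu_w \ll \mu_g$: indeed, a $\mu_g$-null set can be covered by half-open intervals of arbitrarily small total $g$-length, and $g$-absolute continuity forces the corresponding sum of increments of $w$ to be arbitrarily small. The Radon–Nikodym theorem then provides a density $h \in \mathcal{L}^1_{\mu_g}([a,b),\mathbb{F})$ with $w(x) - w(a) = \int_{[a,x)} h \, \mathrm{d}\mu_g$ for every $x \in [a,b]$, which is the integral representation of property (c).

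The main obstacle is then identifying $h$ with $w'_g$ $\mu_g$-a.e., so that properties (a) and (b) also follow. I would split the analysis according to the three cases in~\eqref{derivada2}. On jump points $x \in D_g$ we have $\mu_g(\{x\}) = \Delta g(x) > 0$, and direct evaluation of the Radon–Nikodym identity on the singleton gives $h(x) = (w(x^+) - w(x))/\Delta g(x) = w'_g(x)$. On a connected component $(a_n,b_n) \subset C_g$ the measure $\mu_g$ assigns no mass to open subintervals, so the integral identity forces $w$ to be constant on $(a_n,b_n)$, and then $w'_g(x)$ at any $x \in (a_n,b_n)$ is prescribed by the limit at $b_n^+$, which coincides with the value of $h$ at $b_n$ up to a $\mu_g$-null set. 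The delicate case is $x \notin D_g \cup C_g$: here one has to invoke a Lebesgue-type differentiation theorem for Radon measures on $\mathbb{R}$ to get that the average $(1/\mu_g([x,y)))\int_{[x,y)} h \, \mathrm{d}\mu_g$ tends to $h(x)$ as $y \to x^+$ for $\mu_g$-a.e.\ $x$, and to combine this with an analogous statement for one-sided limits to cover the endpoints in $N_g^{\pm}$ via~\eqref{derivadaN}. This step — transporting classical Vitali/Besicovitch differentiation to the pseudometric $d_g$, or equivalently pushing forward by $g$ to reduce to the Lebesgue case on $g(\mathbb{R})$ — is the technical heart of the argument, and is where I would expect to lean most heavily on the cited references \cite{PousoRodriguez,TesisIgnacio}.
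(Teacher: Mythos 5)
You should first note that the paper does not prove this theorem at all: it is imported from the literature, with the surrounding text explicitly deferring to \cite[Theorem 5.4]{PousoRodriguez} for a proof (and to \cite{RodrigoTheory,TesisIgnacio} for more general statements), so there is no in-paper argument to compare against; your outline is essentially the standard proof from those references. The direction $(2)\Rightarrow(1)$ is fine as written: it is the uniform absolute continuity of the integral of an $\mathcal{L}^1_{\mu_g}$ function combined with $\mu_g([a_n,b_n))=g(b_n)-g(a_n)$. For $(1)\Rightarrow(2)$ your plan (build $\nu_w$, show $\nu_w\ll\mu_g$, apply Radon--Nikodym, identify the density with $w'_g$) is the classical route, but two of its steps carry essentially all the weight and are only asserted. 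First, that $g$-absolute continuity implies bounded variation --- needed for $\nu_w$ to be a finite signed or complex measure --- is not immediate when $g$ has jumps of size at least $\delta$: one cannot then partition $[a,b]$ into pieces of small $g$-increment, and the finitely many large jumps have to be split off and handled separately. Second, the identification of the Radon--Nikodym density $h$ with $w'_g$ $\mu_g$-a.e.\ is exactly the content of the paper's Theorem~\ref{derintstieljtes} (\cite[Theorem 2.4]{PousoRodriguez}); once you have the representation $w(x)-w(a)=\int_{[a,x)}h\operatorname{d}\mu_g$ you can invoke that result directly and drop your entire case analysis --- note in particular that $C_g$ and $N_g$ are $\mu_g$-null, so property (a) imposes nothing at those points and your discussion of $(a_n,b_n)\subset C_g$ and of $N_g^{\pm}$ is unnecessary. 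As it stands, your proposal is a faithful skeleton of the standard argument with the two genuinely hard steps outsourced to differentiation theory for Radon measures, which is no worse than what the paper itself does, since it outsources the whole theorem.
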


A more general result can be seen in \cite[Theorem 2.71]{TesisIgnacio}. This same statement appears on \cite[Theorem 5.1]{RodrigoTheory}. For a proof, see \cite[Theorem 5.4]{PousoRodriguez}.

Using the $g$-integrability of $g$-continuous functions, we obtain the following result.
\begin{pro}
\label{primitiva}
Let $a,b\in\mathbb{R}$ be such that $a<b$ and $f\in\mathcal{BC}_g([a,b),\mathbb{F})$. Then $f$ is $g$-integrable on $[a,b)$ and the map
\[  F: x\in[a,b]\to F(x)=\int_{[a,x)}f\operatorname{d}\mu_g\]  
is $g$-continuous and bounded on $[a,b]$.
\end{pro}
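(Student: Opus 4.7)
The plan is to dispatch $g$-integrability and the boundedness of $F$ in one stroke, and then to establish $g$-continuity via a Lipschitz estimate with respect to the pseudometric $d_g$. Setting $M:=\sup_{[a,b)}|f|<\infty$ (finite because $f\in\mathcal{BC}_g([a,b),\mathbb{F})$), the preceding proposition guarantees $g$-measurability of $f$, and
\[
\int_{[a,b)}|f|\,\operatorname{d}\mu_g\le M\,\mu_g([a,b))=M(g(b)-g(a))<\infty,
\]
so that $f\in\mathcal{L}^1_{\mu_g}([a,b),\mathbb{F})$ and $F(x)$ is well defined for every $x\in[a,b]$. The same estimate applied on $[a,x)$ yields $|F(x)|\le M(g(b)-g(a))$, which already gives the boundedness claim.

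For $g$-continuity I would exploit additivity of the Lebesgue--Stieltjes integral on disjoint unions. For $x,y\in[a,b]$ with $x\le y$, the decomposition $[a,y)=[a,x)\cup[x,y)$ yields
\[
F(y)-F(x)=\int_{[x,y)}f\,\operatorname{d}\mu_g,
\]
and hence $|F(y)-F(x)|\le M\,\mu_g([x,y))=M(g(y)-g(x))=M\,d_g(x,y)$; the case $y\le x$ is symmetric. Thus $F$ is $M$-Lipschitz with respect to $d_g$, so for any $x\in[a,b]$ and $\varepsilon>0$ the choice $\delta:=\varepsilon/(M+1)$ forces $|F(y)-F(x)|<\varepsilon$ whenever $y\in B_g(x,\delta)\cap[a,b]$. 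This argument is uniform in $x$ and in particular covers the right endpoint, so no separate treatment of $x=b$ is required.

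I do not anticipate any genuine obstacle; the whole argument reduces to the finiteness of $\mu_g([a,b))=g(b)-g(a)$ combined with the uniform bound $|f|\le M$. The one conceptual point worth stressing is that the Lipschitz estimate is measured by $d_g$ and not by the Euclidean distance, so $F$ need not be right-continuous at points $x\in D_g$ --- it will in fact pick up an atom of size $f(x)\Delta g(x)$ there --- yet this is perfectly compatible with $g$-continuity, since right neighbours across such a jump are already far apart in $d_g$.
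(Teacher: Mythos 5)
Your argument is correct: the paper states Proposition~\ref{primitiva} without proof, and your reasoning (measurability from the preceding proposition, the bound $\int_{[a,x)}|f|\,\operatorname{d}\mu_g\le M(g(b)-g(a))$, and the $d_g$-Lipschitz estimate $|F(y)-F(x)|\le M\,d_g(x,y)$ giving $g$-continuity uniformly, including at $b$) is exactly the standard route one would take. Your closing remark correctly identifies the one genuinely $g$-specific point, namely that $F$ acquires an atom $f(x)\Delta g(x)$ at $x\in D_g$ yet remains $g$-continuous because $d_g$ already separates $x$ from its right neighbours.
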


\begin{thm}[{\cite[Theorem 2.4]{PousoRodriguez}}]
\label{derintstieljtes}
Let $a,b\in\mathbb{R}$ be such that $a<b$ and $f\in\mathcal{L}^1_{\mu_g}([a,b),\mathbb{F})$. Consider the function $F:[a,b]\to \mathbb{F}$ given by
\[  F: x\in[a,b]\to F(x)=\int_{[a,x)}f\operatorname{d}\mu_g.\]  
Then, there exists $N\subset[a,b)$ such that $\mu_g(N)=0$ and $F'_g(x)=f(x)$ for all $x\in[a,b)-N$.
\end{thm}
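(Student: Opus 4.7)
The plan is to decompose $[a,b)$ according to the structure of $g$ and treat each piece separately. The exceptional set $N$ will absorb $C_g\cap[a,b)$ and $N_g\cap[a,b)$: each connected component of $C_g$ has $\mu_g$-measure zero by construction, and $N_g$ is countable and disjoint from $D_g$, hence also $\mu_g$-negligible. It then remains to prove $F'_g(x)=f(x)$ pointwise on $D_g\cap[a,b)$ and $\mu_g$-a.e.\ on the generic set $G=[a,b)\setminus(D_g\cup C_g\cup N_g)$.

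For $x\in D_g\cap[a,b)$, dominated convergence applied to $f\cdot\mathbf 1_{[x,y)}$ as $y\downarrow x$ yields
\[F(x^+)-F(x)=\int_{\{x\}}f\,\operatorname{d}\mu_g=f(x)\Delta g(x),\]
so $F'_g(x)=f(x)$ at every such point, with no further exceptional set. The substantive step is the case $x\in G$, where we must show
\[\lim_{y\to x}\frac{1}{g(y)-g(x)}\int_{[x,y)}f\,\operatorname{d}\mu_g=f(x)\qquad\text{for }\mu_g\text{-a.e.\ }x\in G.\]
I would follow the standard three-step scheme. Step (i): if $f$ is $g$-continuous, pick $\delta>0$ so that $|f(z)-f(x)|<\varepsilon$ on $B_g(x,\delta)$; then the quotient is sandwiched between $f(x)\pm\varepsilon$ whenever $|g(y)-g(x)|<\delta$. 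Step (ii): establish a weak-$(1,1)$ maximal inequality for the centered maximal function $M_gf(x)=\sup_{r>0}\mu_g(B_g(x,r))^{-1}\int_{B_g(x,r)}|f|\,\operatorname{d}\mu_g$ via a Vitali-type covering argument in $(\mathbb R,d_g)$. Step (iii): writing $f=\phi+h$ with $\phi$ $g$-continuous (such functions are dense in $\mathcal L^1_{\mu_g}$) and $\|h\|_{\mathcal L^1_{\mu_g}}<\varepsilon$, the set where the limsup of the quotient deviates from $f(x)$ by more than $\lambda$ is contained, modulo a null set, in $\{M_g h>\lambda/2\}\cup\{|h|>\lambda/2\}$, of $\mu_g$-measure $O(\varepsilon/\lambda)$; letting $\varepsilon\to 0$ concludes.

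The main obstacle is step (ii), since $g$-balls can degenerate near $D_g$ and $C_g$. A clean way to finesse this is to exploit the monotonicity of $g$: on $G$ the map $g$ is continuous and strictly increasing, so $g$-balls correspond to genuine Euclidean intervals in the image $g(G)$, and the Vitali covering lemma reduces to its classical counterpart on $\mathbb R$. Equivalently, one may invoke the general Lebesgue differentiation theorem for Radon measures on $(\mathbb R,d_g)$, which is available thanks to the doubling-type control granted by the structure of $g$ along $G$. Either route yields the $\mu_g$-a.e.\ identity on $G$; combined with the exact pointwise identity on $D_g$ and the absorption of $C_g\cup N_g$ into $N$, the theorem follows.
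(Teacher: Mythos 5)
This statement is one of the paper's preliminaries: it is imported verbatim from \cite[Theorem 2.4]{PousoRodriguez} and the paper gives no proof of it, so there is no internal argument to compare yours against. What you have written is a sketch of the standard proof of that external result, and its skeleton is sound: $\mu_g(C_g)=\mu_g(N_g)=0$ (each component $(a_n,b_n)$ of $C_g$ has $\mu_g$-measure zero because $g$ is constant there and left-continuous, and $N_g$ is a countable set of non-atoms), the computation at $x\in D_g$ via dominated convergence is exact and complete, and on the generic set $G$ the quotient $\bigl(F(y)-F(x)\bigr)/\bigl(g(y)-g(x)\bigr)$ is precisely the $\mu_g$-average of $f$ over $[x,y)$ or $[y,x)$, so the problem does reduce to a differentiation-of-measures statement.

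The soft spots are all in step (ii) and its surroundings. First, the ``doubling-type control'' you invoke is simply false in general: $\mu_g$ has atoms at $D_g$ and vanishes on $C_g$, and no structure of $g$ ``along $G$'' makes it doubling, since $d_g$-balls centered in $G$ still swallow nearby atoms and constancy intervals. This is not fatal --- on $\mathbb R$ the Besicovitch/Vitali covering argument for intervals containing their reference point gives the weak $(1,1)$ bound with no doubling hypothesis --- but the justification as written does not stand. Second, the reduction ``$g$-balls correspond to genuine Euclidean intervals in $g(G)$'' glosses over the fact that $g(\mathbb R)$ has gaps at every jump and that $B_g(x,r)$ for $x\in G$ is an interval of $\mathbb R$ meeting $D_g\cup C_g$; the clean version of this idea goes through the generalized inverse of $g$ and the identity $\mu_g([x,y))=g(y)-g(x)$, and needs to be carried out, not just named. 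Third, the density step needs a word: usual continuous functions are dense in $\mathcal L^1_{\mu_g}$ by Lusin, and at a point $x\in G$ one has $g(x-\eta)<g(x)<g(x+\eta)$ for every $\eta>0$ (otherwise $x$ would lie in $C_g\cup N_g$), which is exactly what converts usual continuity of the approximant at $x$ into $g$-continuity there and makes your step (i) run. With those three points repaired the argument closes; as a blind reconstruction of a quoted preliminary it is a reasonable but incomplete sketch rather than a proof.
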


\begin{pro}
\label{primder}
Let $a,b\in\mathbb{R}$ be such that $a<b$ and $f\in\mathcal{BC}_g([a,b],\mathbb{F})$. Then the map
\[  F: x\in[a,b]\to F(x)=\int_{[a,x)}f\operatorname{d}\mu_g\]  
satisfies $F'_g(x)=f(x)$ for all $x\in(a,b)-C_g$. Besides, if $a\notin N^{-}_g$ and $b\notin D_g\cup N^{+}_g\cup C_g$, then $F'_g(x)=f(x)$ for all $x\in[a,b]$ and $F\in\mathcal{BC}^1_g([a,b],\mathbb{F})$.
\end{pro}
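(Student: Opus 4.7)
The plan is to compute $F'_g(x)$ directly from Definition~\ref{derivadadef}, combining the result with Proposition~\ref{primitiva}, which already yields $F\in\mathcal{BC}_g([a,b],\mathbb{F})$. I would organize the argument by the three cases of Definition~\ref{derivadadef}: $x\in D_g$, $x\notin D_g\cup C_g$, and $x\in C_g$ (the last only entering for the second part of the statement).

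At a point $x\in D_g\cap(a,b)$ I would split
\[ F(y)-F(x)=\int_{[x,y)} f\,\operatorname{d}\mu_g = f(x)\Delta g(x)+\int_{(x,y)} f\,\operatorname{d}\mu_g \]
using $\mu_g(\{x\})=\Delta g(x)$. Since $f$ is bounded, the remainder integral is dominated by $\|f\|_\infty\,(g(y)-g(x)-\Delta g(x))\to 0$ as $y\to x^+$, while $g(y)-g(x)\to\Delta g(x)>0$; hence the required right-limit equals $f(x)$.

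For $x\in(a,b)\setminus(D_g\cup C_g)$ I would use the $g$-continuity of $f$ at $x$: given $\varepsilon>0$ there is $\delta>0$ with $|f-f(x)|<\varepsilon$ on $B_g(x,\delta)$. Because $x\notin D_g$, $g$ is $\tau_u$-continuous at $x$, so some usual-topology neighborhood $(x-\eta,x+\eta)$ sits inside $B_g(x,\delta)$. For such $y$ with $g(y)\neq g(x)$, the monotonicity of $g$ confines the integration interval between $x$ and $y$ inside $B_g(x,\delta)$, which yields
\[ \left|\frac{F(y)-F(x)}{g(y)-g(x)}-f(x)\right|=\frac{1}{|g(y)-g(x)|}\left|\int(f-f(x))\,\operatorname{d}\mu_g\right|\leq\varepsilon. \]
I would then note that $x\notin C_g\cup N_g$ guarantees that $g$ is non-constant on both one-sided neighborhoods of $x$, so a genuine two-sided limit is available; when $x\in N_g^\pm$ the same estimate on the admissible side delivers the one-sided limit selected in~\eqref{derivadaN}.

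For the second assertion, the endpoints $x=a$ and $x=b$ fall into one of the cases above, and the hypotheses $a\notin N^-_g$ and $b\notin D_g\cup N^+_g\cup C_g$ are exactly what make the pertinent one-sided limit admissible. For $x\in(a_n,b_n)\subset C_g\cap(a,b)$, Definition~\ref{derivadadef} redirects the derivative to $b_n$; because $b\notin D_g\cup N^+_g\cup C_g$ forces $b_n<b$, the computation at $b_n$ (which lies in $D_g\cup N^+_g$) reuses one of the earlier cases to give $F'_g(x)=f(b_n)$, and Proposition~\ref{continua} makes $f(b_n)=f(x)$ on the constancy interval. Combining $F'_g=f\in\mathcal{BC}_g([a,b],\mathbb{F})$ with Proposition~\ref{primitiva} delivers $F\in\mathcal{BC}^1_g([a,b],\mathbb{F})$. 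The main bookkeeping obstacle throughout is tracking the $N_g$ and $C_g$ points carefully so that the relevant one-sided limit is always taken along a direction where $g$ is strictly increasing.
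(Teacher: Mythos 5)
Your argument is correct. Note that the paper itself offers no proof of Proposition~\ref{primder}: it only remarks that the result is ``essentially proven'' in \cite[Lemma 3.14]{Onfirst} and that it is close to Theorem~\ref{derintstieljtes} because $\mu_g(C_g)=\mu_g(N_g)=0$. Your direct verification from Definition~\ref{derivadadef} is therefore a self-contained substitute rather than a variant of an in-paper argument, and all the delicate points are handled properly: the splitting $\int_{[x,y)}=\int_{\{x\}}+\int_{(x,y)}$ with $\mu_g((x,y))=g(y)-g(x^+)\to 0$ at jump points; the observation that for $x\notin D_g\cup C_g\cup N_g$ monotonicity forces $g(y)\neq g(x)$ on both sides (so the difference quotient is defined for every nearby $y\neq x$) while for $x\in N_g^{\pm}$ only the side prescribed by~\eqref{derivadaN} is used; the confinement of the integration interval inside $B_g(x,\delta)$ via monotonicity and left-continuity of $g$; and the reduction of the $C_g$ case to the point $b_n$, which the hypothesis $b\notin D_g\cup N_g^+\cup C_g$ places strictly inside $[a,b]$ and which lies in $D_g\cup N_g^+$, together with Proposition~\ref{continua}(3) to identify $f(b_n)$ with $f(x)$. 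The final appeal to Proposition~\ref{primitiva} to upgrade to $F\in\mathcal{BC}_g^1([a,b],\mathbb{F})$ is exactly what is needed. The only cosmetic remark is that the endpoint $a$ may itself lie in a constancy component $(a_n,b_n)$, in which case it is treated by the same redirection to $b_n$ as interior points of $C_g$; your case analysis covers this implicitly but could state it explicitly.
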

Essentially, this result is proven in \cite[Lemma 3.14]{Onfirst}. See how Proposition~\ref{primder} and Theorem~\ref{derintstieljtes} are so closely related, since $\mu_g(C_g)=\mu_g(N_g)=0$.

\begin{pro}[{\cite[Proposition 1.52]{TesisIgnacio}}]
\label{sumamedidas}
Let $g_1,g_2:\mathbb{R}\to\mathbb{R}$ be two derivators. Define ${g:\mathbb{R}\to\mathbb{R}}$ as
\[  g(x)=g_1(x)+g_2(x),\quad x\in\mathbb{R}.\]  
Then $g$ is also a derivator and \[  \mu_g^*(E)=\mu_{g_1}^*(E)+\mu_{g_2}^*(E), \quad E\in\mathcal{P}(\mathbb{R}).\]  In particular, we have that any subset $g_1$ and $g_2$-measurable is $g$-measurable and
\[  \mu_g(E)=\mu_{g_1}(E)+\mu_{g_2}(E), \quad E\in\mathcal{M}_{g_1}\cap\mathcal{M}_{g_2}.\]  
\end{pro}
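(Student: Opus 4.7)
That $g=g_1+g_2$ is itself a derivator is immediate, since sums of left-continuous non-decreasing functions are left-continuous and non-decreasing; the content of the statement lies in the outer measure identity, from which the last two assertions will follow almost formally.

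The inequality $\mu_g^*(E)\ge \mu_{g_1}^*(E)+\mu_{g_2}^*(E)$ is routine: for any covering $\{[a_n,b_n)\}_n\subset \mathcal C$ of $E$, the definition of $g$ gives the termwise identity $g(b_n)-g(a_n)=(g_1(b_n)-g_1(a_n))+(g_2(b_n)-g_2(a_n))$, so summing and bounding each piece below by the corresponding $\mu_{g_i}^*(E)$ and taking infimum over all covers yields the claim.

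The reverse inequality is the main technical point. The first step I would take is a disjointification lemma: any countable cover $\{I_n\}_n\subset\mathcal C$ of a set admits a disjoint countable refinement $\{K_i\}_i\subset\mathcal C$ of the same union, with $\sum_i\mu_g(K_i)\le\sum_n\mu_g(I_n)$ for \emph{every} derivator $g$ simultaneously, obtained by splitting each $I_n\setminus\bigcup_{k<n}I_k$ into its finitely many half-open pieces. Having thus replaced two given almost-optimal covers of $E$ by disjoint ones $\{I_n\}$ (for $\mu_{g_1}^*$, up to $\varepsilon/2$) and $\{J_m\}$ (for $\mu_{g_2}^*$, up to $\varepsilon/2$), the intersections $\{I_n\cap J_m\}_{n,m}$ form a doubly disjoint cover of $E$ by half-open intervals. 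Disjointness in each variable, applied to $\mu_{g_1}$ and $\mu_{g_2}$ respectively, yields $\sum_m\mu_{g_1}(I_n\cap J_m)\le \mu_{g_1}(I_n)$ and the symmetric bound for $g_2$; summing over $n,m$ and using $g=g_1+g_2$ on every intersection produces $\mu_g^*(E)\le \mu_{g_1}^*(E)+\mu_{g_2}^*(E)+\varepsilon$, and the identity follows on letting $\varepsilon\to 0$.

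For the remaining ``in particular'' claims, given $E\in\mathcal M_{g_1}\cap\mathcal M_{g_2}$ and arbitrary $F\in\mathcal P(\mathbb R)$, applying the outer measure identity to each of $F$, $F\cap E$ and $F\setminus E$, and invoking the Carath\'eodory condition separately for $\mu_{g_1}^*$ and $\mu_{g_2}^*$, immediately yields the Carath\'eodory condition for $\mu_g^*$; the equality $\mu_g(E)=\mu_{g_1}(E)+\mu_{g_2}(E)$ is then the restriction of the outer measure identity. The main obstacle throughout is setting up the disjointification so that the double-intersection bookkeeping works out with respect to all three outer measures simultaneously; everything else is standard outer-measure manipulation.
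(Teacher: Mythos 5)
The paper does not actually prove this proposition; it is quoted verbatim from \cite[Proposition 1.52]{TesisIgnacio} and used as a black box, so there is no in-paper argument to compare against. Your proof is correct and complete as a self-contained argument. The easy direction $\mu_g^*(E)\ge\mu_{g_1}^*(E)+\mu_{g_2}^*(E)$ is exactly as you say (any near-optimal cover for $\mu_g^*$ is simultaneously a cover for both $\mu_{g_i}^*$, and the $g$-length splits termwise). For the reverse inequality, your disjointification lemma is sound: $I_n\setminus\bigcup_{k<n}I_k$ is a finite union of half-open intervals because finite unions of elements of $\mathcal C$ form a ring, and the resulting pieces are disjoint subintervals of $I_n$, so finite superadditivity of $b\mapsto h(b)-h(a)$ gives $\sum_i\mu_h(K_i)\le\sum_n\mu_h(I_n)$ for every derivator $h$ at once, which is precisely what lets you disjointify the $g_1$-cover and the $g_2$-cover without spoiling either estimate. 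The double-intersection bookkeeping then works: $\{I_n\cap J_m\}_{n,m}$ covers $E$, and the bounds $\sum_m\mu_{g_1}(I_n\cap J_m)\le\mu_{g_1}(I_n)$ and $\sum_n\mu_{g_2}(I_n\cap J_m)\le\mu_{g_2}(J_m)$ follow from countable superadditivity (or simply from countable additivity of $\mu_{g_i}$ on Borel sets). The Carathéodory-condition and restriction arguments for the two "in particular" claims are immediate from the outer measure identity, as you note.
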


If we restrict to Borel sets, $\mu_g=\mu_{g_1}+\mu_{g_2}$. This can help us compute integrals over $\mu_g$. Let $X\in\mathcal{B}$ and $f:X\to\mathbb{F}$ a $g_1$ and $g_2$-measurable map, then $f$ is $g$-integrable if and only if is $g_1$ and $g_2$-integrable and
\[  \int_{X}f\operatorname{d}\mu_g=\int_{X}f\operatorname{d}(\mu_{g_1}+\mu_{g_2})=\int_{X}f\operatorname{d}\mu_{g_1}+\int_{X}f\operatorname{d}\mu_{g_2}.\]  

\section{$g$-Monomials}
\label{monomios}
In this section we define the $g$-monomials and $g$-polynomials and present some interesting properties. We also compute the $g$-monomials explicitly in the case where $g$ is either a continuous or a totally discontinuous derivator. Finally, in Theorem~\ref{ggcgb}, we show that any $g$-monomial is in fact a combination of monomials of these two type of derivators, continuous or totally discontinuous. This relationship amongst monomials will have an impact as well on the exponential series, as we show in Section~\ref{exposeries}.

\subsection{Basic notions}

We recall that, in the usual case, a function $f$ is analytic on an open subset $\Omega$, if $\forall \,x_0\in\Omega$, there exists $\varepsilon>0$ and $\{a_n\}_ {n\in\mathbb N}\subset \mathbb{R}$ such that \[  f(x)=\sum_{n\in\mathbb N}a_n(x-x_0)^n\]  for $x\in (x_0-\varepsilon,x_0+\varepsilon)$, where the convergence of the series is absolute and uniform. In this case, $f$ is $\mathcal{C}^{\infty}$-differentiable on $\Omega$ and also \[  a_n=\frac{f^{(n)}(x_0)}{n!}\]  for all $n\in\mathbb N$, see \cite[Chapter 1]{Krantz1992}. In a certain way, an analytic function is just an infinite sum of monomials or a polynomial of infinite degree. Whereas, in the usual case, polynomials represent the regular function prototype, for the case of a given derivator $g$, a polynomial needs not even be $g$-continuous. If we want to define the concept of a Stieltjes-analytic function, we have to look for a series of functions that are as regular as possible and that maintain their properties when we consider an infinite sum of them. In the classical case we have that
\[  
\int_{[x_0,x)}1\operatorname{d}\mu_{\operatorname{Id}}=x-x_0, \,\, 
2\int_{[x_0,x)}s-x_0\operatorname{d}\mu_{\operatorname{Id}}(s)=(x-x_0)^2,\,\,
3\int_{[x_0,x)}(s-x_0)^2\operatorname{d}\mu_{\operatorname{Id}}(s)=(x-x_0)^3,\,\,
\dots,
\]  
which is a very specific instance of a Peano-Beaker series \cite{Baake2011,Rugh1996}. We can replicate this process and define the $g$-monomials as follows. 

\begin{dfn}
Let $g:\mathbb{R}\to\mathbb{R}$ be a derivator and fix some $x_0\in\mathbb{R}$. We define $g_{x_0,0}(x)=1$ for all $x\in\mathbb{R}$. Given any $n\in\mathbb N$, we define $g_{x_0,n}:\mathbb{R}\to\mathbb{R}$ recursively as
\[  
g_{x_0,n}(x)=\begin{dcases}
	n\int_{[x_0,x)}g_{x_0,n-1}\operatorname{d}\mu_g, & x\geq x_0,\\[  1em]
	-n\int_{[x,x_0)}g_{x_0,n-1}\operatorname{d}\mu_g, & x<x_0.
\end{dcases}
\]  

We will call these functions \emph{$g$-monomials centered at $x_0$}, where $x_0$ is called the \emph{center} of $g_{x_0,n}$. We will call the linear combinations of $g$-monomials centered at $x_0$ \emph{$g$-polynomials centered at $x_0$}.

\end{dfn}

{
\begin{rem} Generalizing the concept of monomial by repeatedly integrating the constant function 1 had already been introduced in the context of time scales, see \cite[Section 1.6]{DETS}. However, the framework that we present 
in this work is more general in the sense of given a time scale $\mathbb T$, i.e., a nonempty closed subset of reals, 
we can recover, thanks to \cite[Theorem 3.1]{PousoRodriguez}, the $\Delta$-derivative by use of the Stieltjes differentiation by considering the derivator $g:x\in \mathbb{R} \rightarrow g(x)=\inf \{ s\in \mathbb T , s\geq x\}$. Thus, the monomials in \cite[Section 1.6]{DETS} coincide with the $g$-monomials for this particular choice of $g$.
\end{rem}
}

\begin{rem}
Note that $g_{x_0,n}(x_0)=0$ for all $n\geq1$. Let $a,b \in\mathbb{R}$ be such that $a<x_0<b$. Since $1\in\mathcal{BC}_g([a,b],\mathbb{R})$, Proposition~\ref{primitiva} assures us that $g_{x_0,1}$ is $g$-continuous and bounded on $[a,b]$. In particular, $g_{x_0,1}$ is $g$-integrable and therefore {$g_{x_0,2}$} is well defined. By induction, we have that $g_{x_0,n}$ is well defined, $g$-continuous and bounded on $[a,b]$ for all $n\in\mathbb{N}$. Note that for $x\in[a,x_0]$,
\[  
g_{x_0,n}(x)=n\int_{[a,x)}g_{x_0,n-1}\operatorname{d}\mu_g-n\int_{[a,x_0)}g_{x_0, n-1}\operatorname{d}\mu_g.
\]  
Applying induction again and Proposition~\ref{primder}, it follows that $(g_{x_0,n})'_g(x)=ng_{x_0,n-1}(x)$ for all $x\in(a,b)-C_g$ for $n\in\mathbb N$. Since we have taken arbitrary $a$ and $b$,
\begin{equation}
	\label{monomiosder}
	(g_{x_0,n})'_g(x)=ng_{x_0,n-1}(x),\quad \forall x\in\mathbb{R}-C_g\quad \forall n\in\mathbb N.
\end{equation}
If $a\notin N^{-}_g$ and $b\notin D_g\cup N^{+}_g\cup C_g$, 
\[  (g_{x_0,n})'_g(x)=ng_{x_0,n-1}(x),\quad \forall x\in[a,b],\]  
and $g_{x_0,n}\in\mathcal{C}^{\infty}_g([a,b],\mathbb{R})$, for all $n\in\mathbb N$. In particular, we have that $\Omega=\mathbb{R}$ satisfies condition~\eqref{2222221354324} if and only if $\infty\notin N^{+}_g$ so, in that case,
\[  (g_{x_0,n})'_g(x)=ng_{x_0,n-1}(x),\quad \forall x\in\mathbb{R}\quad \forall n\in\mathbb N.\]  
\end{rem}

The reason why, in general, we cannot assure the $g$-derivative of $g_{x_0,n}$ is $ng_{x_0,n-1}$ for all points of the real line is that there are derivators for which the derivative is simply not well-defined. Consider the following example.

\begin{exa}
\label{55886}
Let $g:\mathbb{R}\to\mathbb{R}$ be defined as
\[  
g(x)= \begin{dcases} 
	-1, & x\leq -1, \\
	0, & x\in(-1,1],\\
	1, & x>1.
\end{dcases}
\]  
%
%
Note that there is no $b\in\mathbb{R}$ such that $b\notin D_g\cup N^{+}_g\cup C_g$, precisely because $D_g\cup C_g=\mathbb{R}$. There are no intervals $[a,b]\subset\mathbb{R}$ like those of Definition~\ref{cinfinito}. For this derivator, $\mathcal{C}^{k}_g$-differentiable functions do not exist, at least in the way we have defined them. Note that, if we follow the idea of Definition~\ref{derivadadef}, we can define the $g$-derivative for points in $(-\infty,1]$. However, we cannot $g$-differentiate at points in $(1,\infty)$. Even so, 
\[  C_g=(-\infty,-1)\cup(-1,1)\cup(1,\infty)\]  
is such that $\mu_g(C_g)=0$ and~\eqref{monomiosder} holds. 
\end{exa}
To shorten the notation, if convenient, we will write
\[  
\int_a^b f \operatorname{d}\mu_g:=\begin{dcases}\int_{[a,b)} f \operatorname{d}\mu_g, & a\leq b,\\
-\int_{[b,a)} f \operatorname{d}\mu_g, & a> b,
\end{dcases}
\]  
for all $a,b\in\mathbb R$.
\subsection{Properties}

We will give a list of properties that will help us to provide some intuition on the $g$-monomials. Note that $g_{x_0,1}(x)=g(x)-g(x_0)$ for all $x\in\mathbb{R}$. To simplify the notation, whenever we do not specify where we center the $g$-monomials, we will assume that we do so at a given point $x_0\in\mathbb{R}$. From now on, $g_{x_0 ,n}\equiv g_n$.

We recover the notion of monomial in the classic sense when $g=\operatorname{Id}$, that is, $\operatorname{Id}_{0,n}(x)=x^n$ for all $x\in\mathbb{R}$ and $n\in\mathbb N$. This can be seen by induction, although we will give a more general proof in Proposition~\ref{gmoncontinua}.

\subsubsection{Some bounds for $g$-monomials}
\begin{lem}
Let $g:\mathbb{R}\to\mathbb{R}$ be a derivator and fix some $x_0\in\mathbb{R}$. We have that:

\begin{enumerate}[noitemsep, itemsep=.1cm]
	\item[~1.] For $x\geq x_0$ and $n\in\mathbb N$, $g_{n}(x)\geq0$.
	\item[~2.] For $x\leq x_0$ and $n\in\mathbb N$, $g_{2n}(x)\geq0$ and $g_{2n-1}(x)\leq0$. 
\end{enumerate}
\end{lem}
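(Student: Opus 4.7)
The plan is a straightforward induction on $n$, using the recursive definition of $g_n$ together with the fact that $\mu_g$ is a positive measure and $g$ is non-decreasing.

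For part (1), I would argue by induction on $n\in\mathbb N$. The base case $n=1$ reduces to observing $g_1(x)=g(x)-g(x_0)\geq 0$ for $x\geq x_0$, which is immediate from $g$ being non-decreasing. For the inductive step, assuming $g_{n-1}\geq 0$ on $[x_0,\infty)$, one gets
\[
g_n(x)=n\int_{[x_0,x)}g_{n-1}\,\operatorname{d}\mu_g\geq 0
\]
since the integrand is non-negative and $\mu_g$ is a positive measure.

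For part (2), I would do a joint induction, propagating the statement ``$g_{2n}\geq 0$ and $g_{2n-1}\leq 0$ on $(-\infty,x_0]$'' from $n$ to $n+1$. The base case $n=1$ gives $g_1(x)=g(x)-g(x_0)\leq 0$ for $x\leq x_0$, and then
\[
g_2(x)=-2\int_{[x,x_0)}g_1\,\operatorname{d}\mu_g\geq 0,
\]
because the integrand is $\leq 0$ on $[x,x_0)\subseteq(-\infty,x_0]$ and the outer sign is $-$. The inductive step has the same shape: the extra minus sign in the definition of $g_n$ for $x<x_0$ flips the sign at each step, which is precisely what the alternating pattern $(g_{2n}\geq 0,\,g_{2n-1}\leq 0)$ records. (At $x=x_0$ both statements are trivial since $g_n(x_0)=0$ for $n\geq 1$.)

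I do not expect any real obstacle: the only subtlety is making sure, at each induction step, that the sign hypothesis on $g_{n-1}$ is applied on the correct integration interval $[x,x_0)$ (so one needs $g_{n-1}$ to have constant sign on all of $(-\infty,x_0]$, not just at the endpoint), which is exactly what the induction hypothesis provides.
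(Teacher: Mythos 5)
Your proof is correct and follows essentially the same route as the paper: induction on $n$, with the base case $g_1(x)=g(x)-g(x_0)$, positivity of $\mu_g$ handling the right-hand side, and the extra minus sign in the definition for $x<x_0$ producing the alternating signs on the left. No issues.
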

\begin{proof}
The result is immediate for $n=1$. Applying induction, suppose that the lemma is true for some $n\in\mathbb N$. Given $x\geq x_0$, by definition,
\[  g_{n+1}(x)=(n+1)\int_{[x_0,x)} g_{n}\operatorname{d}\mu_g. \]  
By induction we know that $g_{n}$ is non-negative at $[x_0,x)$, so $g_{n+1}(x)\geq0$. If $x<x_0$, by definition
\[  g_{n+1}(x)=-(n+1)\int_{[x,x_0)} g_{n}\operatorname{d}\mu_g. \]  
By induction we know that $g_{n}$ has a constant sign on $[x,x_0)$, so $g_{n+1}(x)$ has the opposite sign.
\end{proof}

\begin{lem}[{\cite[Lemma 2.13]{Wronskian}}]
\label{formulapartes}
Given any $w_1,w_2\in\mathcal{AC}_g([a,b],\mathbb{R})$, we have that $w_1w_2\in\mathcal{AC}_g([a,b],\mathbb{R})$ and, for each $t\in[a,b]$,
\begin{align*} 
	 &w_1(t)w_2(t)-w_1(a)w_2(a)\\ = & 
	\displaystyle\int_{[a,t)}(w_1)'_gw_2\operatorname{d}\mu_g+\int_{[a,t)}w_1(w_2)'_g\operatorname{d}\mu_g+\int_{[a,t)}(w_1)'_g(w_2)'_g\Delta g\operatorname{d}\mu_g.
\end{align*}

\end{lem}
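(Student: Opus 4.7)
The plan is to obtain the formula as a consequence of the Fundamental Theorem of Calculus for the Lebesgue--Stieltjes integral (Theorem~\ref{abscontg}) applied to the product $w_1w_2$, combined with the Stieltjes product rule provided by Proposition~\ref{opder}.

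First, I would verify that $w_1w_2 \in \mathcal{AC}_g([a,b],\mathbb{R})$. Since $w_1, w_2$ are $g$-absolutely continuous on a compact interval, they are in particular $g$-continuous and hence bounded there (by Proposition~\ref{continua}); set $M = \max(\sup|w_1|, \sup|w_2|)$. Given $\varepsilon>0$, I would pick $\delta>0$ that works simultaneously for $w_1$ and $w_2$ against the tolerance $\varepsilon/(2M)$. For any pairwise disjoint family $\{(a_n,b_n)\}_{n=1}^m$ with $\sum_{n=1}^m(g(b_n)-g(a_n))<\delta$, the bound
\[
|w_1(b_n)w_2(b_n) - w_1(a_n)w_2(a_n)| \le |w_2(b_n)|\,|w_1(b_n)-w_1(a_n)| + |w_1(a_n)|\,|w_2(b_n)-w_2(a_n)|,
\]
when summed over $n$, yields a total less than $\varepsilon$, giving $w_1w_2 \in \mathcal{AC}_g([a,b],\mathbb{R})$.

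Next, applying Theorem~\ref{abscontg} to $w_1w_2$, for every $t\in[a,b]$ one has
\[
w_1(t)w_2(t) - w_1(a)w_2(a) = \int_{[a,t)}(w_1w_2)'_g\,\operatorname{d}\mu_g,
\]
and $(w_1w_2)'_g$ exists $\mu_g$-almost everywhere. At every point $x$ at which both $(w_1)'_g(x)$ and $(w_2)'_g(x)$ exist (which occurs off a $\mu_g$-null set, again by Theorem~\ref{abscontg}), Proposition~\ref{opder} gives
\[
(w_1w_2)'_g(x) = (w_1)'_g(x)\,w_2(x^*) + w_1(x^*)\,(w_2)'_g(x) + (w_1)'_g(x)(w_2)'_g(x)\,\Delta g(x^*),
\]
where $x^*=x$ if $x\notin C_g$ and $x^*=b_n$ if $x\in(a_n,b_n)\subset C_g$.

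Finally, I would substitute this expression into the integral and split by linearity. Since $g$ is constant on each $(a_n,b_n)$, we have $\mu_g(C_g)=0$, so the integrand can be evaluated on the complement of $C_g$, where $x^*=x$. Consequently the integrand agrees $\mu_g$-almost everywhere with $(w_1)'_gw_2 + w_1(w_2)'_g + (w_1)'_g(w_2)'_g\Delta g$, and splitting the integral produces the three claimed summands. The main obstacle is the careful bookkeeping that replaces $x^*$ by $x$ inside the integral, a step fully justified by $\mu_g(C_g)=0$; apart from that, everything reduces mechanically to the already established FTC and product rule.
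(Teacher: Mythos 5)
The paper does not actually prove this lemma: it is imported verbatim as \cite[Lemma 2.13]{Wronskian}, so there is no internal proof to compare against. Your argument is the natural reconstruction and is essentially correct: establish $w_1w_2\in\mathcal{AC}_g$, apply Theorem~\ref{abscontg} to the product, identify $(w_1w_2)'_g$ $\mu_g$-a.e.\ via the product rule of Proposition~\ref{opder}, and discard $C_g$ because $\mu_g(C_g)=0$. Three small points deserve tightening. First, Proposition~\ref{continua} does not assert boundedness; the cleanest route is $|w_i(x)|\le |w_i(a)|+\int_{[a,b)}|(w_i)'_g|\,\operatorname{d}\mu_g$, which follows from the representation in Theorem~\ref{abscontg} itself. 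Second, to split the integral by linearity you should check each summand is $g$-integrable: the first two follow from boundedness of $w_i$ together with $(w_i)'_g\in\mathcal{L}^1_{\mu_g}$, and the third either as the difference of integrable functions or directly from $|(w_i)'_g(x)|\Delta g(x)=|w_i(x^+)-w_i(x)|\le 2M$ on $D_g$. Third, the only exceptional point of positive $\mu_g$-measure not covered by the ``neighborhood'' version of Proposition~\ref{opder} is $x=a$ when $a\in D_g$; there the product rule must be verified one-sidedly, which is immediate from $f'_g(a)=(f(a^+)-f(a))/\Delta g(a)$. With these remarks your proof goes through.
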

 Thanks to equation~\eqref{monomiosder}, it follows that $g_n\in\mathcal{AC}_g([a,b],\mathbb{R})$ for any $a,b\in\mathbb{R}$ such that $a<b$, for all $n\in\mathbb N$. From~\eqref{monomiosder}, we know that $(g_{n})'_g=ng_{n-1}$, except in a $g$-measurable set of null $\mu_g$-measure. We recall that, by definition, $g_n(x_0)=0$ for all $n\geq1$. Then, thanks to Lemma~\ref{formulapartes}, we have that, for all $n\in \mathbb N$ and $x\in\mathbb R$,
\begin{align*}
&\int_{x_0}^x g_{n-k}g_k\operatorname{d}\mu_g =\frac{1}{n-k+1}\int_{x_0}^x(g_{n-k+1})'_gg_k\operatorname{d}\mu_g\\
=&\frac{1}{n-k+1}\left(g_{n-k+1}(x)g_{k}(x)-k\int_{x_0}^x g_{n-k+1}g_{k-1}\operatorname{d}\mu_g-(n-k+1)k\int_{x_0}^x g_{n-k}g_{k-1}\Delta g\operatorname{d}\mu_g\right)
\end{align*}
for all $k\in\{1,\dots,n-1\}$.
\begin{pro}
\label{cotasupder}
Let $g:\mathbb{R}\to \mathbb{R}$ be a derivator and fix some $x_0\in\mathbb{R}$. For $x\geq x_0$ and $n\in\mathbb N$,
\[  0\leq g_{n}(x)\leq g_{n-k}(x)g_k(x)\]  
for all $k\in\{0,\dots,n\}$. In particular, $g_n(x)\leq g_{1}(x)^n$.
\end{pro}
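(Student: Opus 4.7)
The non-negativity $0 \leq g_n(x)$ for $x \geq x_0$ is already granted by the preceding lemma, so the real task is to establish the upper bound. The plan is to proceed by strong induction on $n$. The cases $n \in \{0,1\}$ are immediate, and for general $n$ the boundary choices $k \in \{0, n\}$ reduce to a trivial identity because $g_0 \equiv 1$; moreover, by the $k \leftrightarrow n-k$ symmetry of the statement one may assume $1 \leq k \leq n-1$.

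For $n \geq 2$ and such $k$, the key step is to apply the Stieltjes integration-by-parts formula of Lemma~\ref{formulapartes} to the product $g_{n-k}\,g_k$ on $[x_0, x]$. Since $g_{n-k}(x_0) = g_k(x_0) = 0$ and $(g_m)'_g = m\, g_{m-1}$ holds $\mu_g$-almost everywhere by~\eqref{monomiosder} (recall $\mu_g(C_g) = 0$), this yields
\[
g_{n-k}(x)\, g_k(x) = (n-k) \int_{x_0}^x g_{n-k-1}\, g_k \operatorname{d}\mu_g + k \int_{x_0}^x g_{n-k}\, g_{k-1} \operatorname{d}\mu_g + (n-k)k \int_{x_0}^x g_{n-k-1}\, g_{k-1}\, \Delta g \operatorname{d}\mu_g.
\]
The third integral is non-negative because each of its factors is non-negative on $[x_0, x)$.

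To control the first two integrals I would invoke the inductive hypothesis at level $n-1$: for every $s \in [x_0, x]$ one has $g_{n-1}(s) \leq g_{n-k-1}(s)\, g_k(s)$ and $g_{n-1}(s) \leq g_{n-k}(s)\, g_{k-1}(s)$ (both $k$ and $k-1$ lie in the admissible range $\{0,\ldots,n-1\}$). Integrating these pointwise bounds and using that $n \int_{x_0}^x g_{n-1}\operatorname{d}\mu_g = g_n(x)$ gives
\[
(n-k)\int_{x_0}^x g_{n-k-1}\, g_k \operatorname{d}\mu_g + k\int_{x_0}^x g_{n-k}\, g_{k-1} \operatorname{d}\mu_g \geq \frac{n-k}{n} g_n(x) + \frac{k}{n} g_n(x) = g_n(x),
\]
which, combined with the non-negative third term, delivers $g_n(x) \leq g_{n-k}(x)\, g_k(x)$. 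The ``in particular'' clause is then obtained by iterating the case $k=1$: $g_n(x) \leq g_{n-1}(x) g_1(x) \leq g_{n-2}(x) g_1(x)^2 \leq \cdots \leq g_1(x)^n$.

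The main point to watch is a clean bookkeeping of the indices — making sure both $k$ and $k-1$ land in the admissible range for the induction at level $n-1$ — together with verifying that Lemma~\ref{formulapartes} is applicable, which needs $g_{n-k}, g_k \in \mathcal{AC}_g([x_0, x], \mathbb{R})$; this is granted by~\eqref{monomiosder} and Remark~\ref{monomiosderob}. No analytic subtlety beyond that is expected, since the whole argument rests on a single application of the integration-by-parts identity followed by monotone integration of the inductive bound.
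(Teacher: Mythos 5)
Your proof is correct and rests on the same ingredients as the paper's: induction, the integration-by-parts identity of Lemma~\ref{formulapartes} applied to the product $g_{n-k}\,g_k$, the non-negativity of the $\Delta g$ term, and the inductive bound used pointwise inside the integrals together with $n\int_{x_0}^x g_{n-1}\operatorname{d}\mu_g=g_n(x)$. The only difference is organizational --- you expand $g_{n-k}(x)g_k(x)$ and bound it below by $g_n(x)$ via the hypothesis at level $n-1$, whereas the paper bounds $g_{n+1}(x)$ from above and rearranges after $g_{n+1}$ reappears on the right-hand side --- so the two arguments are essentially the same.
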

\begin{proof}
Note that for $n=1$ we already have the result. Let us apply induction, suppose the result is true for some $n\in\mathbb N$. Take $k\in\{1,\dots,n-1\}$ (otherwise the statement is trivial). Then, for $x\geq x_0$,
\begin{align*}
	g_{n+1}(x)=&(n+1)\int_{x_0}^xg_{n}\operatorname{d}\mu_g\,\,\leq\,\, (n+1)\int_{x_0}^xg_{n-k}g_{k}\operatorname{d}\mu_g\,\,=\,\,\frac{n+1}{n-k+1}\int_{x_0}^x(g_{n-k+1})'_g g_{k}\operatorname{d}\mu_g\\
	=&\frac{n+1}{n-k+1}\left(g_{n-k+1}(x)g_{k}(x)-k\int_{x_0}^x g_{n-k+1}g_{k-1}\operatorname{d}\mu_g-(n-k+1)k\int_{x_0}^x g_{n-k}g_{k-1}\Delta g\operatorname{d}\mu_g\right)\\
	\leq&\frac{n+1}{n-k+1}\left(g_{n-k+1}(x)g_{k}(x)-k\int_{x_0}^x g_{n}\operatorname{d}\mu_g-(n-k+1)k\int_{x_0}^x g_{n-k}g_{k-1}\Delta g\operatorname{d}\mu_g\right)\\
	=&\frac{n+1}{n-k+1}\left(g_{n-k+1}(x)g_{k}(x)-\frac{k}{n+1} g_{n+1}(x)-(n-k+1)k\int_{x_0}^x g_{n-k}g_{k-1}\Delta g\operatorname{d}\mu_g\right)	
	.
\end{align*}
Thus, we have that
\[  g_{n+1}(x)\leq g_{n-k+1}(x)g_{k}(x)-(n-k+1)k\int_{x_0}^xg_{n-k}g_{k-1}\Delta g\operatorname{d}\mu_g\leq g_{n-k+1}(x)g_{k}(x).\]  
The last inequality follows from the fact that the second addend is negative. \qedhere
\end{proof}

Sadly, the result is not true for $x<x_0$. For non-continuous derivators the $g$-monomials behave much better to the right than to the left. In fact, we will see that we have the reverse inequality to the left.
\begin{pro}
\label{cotainfizq}
Let $g:\mathbb{R}\to \mathbb{R}$ be a derivator and fix some $x_0\in\mathbb{R}$. For $x<x_0$ and $n\in\mathbb N$,
\[  \left\lvert g_{n}(x)\right\rvert\geq \left\lvert g_{n-k}(x)\right\rvert\left\lvert g_{k}(x)\right\rvert\]  
for all $k\in\{0,\dots,n\}$. In particular, $\left\lvert g_n(x)\right\rvert\geq\left\lvert g_{1}(x)\right\rvert^n$.
\end{pro}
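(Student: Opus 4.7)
The plan is to mirror the induction argument used in the proof of Proposition~\ref{cotasupder}, exploiting the sign pattern from the preceding lemma: for every $m\in\bN$ and every $y\le x_0$,
\[g_m(y)=(-1)^m\left\lvert g_m(y)\right\rvert.\]
Combined with the definition of $g_n$ for $x<x_0$, this gives
\[\left\lvert g_n(x)\right\rvert=n\int_{[x,x_0)}\left\lvert g_{n-1}\right\rvert\operatorname{d}\mu_g,\]
which plays for $x<x_0$ the same role that the defining identity plays for $x\ge x_0$ in Proposition~\ref{cotasupder}. The cases $k=0$ and $k=n$ are trivial, and the base case $n=1$ is immediate, so I would proceed by induction on $n$.

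For the induction step, fix $k\in\{1,\dots,n\}$. By the identity above and the inductive hypothesis $\left\lvert g_n(y)\right\rvert\ge\left\lvert g_{n-k}(y)\right\rvert\left\lvert g_k(y)\right\rvert$ applied pointwise on $[x,x_0)$, one gets
\[\left\lvert g_{n+1}(x)\right\rvert\ge(n+1)\int_{[x,x_0)}\left\lvert g_{n-k}\right\rvert\left\lvert g_k\right\rvert\operatorname{d}\mu_g.\]
Next I would apply the integration-by-parts formula (Lemma~\ref{formulapartes}) to the product $g_{n-k+1}g_k$ on $[x,x_0)$, using $g_{n-k+1}(x_0)=g_k(x_0)=0$ together with $(g_m)'_g=m\,g_{m-1}$ from~\eqref{monomiosder}. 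Multiplying the resulting identity by $(-1)^n$ and invoking $g_m(y)=(-1)^m\left\lvert g_m(y)\right\rvert$ yields
\begin{align*}
	\left\lvert g_{n-k+1}(x)\right\rvert\left\lvert g_k(x)\right\rvert=\ & (n-k+1)\int_{[x,x_0)}\left\lvert g_{n-k}\right\rvert\left\lvert g_k\right\rvert\operatorname{d}\mu_g+k\int_{[x,x_0)}\left\lvert g_{n-k+1}\right\rvert\left\lvert g_{k-1}\right\rvert\operatorname{d}\mu_g\\
	&-k(n-k+1)\int_{[x,x_0)}\left\lvert g_{n-k}\right\rvert\left\lvert g_{k-1}\right\rvert\Delta g\operatorname{d}\mu_g.
\end{align*}
The last integral is non-negative and, by the inductive hypothesis, $\int_{[x,x_0)}\left\lvert g_{n-k+1}\right\rvert\left\lvert g_{k-1}\right\rvert\operatorname{d}\mu_g\le\int_{[x,x_0)}\left\lvert g_n\right\rvert\operatorname{d}\mu_g=\left\lvert g_{n+1}(x)\right\rvert/(n+1)$. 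Solving the identity above for the first integral on the right and combining with the earlier bound leads, after elementary algebra, to $\left\lvert g_{n+1}(x)\right\rvert\ge\left\lvert g_{n+1-k}(x)\right\rvert\left\lvert g_k(x)\right\rvert$, closing the induction.

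The main obstacle I anticipate is the bookkeeping of signs: since $g_m$ alternates sign with the parity of $m$ on $(-\infty,x_0]$, the integration-by-parts identity mixes quantities of opposite signs, and it is precisely this feature that turns what on the right of $x_0$ was an upper bound into a lower bound here. A pleasant structural difference with the right-sided situation is that the $\Delta g$-correction now points in the direction of our inequality and can simply be dropped, whereas in the proof of Proposition~\ref{cotasupder} it had to be absorbed. Once the signs are tracked consistently (equivalently, by replacing $g_m$ with the non-negative auxiliary functions $h_m:=(-1)^m g_m$ on $(-\infty,x_0]$, which satisfy $h_m(x_0)=0$ and $h_n(x)=n\int_{[x,x_0)}h_{n-1}\operatorname{d}\mu_g$), the remaining algebra is essentially identical to that of Proposition~\ref{cotasupder}.
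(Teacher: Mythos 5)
Your proposal is correct and follows essentially the same route as the paper: the same induction, the same application of the integration-by-parts identity to $g_{n-k+1}g_k$, the same bound $\int_{[x,x_0)}\lvert g_{n-k+1}\rvert\lvert g_{k-1}\rvert\operatorname{d}\mu_g\le\lvert g_{n+1}(x)\rvert/(n+1)$ via the inductive hypothesis, and the same observation that the $\Delta g$-term now has the favourable sign. The only difference is cosmetic: you track signs explicitly via $g_m=(-1)^m\lvert g_m\rvert$, where the paper keeps the signed quantities and closes with a reverse-triangle-inequality and a same-sign remark.
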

\begin{proof}
Again, for $n=1$ the result is immediate. Suppose the result is true for some $n\in\mathbb N$. Take $k\in\{1,\dots,n-1\}$, for $x<x_0$, we have that
\begin{align*}
&	\left\lvert \frac{g_{n+1}(x)}{n+1}\right\rvert=\left\lvert \int_{x_0}^xg_{n}\operatorname{d}\mu_g\right\rvert=\int_{x}^{x_0}\left\lvert g_{n}\right\rvert\operatorname{d}\mu_g\geq\int_{x}^{x_0}\left\lvert g_{n-k}\right\rvert\left\lvert g_{k}\right\rvert\operatorname{d}\mu_g =\left\lvert \int_{x_0}^x g_{n-k}g_{k}\operatorname{d}\mu_g\right\rvert\\
	=&\frac{1}{n-k+1}\left\lvert g_{n-k+1}(x)g_{k}(x)-k\int_{x_0}^x g_{n-k+1}g_{k-1}\operatorname{d}\mu_g-(n-k+1)k\int_{x_0}^x g_{n-k}g_{k-1}\Delta g\operatorname{d}\mu_g\right\rvert\\
	\geq&\frac{1}{n-k+1}\left(\left\lvert g_{n-k+1}(x)g_{k}(x)-(n-k+1)k\int_{x_0}^x g_{n-k}g_{k-1}\Delta g\operatorname{d}\mu_g\right\rvert-k\left\lvert\int_{x_0}^x g_{n-k+1}g_{k-1}\operatorname{d}\mu_g\right\rvert\right)\\
\geq&\frac{1}{n-k+1}\left(\left\lvert g_{n-k+1}(x)g_{k}(x)-(n-k+1)k\int_{x_0}^x g_{n-k}g_{k-1}\Delta g\operatorname{d}\mu_g\right\rvert-\frac{k}{n+1}\left\lvert g_{n+1}(x)\right\rvert\right).
\end{align*}
Then,
\[  \left\lvert g_{n+1}(x)\right\rvert \geq \left\lvert g_{n-k+1}(x)g_{k}(x)-(n-k+1)k\int_{x_0}^x g_{n-k}g_{k-1}\Delta g\operatorname{d}\mu_g\right\rvert\geq \left\lvert g_{n-k}(x)\right\rvert\left\lvert g_{k}(x)\right\rvert. \]  
The last inequality results from the fact that the two addends have the same sign.\qedhere
\end{proof}

Note that $g=\operatorname{Id}$ reaches the bounds of the Propositions~\ref{cotasupder} and~\ref{cotainfizq}. We will now look for a lower bound on the right and an upper bound on the left.
\begin{dfn}
Given $g:\mathbb{R}\to \mathbb{R}$ a derivator. Define $g^{B}:\mathbb{R}\to\mathbb{R}$ as:
\begin{equation*}
	g^B(x)=\begin{dcases}
		\sum_{s\in[0,x)}\Delta g(s), & x>0,\\
		-\sum_{s\in[x,0)}\Delta g(s), & x\leq 0.
	\end{dcases}
\end{equation*}
We have that $g^{B}$ is nondecreasing and left–continuous. We will say that $g^{B}$ is the \emph{discontinuous} or \emph{jump part} of $g$. We say $g$ is \emph{totally discontinuous} when $g=g^B\ne 0$. We define the \emph{continuous part} of $g$ as follows: 
\[  g^{C}(x):=g(x)- g^B(x),\quad \forall x\in\mathbb{R}.\]  
Thus, $g^{C}$ is nondecreasing and continuous (in the usual sense).
\end{dfn}
By definition we have that $g=g^C+g^B$. We can apply then Proposition~\ref{sumamedidas}. In particular, we have, over Borel sets, that $\mu_g=\mu_{g^C}+\mu_{g^B}$ and, therefore, $\mu_{g^C}\leq\mu_g$ and $\mu_{g^B}\leq\mu_g$.
\begin{pro}
Let $g:\mathbb{R}\to \mathbb{R}$ be a derivator and fix some $x_0\in\mathbb{R}$. If $x\geq x_0$, then
\[  0\leq g_n^C(x)\leq g_n(x)\]  
and
\[  0\leq g_n^B(x)\leq g_n(x)\]  
for all $n \in\mathbb N$.
\end{pro}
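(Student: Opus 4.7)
The plan is a straightforward induction on $n$, leveraging the decomposition $g = g^C + g^B$ and the corresponding measure decomposition $\mu_g = \mu_{g^C} + \mu_{g^B}$ on Borel sets provided by Proposition~\ref{sumamedidas}. I will handle both inequalities simultaneously since the argument is symmetric in $g^C$ and $g^B$.

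For the base case $n=0$ the statement is trivial, while for $n=1$ one has $g_1(x) = g(x)-g(x_0) = (g^C(x)-g^C(x_0)) + (g^B(x)-g^B(x_0)) = g_1^C(x) + g_1^B(x)$, and since both $g^C$ and $g^B$ are nondecreasing, all three quantities are nonnegative for $x \geq x_0$ and the two bounds follow at once. For the inductive step, assuming $0 \leq g_n^C(x) \leq g_n(x)$ for $x \geq x_0$, I will write
\[
g_{n+1}^C(x) \;=\; (n+1)\int_{[x_0,x)} g_n^C \,\operatorname{d}\mu_{g^C} \;\leq\; (n+1)\int_{[x_0,x)} g_n \,\operatorname{d}\mu_{g^C} \;\leq\; (n+1)\int_{[x_0,x)} g_n \,\operatorname{d}\mu_g \;=\; g_{n+1}(x),
\]
where the first inequality uses the induction hypothesis together with monotonicity of the integral, and the second uses the relation $\mu_{g^C} \leq \mu_g$ on Borel sets (from Proposition~\ref{sumamedidas}) combined with the nonnegativity of $g_n$ on $[x_0,\infty)$ established in Subsection~\ref{cotassecc}. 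Nonnegativity of $g_{n+1}^C$ is immediate from the induction hypothesis. The argument for $g^B$ is identical, simply swapping the role of $g^C$ and $g^B$.

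The only minor technicality is to justify that the integrand $g_n$ is $\mu_{g^C}$- and $\mu_{g^B}$-measurable when it appears under these measures, since it was originally built using $\mu_g$. This is not really an obstacle: $g_n$ is $g$-continuous on any compact interval by Remark~\ref{monomiosderob}, hence left-continuous by Proposition~\ref{continua}, hence Borel measurable, and therefore measurable with respect to every measure constructed via Caratheodory's extension theorem, including $\mu_{g^C}$ and $\mu_{g^B}$.

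I expect this to be the shortest and cleanest route; no case analysis or use of the integration-by-parts formula is needed, which is a pleasant contrast with Propositions~\ref{cotasupder} and~\ref{cotainfizq}. The main conceptual point, rather than a real obstacle, is simply to recognize that comparing $g^C$- or $g^B$-monomials with $g$-monomials reduces directly to comparing the underlying measures on Borel sets, and that this comparison propagates through the recursive integral definition of the monomials.
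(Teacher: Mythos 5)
Your proof is correct and follows essentially the same route as the paper: an induction in which one step compares integrands via the induction hypothesis and the other compares the measures $\mu_{g^C}\leq\mu_g$ (Proposition~\ref{sumamedidas}); the paper merely applies these two inequalities in the opposite order, first changing the measure under the nonnegative integrand $g_n^C$ and then enlarging the integrand under $\mu_g$. Your added remark on Borel measurability of the monomials is a harmless (and valid) extra justification the paper leaves implicit.
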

\begin{proof}
The proof is identical for both derivators, we will only do it for $g^C$. Again, the case $n=1$ is immediate. Suppose the above is true for some $n\in\mathbb N$. Then, for $x\geq x_0$,

\[  \int_{x_0}^xg_n^C\operatorname{d}\mu_{g^C}\leq\int_{x_0}^xg_n^C\operatorname{d}\mu_{g}\leq\int_{x_0}^xg_n\operatorname{d}\mu_{g}. \]  
Thus, $g_{n+1}^C(x)\leq g_{n+1}(x)$.\qedhere

\end{proof}

\begin{pro}
\label{cotasupizq}
Let $g:\mathbb{R}\to \mathbb{R}$ be a derivator and fix some $x_0\in\mathbb{R}$. If $x\leq x_0$, then
\[  \left\lvert g_n(x)\right\rvert\leq n!\left\lvert g_1(x)\right\rvert^n\]  
for all $n \in\mathbb N$.

\end{pro}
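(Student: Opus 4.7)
The plan is to prove this by induction on $n$, leveraging the sign information for $g$-monomials at points to the left of $x_0$ together with a pointwise monotonicity estimate on $|g_1|$.

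For the base case $n=1$, the inequality reduces to an equality, so there is nothing to do. For the induction step, assume $|g_{n-1}(s)| \leq (n-1)!\,|g_1(s)|^{n-1}$ for all $s \leq x_0$. The first observation is that on $[x,x_0)$ the monomial $g_{n-1}$ has constant sign (by the parity lemma proved earlier), so
\[
|g_n(x)| = \Big| n\int_{[x,x_0)} g_{n-1}\, \operatorname{d}\mu_g \Big| = n\int_{[x,x_0)} |g_{n-1}|\, \operatorname{d}\mu_g,
\]
using the defining formula for $x<x_0$. (The case $x = x_0$ is immediate since $g_n(x_0) = 0$ for $n \geq 1$.)

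The second key observation is that, since $g$ is nondecreasing, for every $s \in [x,x_0)$ we have $g(x) \leq g(s) \leq g(x_0)$, hence
\[
|g_1(s)| = g(x_0)-g(s) \leq g(x_0)-g(x) = |g_1(x)|.
\]
Applying the induction hypothesis pointwise and using this monotonicity estimate to pull the constant $|g_1(x)|^{n-1}$ out of the integral, we get
\[
|g_n(x)| \leq n\int_{[x,x_0)} (n-1)!\,|g_1(s)|^{n-1}\, \operatorname{d}\mu_g \leq n!\,|g_1(x)|^{n-1}\, \mu_g([x,x_0)).
\]
Finally, by construction $\mu_g([x,x_0)) = g(x_0)-g(x) = |g_1(x)|$, which yields $|g_n(x)| \leq n!\,|g_1(x)|^n$ and closes the induction.

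The only subtle point is the sign-of-$g_{n-1}$ step used to bring the absolute value inside the integral; this is handled cleanly by the parity statement in the first lemma of Section~\ref{cotassecc}. No manipulation of the integration-by-parts identity from Lemma~\ref{formulapartes} is needed here, unlike in Propositions~\ref{cotasupder} and~\ref{cotainfizq}; the bound in this direction is softer because we are happy to pay a factor of $n!$, which is precisely what lets us replace the sharp (but wrong-direction) bound of Proposition~\ref{cotainfizq} by a crude majorization through the supremum of $|g_1|$ on $[x,x_0)$.
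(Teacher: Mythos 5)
Your proof is correct and follows essentially the same route as the paper: induct on $n$, bound $\left\lvert g_n(x)\right\rvert$ by $n\int_{[x,x_0)}\left\lvert g_{n-1}\right\rvert\operatorname{d}\mu_g$, apply the induction hypothesis pointwise, majorize $\left\lvert g_1(s)\right\rvert$ by $\left\lvert g_1(x)\right\rvert$ on $[x,x_0)$, and use $\mu_g([x,x_0))=g(x_0)-g(x)$. The only cosmetic difference is that you invoke the sign lemma to turn the first step into an equality, whereas the paper simply uses the triangle inequality for integrals, which is all that is needed.
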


\begin{proof}\belowdisplayskip=-12pt
Note that the above is true for $n=1$. Suppose it is also true for some $n\in\mathbb N$. Then, for $x\leq x_0$,
\begin{align*}
	\left\lvert g_{n+1}(x)\right\rvert\leq&(n+1)\int_{[x,x_0)}\left\lvert g_n\right\rvert\operatorname{d}\mu_g\leq(n+1)n!\int_{[x,x_0)}\left\lvert g_1\right\rvert^n\operatorname{d}\mu_g\\
	\leq&(n+1)!\left\lvert g_1(x)\right\rvert^n(g(x_0)-g(x))=(n+1)!\left\lvert g_1(x)\right\rvert^{n+1}.
\end{align*}
\end{proof}
As a summary, gathering the results of the entire section, we have the following corollary.
%
%
\begin{cor}
Let $g:\mathbb{R}\to \mathbb{R}$ be a derivator and fix some $x_0\in\mathbb{R}$. Let any $n\in\mathbb N$:
\begin{enumerate}[noitemsep, itemsep=.1cm]
		\item[~1.] For $x\geq x_0$ and $\star\in\{B,C\}$, \[  0\leq g^\star_{n}(x)\leq g_n(x)\leq g_1(x)^n.\]  
		\item[~2.] For $x< x_0$, \[  \left\lvert g_1(x)\right\rvert^n\leq \left\lvert g_n(x)\right\rvert\leq n!\left\lvert g_1(x)\right\rvert^n.\]  
\end{enumerate}
\end{cor}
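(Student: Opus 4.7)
The plan is essentially to assemble the bounds we already proved into a single statement, so I would write the proof as a brief citation of the preceding four results rather than redoing any estimates.

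First I would handle item (1). For the right inequality $g_n(x) \le g_1(x)^n$, I would invoke Proposition~\ref{cotasupder}, which is stated precisely in this form as its concluding ``in particular'' clause. For the left inequality $0 \le g^\star_n(x) \le g_n(x)$ with $\star \in \{B,C\}$, I would cite the unnumbered proposition immediately following Definition~\ref{gbgc}, noting that that proposition is stated for the two cases $g^C$ and $g^B$ separately (with identical proofs) and thus covers both values of $\star$ at once. No new estimate is needed; both ingredients are already in the form required.

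Next I would handle item (2). The lower bound $|g_1(x)|^n \le |g_n(x)|$ is exactly the ``in particular'' clause of Proposition~\ref{cotainfizq} (take $k=1$ in the chain $|g_{n-k}(x)||g_k(x)|$ and iterate, which is how that proposition's closing sentence is obtained). The upper bound $|g_n(x)| \le n!\,|g_1(x)|^n$ is the content of Proposition~\ref{cotasupizq}. So the second item also requires only two citations.

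I do not anticipate any genuine obstacle: the corollary is purely a bookkeeping step designed to have the four preceding bounds in one convenient place for later use (in particular, in the analysis of convergence of the exponential series in Section~\ref{exposeries}). The only stylistic choice to make is whether to restate the inequalities in a single display or to write a two-line proof of the form ``Item~(1) follows from the proposition preceding Corollary~\ref{cotas} together with Proposition~\ref{cotasupder}; item~(2) follows from Propositions~\ref{cotainfizq} and~\ref{cotasupizq}.'' I would opt for the short two-line version, since expanding it would merely duplicate the arguments already given.
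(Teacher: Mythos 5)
Your proposal is correct and matches the paper exactly: the paper introduces this corollary with ``As a summary, gathering the results of the entire section,'' and supplies no proof beyond the implicit citations you list (Proposition~\ref{cotasupder} and the unnumbered proposition after Definition~\ref{gbgc} for item~1; Propositions~\ref{cotainfizq} and~\ref{cotasupizq} for item~2). Nothing further is needed.
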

The upper bound given in the Proposition~\ref{cotasupizq} is not optimal when $g$ is a continuous derivator as we will see in Proposition~\ref{gmoncontinua}. Nevertheless, as Example~\ref{ejemploeee} shows, there exist derivators that reach the bound.
%
\begin{exa}
\label{ejemploeee}
Let $g:\mathbb{R}\to\mathbb{R}$ be defined as
\[  
g(x)= \begin{dcases} 
	0, & x>-1,\\
	-h, & x\leq-1,
\end{dcases}
\]  
where $h\in\mathbb{R}$ is a positive real number. We have that $g^B=g$ and $g^C=0$. Besides, $D_g=\{-1\}$ and $C_g=\mathbb{R}-\{-1\}$. For $x\in(-1,\infty)$, we have that $g_{0,n}(x)=0$ since $g$ is constantly $0$ at $(-1,\infty)$. For any $n\in\mathbb N$ and $x\leq-1$, we have that
\[  \frac{ g_{0,n+1}(x)}{n+1}=-\int_{[x,0)}g_{0,n}\operatorname{d}\mu=-\int_{\{-1\}}g_{0,n}\operatorname{d}\mu_g=-g_{0,n}(-1)\Delta g(-1).\]  
However, since $\Delta g(-1)=-g(-1)=h$,
\[  g_{0,n+1}(x)=(n+1)g_{0,n}(-1)g(-1)=-(n+1)hg_{0,n}(-1),\]  
so $g_{0,n}(x)=n!(-h)^n$ for all $x\leq-1$. We have $g_{0,n}(x)=n!g_{0,1}(x)^n$ for all $x\leq0$.
\end{exa}

\subsubsection{Center change}

In the classic case, if we want to change the center point of a power series we need to know the relation of the monomials centered at both points (the Binomial theorem). We achieve just that with the following result.
\begin{pro}
\label{relpolprop}
Let $g:\mathbb{R}\to \mathbb{R}$ be a derivator. Fix some $r,s\in\mathbb{R}$ and $n\in\mathbb N$, we have that
\begin{equation}
	\label{relpol}
	g_{r,n}(x)=\sum_{k=0}^n {n\choose k} g_{r,k}(s) g_{s,n-k}(x)
\end{equation}
for any $x\in\mathbb{R}$. 
\end{pro}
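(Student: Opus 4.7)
The plan is to proceed by induction on $n$. The base case $n=0$ is immediate, since both sides equal $1$. For the inductive step, assume~\eqref{relpol} holds for $n$, and denote
\[ P_{n+1}(x) := \sum_{k=0}^{n+1} \binom{n+1}{k} g_{r,k}(s)\, g_{s,n+1-k}(x). \]
The strategy is to show that $P_{n+1}$ and $g_{r,n+1}$ are $g$-absolutely continuous functions whose values at $x=s$ agree and whose $g$-derivatives agree $\mu_g$-almost everywhere, so that Theorem~\ref{abscontg} forces the two functions to coincide everywhere on $\mathbb{R}$.

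For the point-value comparison I would evaluate at $x=s$: because $g_{s,j}(s)=0$ for every $j\geq 1$ by Remark~\ref{monomiosderob}, only the $k=n+1$ summand in $P_{n+1}(s)$ survives and equals $g_{r,n+1}(s)$. For the derivatives, the linearity of the Stieltjes derivative together with~\eqref{monomiosder} give, on $\mathbb{R}-C_g$,
\[ (P_{n+1})'_g(x) = \sum_{k=0}^{n} \binom{n+1}{k}(n+1-k)\, g_{r,k}(s)\, g_{s,n-k}(x); \]
the combinatorial identity $\binom{n+1}{k}(n+1-k)=(n+1)\binom{n}{k}$ together with the inductive hypothesis turn this sum into $(n+1)g_{r,n}(x)$, which by~\eqref{monomiosder} equals $(g_{r,n+1})'_g(x)$. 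Since $\mu_g(C_g)=0$, the two $g$-derivatives agree $\mu_g$-a.e.

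The $g$-absolute continuity of both functions on every bounded interval is built into the setup via Remark~\ref{monomiosderob}: each $g$-monomial is, by construction, a Lebesgue-Stieltjes primitive of a bounded $g$-continuous function, hence $g$-AC by Theorem~\ref{abscontg}, and $P_{n+1}$ is a finite linear combination of such primitives. Consequently $H := g_{r,n+1}-P_{n+1}$ is $g$-AC with $H(s)=0$ and $H'_g=0$ $\mu_g$-a.e., and Theorem~\ref{abscontg} yields $H\equiv 0$ on every bounded interval containing $s$, hence on all of $\mathbb{R}$.

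The step I expect to require the most care is the choice of the base point. Evaluating at $x=r$ would give $g_{r,n+1}(r)=0$ on the left, but $P_{n+1}(r)$ would remain a nontrivial sum whose vanishing would itself need to be established, producing a circular dependence. The asymmetric choice $x=s$, where the monomials $g_{s,j}$ collapse for $j\geq 1$, reduces the initial-value check to a tautology and lets the induction close cleanly via uniqueness of $g$-AC primitives.
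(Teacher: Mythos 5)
Your proof is correct, but it takes a genuinely different route from the paper's. The paper also inducts on $n$, but closes the inductive step by a direct computation: it splits $\int_r^x g_{r,n}\,\operatorname{d}\mu_g$ as $\int_r^s+\int_s^x$, substitutes the inductive hypothesis under the second integral, integrates term by term using the defining recursion of the $g_{s,j}$, and reindexes the binomial coefficients. You instead compare the two sides as $g$-absolutely continuous functions: equal value at the (well-chosen) point $x=s$, equal $g$-derivatives off $C_g$ via~\eqref{monomiosder} and the identity $\binom{n+1}{k}(n+1-k)=(n+1)\binom{n}{k}$, and then uniqueness of $g$-AC primitives through Theorem~\ref{abscontg}. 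Both arguments are sound. The paper's version is more self-contained — it uses nothing beyond the definition of the monomials and linearity of the integral, and in particular does not need $\mu_g(C_g)=0$, the a.e.\ differentiation identity, or the Fundamental Theorem of Calculus equivalence. Your version buys a cleaner conceptual structure (the reindexing is replaced by a one-line derivative computation) at the cost of invoking heavier machinery; the ingredients you rely on ($g_n\in\mathcal{AC}_g([a,b],\mathbb{R})$, $\mu_g(C_g)=0$, linearity of the Stieltjes derivative off $C_g$) are all established in the paper, and your observation about why $x=s$ rather than $x=r$ is the right evaluation point is exactly the asymmetry that makes the argument close.
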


\begin{proof}
Take $n=1$ and $x\in\mathbb{R}$, we already have that
\[  g_{r,1}(x)=g(x)-g(r)=g(x)-g(s)+g(s)-g(r)=g_{s,1}(x)+g_{r,1}(s).\]  
We proceed by induction. If the above is true for some $n\in\mathbb N$ then, for all $r,s,x\in\mathbb R$,
\begin{align*}
	\frac{g_{r,n+1}(x)}{n+1}&=\int_{r}^xg_{r,n}\operatorname{d}\mu_g=\int_{r}^sg_{r,n}\operatorname{d}\mu_g+\int_{s}^xg_{r,n}\operatorname{d}\mu_g\\[  .3em]
	&=\frac{g_{r,n+1}(s)}{n+1}+\int_{s}^x\sum_{k=0}^n {n\choose k} g_{r,k}(s) g_{s,n-k}(t)\operatorname{d}\mu_g(t)\\[  .3em]
	&=\frac{g_{r,n+1}(s)}{n+1}+\sum_{k=0}^n {n\choose k} g_{r,k}(s) \int_{s}^xg_{s,n-k}(t)\operatorname{d}\mu_g(t)\\[  .3em]
	&=\frac{g_{r,n+1}(s)}{n+1}+\sum_{k=0}^n {n\choose k} g_{r,k}(s) \frac{g_{s,n-k+1}(x)}{n-k+1}.
\end{align*}
Hence,
\begin{align*}
	g_{r,n+1}(x)&=g_{r,n+1}(s)g_{s,0}(x)+\sum_{k=0}^n \frac{(n+1)n!}{(n-k+1)(n-k)!k!} g_{r,k}(s) g_{s,n-k+1}(x)\\[  .3em]
	&=\sum_{k=0}^{n+1} {n+1\choose k} g_{r,k}(s) g_{s,n+1-k}(x).
\end{align*}
\end{proof}

Proposition~\ref{relpolprop} works as a generalization of the Binomial theorem. If $\operatorname{Id}=g$, expression~\eqref{relpol} tells us that
\[  (x-r)^n=\operatorname{Id}_{r,n}(x)=\sum_{k=0}^n {n\choose k} \operatorname{Id}_{r,k}(s) \operatorname{Id}_{s,n-k}(x)=\sum_{k=0}^n {n\choose k} (s-r)^k (x-s)^{n-k}.\]  

From ~\eqref{relpol} we deduce that any $g$-monomial centered at $r$ can be written as a linear combination of $g$-monomials centered at $s$. In particular, any $g$-polynomial centered at $r$ is a \mbox{$g$-polynomial} centered at $s$. A $g$-polynomial could be a finite linear combination of $g$-monomials centered at different points. In view of Proposition~\ref{relpolprop}, any $g$-polynomial is a linear combination of \mbox{$g$-monomials} centered at a single point. That is, regardless of the $x_0$ chosen, any $g$-polynomial is a finite linear combination of $\{g_{{x_0},n}\}_{n=0}^\infty$.
\subsubsection{$g$-Monomials in the continuous case}
When the derivator is continuous, the monomials have a very reasonable explicit formula.
\begin{pro}
\label{gmoncontinua}
Let $g:\mathbb{R}\to \mathbb{R}$ be a continuous derivator and fix some $x_0\in\mathbb{R}$. Then, for any $n\in\mathbb N$,
\[  g_n(x)=g_{n-1}(x)g_1(x)\]  
for all $x\in\mathbb{R}$. In particular, $g_n(x)=g_1(x)^n$, for 
any $n\in\mathbb N$ and for all $x\in\mathbb{R}$.
%
\end{pro}
\begin{proof}
Suppose the above holds for some $n\in\mathbb N$. Given $x\in\mathbb{R}$, thanks to the Lemma~\ref{formulapartes}, we have that
\begingroup
\allowdisplaybreaks
\begin{align*}
	\frac{g_{n+1}(x)}{n+1}&=\int_{x_0}^xg_n\operatorname{d}\mu_g=\int_{x_0}^xg_{n-1}g_1\operatorname{d}\mu_g=\frac{1}{n}\int_{x_0}^x(g_{n})'_gg_1\operatorname{d}\mu_g\\[  .3em]
	&=\frac{1}{n}\left(g_{n}(x)g_{1}(x)-\int_{x_0}^xg_{n}\operatorname{d}\mu_g-n\int_{x_0}^xg_{n-1}\Delta g\operatorname{d}\mu_g\right)=\frac{1}{n}\left(g_{n}(x)g_{1}(x)-\frac{g_{n+1}(x)}{n+1}\right),
\end{align*}
\endgroup
since $\Delta g=0$. Then
\[  g_{n+1}(x)=g_{n}(x)g_{1}(x). \]  
The last result is trivial for $n=1$. Applying induction we get what we wanted.
\end{proof}
Thus, any $g$-polynomial is just the composition of a classic polynomial with $g$. All $g$-poly\-no\-mi\-als are of the form $p(g(x))$ where $p(x)=\sum_{k=0}^na_kx^k$, $x\in\mathbb{R}$.
%
\subsubsection{$g$-Monomials in the discontinuous case}


We will suppose now that $g$ is a derivator such that $g^C=0$ ($g^B=g$). The latter will allow us to compute the $g$-monomials explicitely. 
\begin{pro}
\label{gmondiscontinua}
Let $g:\mathbb{R}\to\mathbb{R}$ be a derivator such that $g^C=0$. Fix some $x_0\in\mathbb{R}$. Given $n\in\mathbb N$, if $x<x_0$, we denote 

\[  I^n_x=\{\sigma_x: D_g\cap [x,x_0)\to \{0,\dots,n\} \ |\ \sum_{y\in D_g\cap[x,x_0)}\sigma_x(y)=n \}.\]  
Then,
\begin{equation}\label{ssad55332}\frac{g_n(x)}{n!}=(-1)^n\sum_{\sigma_x\in I_x^n} \prod_{y\in D_g\cap[x,x_0)} \Delta g(y)^{\sigma_x(y)}.\end{equation}
If $x>x_0$, define
\[  J_x^n=\{\sigma_x: D_g\cap [x_0,x)\to \{0,1\} \ |\ \sum_{y\in D_g\cap[x_0,x)}\sigma_x(y)=n \}.\]  
Then,
\begin{equation}\label{ssasd55332}\frac{g_n(x)}{n!}=\sum_{\sigma_x\in J_x^n} \prod_{y\in D_g\cap[x_0,x)} \Delta g(y)^{\sigma_x(y)}.\end{equation}
Note that $J_x^n$ represents all possible subsets of $n$ elements in $D_g\cap [x_0,x)$. In particular, we have that, if ${\left\lvert D_g\cap [x_0,x)\right\rvert<n}$, $g_n(x)=0$.
\end{pro}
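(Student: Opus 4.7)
My plan is to proceed by induction on $n$, exploiting that the hypothesis $g^C=0$ makes $\mu_g$ purely atomic: by Proposition~\ref{sumamedidas} applied to the splitting $g=g^C+g^B$ together with the fact that $\mu_{g^C}\equiv 0$ when $g^C\equiv 0$, we obtain that for any bounded $g$-measurable $f$ and any bounded interval $I$,
\[\int_I f\operatorname{d}\mu_g=\sum_{y\in I\cap D_g} f(y)\,\Delta g(y),\]
with absolute convergence since $\sum_{y\in I\cap D_g}\Delta g(y)\le g(\sup I)-g(\inf I)<\infty$. The base case $n=1$ is immediate: $g_1(x)=g(x)-g(x_0)$ telescopes to $\sum_{y\in D_g\cap[x_0,x)}\Delta g(y)$ for $x>x_0$ and to $-\sum_{y\in D_g\cap[x,x_0)}\Delta g(y)$ for $x<x_0$, matching~\eqref{ssasd55332} and~\eqref{ssad55332}.

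For the inductive step with $x>x_0$, the recursion $g_n(x)=n\int_{[x_0,x)}g_{n-1}\operatorname{d}\mu_g$ combined with the atomic reduction gives
\[\frac{g_n(x)}{n!}=\sum_{y\in D_g\cap[x_0,x)}\frac{g_{n-1}(y)}{(n-1)!}\Delta g(y).\]
By induction, $g_{n-1}(y)/(n-1)!$ is the sum over $(n-1)$-element subsets of $D_g\cap[x_0,y)$ of the product of the $\Delta g$-weights. Pairing such a subset with the point $y$ produces an $n$-element subset of $D_g\cap[x_0,x)$ whose maximum is $y$; this assignment is a bijection, since the inverse simply extracts the largest element of any $n$-subset. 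Summing recovers~\eqref{ssasd55332}.

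The main obstacle is the case $x<x_0$, where~\eqref{ssad55332} allows $\sigma_x$ to take values up to $n$ rather than just $\{0,1\}$. This asymmetry arises because, when expanding $g_{n-1}(y)$ for $y\in D_g\cap[x,x_0)$ via the recursion, one integrates over $[y,x_0)$, which \emph{includes} the atom at $y$, so the same point of $D_g$ can be visited repeatedly across nested iterations. Applying the inductive hypothesis yields
\[\frac{g_n(x)}{n!}=-\sum_{y\in D_g\cap[x,x_0)}\frac{g_{n-1}(y)}{(n-1)!}\Delta g(y)=(-1)^n\sum_{y\in D_g\cap[x,x_0)}\Delta g(y)\sum_{\tau\in I_y^{n-1}}\prod_{z\in D_g\cap[y,x_0)}\Delta g(z)^{\tau(z)},\]
and the remaining task is to exhibit a bijection between the outer pairs $(y,\tau)$ and $I_x^n$. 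I would use the map $(y,\tau)\mapsto\sigma_x$ defined by $\sigma_x(z)=0$ for $z<y$, $\sigma_x(y)=\tau(y)+1$, and $\sigma_x(z)=\tau(z)$ for $z>y$; its inverse sends $\sigma_x\in I_x^n$ to $(y^*,\tau)$ with $y^*=\min\{z\in D_g\cap[x,x_0):\sigma_x(z)\ge 1\}$ and $\tau$ the restriction of $\sigma_x$ to $D_g\cap[y^*,x_0)$ with the value at $y^*$ decreased by one. Under this correspondence the product $\Delta g(y)\prod_{z\in[y,x_0)}\Delta g(z)^{\tau(z)}$ equals $\prod_{z\in[x,x_0)}\Delta g(z)^{\sigma_x(z)}$, giving~\eqref{ssad55332}.
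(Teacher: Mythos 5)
Your proposal is correct and follows essentially the same route as the paper: induction on $n$, reduction of the Lebesgue--Stieltjes integral to a finite sum over the atoms of $D_g$ (since $g^C=0$ and the discontinuities are isolated), and a term-by-term bijection that, for $x>x_0$, strips the largest element of an $n$-subset and, for $x<x_0$, decrements the exponent at the leftmost point of $D_g\cap[x,x_0)$ carrying a nonzero exponent --- exactly the correspondence the paper sets up with its multi-index notation $I_x^{n*}$, $J_x^{n*}$. Your explanation of why the left side admits repeated exponents (the atom at $y$ lies in the integration interval $[y,x_0)$) is precisely the mechanism underlying the paper's observation that $m_{x_k}=k$ versus $m_{x_k}=k-1$.
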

\begin{proof}
	{Assume $D_g$ is a set of isolated points. The general case is achieved combining Proposition~\ref{glimite} with the isolated points case}. Since $g^C=0$, we have that 
	\[  
	g_1(x)=\begin{dcases}
		\sum_{s\in[x_0,x)}\Delta g(s), & x>x_0,\\
		-\sum_{s\in[x,x_0)}\Delta g(s), & x\leq x_0,
	\end{dcases}
	\]  
	so the formula is true for $n=1$. Suppose that it also works for some arbitrary $n\in\mathbb N$ and proceed by induction. Take $x\in\mathbb{R}$ such that $x<x_0$. Since discontinuity points are a set of isolated points, there are only a finite number of them in $[x,x_0)$. Let $\{x_k\}_{k=1}^{m_x}=D_g\cap[x,x_0)$ ordered from highest to lowest, where $m_x=\left\lvert D_g\cap[x,x_0)\right\rvert\in\mathbb N$. Note that the right hand side of the equality~\eqref{ssad55332} can be written as
	\[  (-1)^n\sum_{j\in I_x^{n*}} \Delta g(x_1)^{j_1}\Delta g(x_2)^{j_2}\cdots\Delta g(x_{m_x})^{j_{m_x}},\]  
	where 
	\[  I_x^{n*}=\{j\in\{0,\dots,n\}^{m_x} \ | \ \sum_{k=1}^{m_x} j_k=n \}.\]  
	Then,
	{
		\begin{align}
	\nonumber		\frac{ g_{n+1}(x)}{(n+1)!}&=-\frac{1}{n!}\int_{[x,x_0)}g_n\operatorname{d}\mu_g=-\frac{1}{n!}\sum_{k=1}^{ m_x} g_n(x_k)\Delta g(x_k)\\
	\label{18974}		&=(-1)^{n+1}\sum_{k=1}^{m_x}\left(\sum_{j\in I_{x_k}^{n*}} \Delta g(x_1)^{j_1}\Delta g(x_2)^{j_2}\cdots\Delta g(x_k)^{j_k}\right)\Delta g(x_k)\\
	\label{3215555}		&=(-1)^{n+1}\sum_{j\in I_x^{n+1*}} \Delta g(x_1)^{j_1}\Delta g(x_2)^{j_2}\cdots\Delta g(x_{m_x})^{j_{m_x}},
		\end{align}

	as there is a one to one correspondence of addends in~\eqref{18974} and addends in~\eqref{3215555}. Note that $m_{x_k}=k$, for $k\in\{1,\dots,m_x\}$.}
%

	Take $x\in\mathbb{R}$ such that $x>x_0$. Let $\{x_k\}_{k=1}^{m_x}=D_g\cap[x_0,x)$ be ordered from lowest to highest, where $m_x=\left\lvert D_g\cap[x_0,x)\right\rvert\in\mathbb N$. We will apply induction for $n\in\mathbb N$. The right side of the equality~\eqref{ssasd55332} can be written as
	\begin{equation*}
		\sum_{j\in J_x^{n*}} \Delta g(x_1)^{j_1}\Delta g(x_2)^{j_2}\cdots\Delta g(x_{m_x})^{j_{m_x}},
\end{equation*} 
	where
	\[  J_x^{n*}=\{j\in\{0,1\}^{m_x} \ | \ \sum_{k=1}^{m_x} j_k=n \}.\]  
	Then,
	{\begin{equation*}
		\begin{aligned}
			\frac{ g_{n+1}(x)}{(n+1)!}&=\frac{1}{n!}\int_{[x_0,x)}g_n\operatorname{d}\mu_g=\frac{1}{n!}\sum_{k=1}^{m_x} g_n(x_k)\Delta g(x_k)\\
			&=\sum_{k=1}^{m_x}\left(\sum_{j\in J_{x_k}^{n*}} \Delta g(x_1)^{j_1}\Delta g(x_2)^{j_2}\cdots\Delta g(x_{k-1})^{j_{k-1}}\right)\Delta g(x_k)\\
			&=\sum_{j\in J_x^{n+1*}} \Delta g(x_1)^{j_1}\Delta g(x_2)^{j_2}\cdots\Delta g(x_{m_x})^{j_{m_x}},
		\end{aligned}
	\end{equation*}
	as the same one to one correspondence holds again.} Here, $m_{x_k}=k-1$, for $k\in\{1,\dots,m_x\}$. Note that, if we assume that there are $m_x$ discontinuities in $[x_0,x)$, for $n\geq m_x+1$, $g_n(y)=0$ for $y\in[x_0,x]$.
\end{proof}

\subsection{Derivator approximation}
\label{1286}

We will prove, in Theorem~\ref{ggcgb}, that, in fact, the $g$-monomials are a combination of products of $g^C$ and $g^B$-monomials. But, for that, it will be necessary for us to be able to approximate any derivator by derivators of which the set of discontinuity points is a set of isolated points. 

Let $g:\mathbb{R}\to\mathbb{R}$ be a derivator. Given $m\in\mathbb N$, we denote $D_g^m=\{x\in\mathbb{R}: \Delta g(x)\geq \frac{1}{m}\}$. Clearly, we have that
\[  D_g=\bigcup_{m\in\mathbb N}D_g^m.\]  
Note that, in fact, $D_g^m$ is a set of isolated points. One way of seeing this is that for all $a,b\in\mathbb{R}$ such that $a<b$, $D_g^m\cap[a,b)$ is a finite set of points. This is easy to prove. Suppose it is not finite, then
\[  \infty\leq\sum_{t\in D_g^m\cap[a,b)}\Delta g(t)\leq \sum_{t\in[a,b)}\Delta g(t).\]  
However, we know that the previous sum is convergent and bounded by $g(b)-g(a)$. Define $g^{B,m}:\mathbb{R}\to\mathbb{R}$ given by
\[  g^{B,m}(x)=\begin{dcases}
	\sum_{t\in D_g^m\cap[0,x)}\Delta g(t), & x>0,\\
	-\sum_{t\in D_g^m\cap[x,0)}\Delta g(t), & x\leq0.
\end{dcases}
\]  
In particular, $g^{B,m}$ is left-continuous, nondecreasing and $g$-continuous. Given $m\in\mathbb N$, we define the $g$-continuous derivator 
\begin{equation}
	\label{gm}
	g^m=g^C+g^{B,m}.
\end{equation}
We have that, for any $x\in\mathbb{R}$,
\[  g(x)-g^m(x)=g^B(x)-g^{B,m}(x)=\begin{dcases}
	\sum_{\substack{t\in [0,x)\\ \Delta g(t)<\frac{1}{m} }}\Delta g(t), & x>0,\\
	-\sum_{\substack{t\in [x,0)\\ \Delta g(t)<\frac{1}{m} }}\Delta g(t), & x\leq0,
\end{dcases}\]  
and, again, $g(x)-g^m(x)$ is a $g$-continuous derivator. Note that, in particular, given $R\in\mathbb{R}$ such that $0<R$, for all $x\in[-R,R]$,
\[  \left\lvert g(x)-g^m(x)\right\rvert\leq\sum_{\substack{t\in [-R,R)\\ \Delta g(t)<\frac{1}{m} }}\Delta g(t).\]  
Proving that the above tends to zero when $m$ tends to infinity, we would obtain that ${g^m\to g}$ uniformly on $[-R,R]$.
\begin{pro}
	Let $g:\mathbb{R}\to\mathbb{R}$ be a derivator. For any $a,b\in\mathbb{R}$ such that $a<b$, we have 
	\[  \lim_{m\to\infty}\sum_{\substack{t\in [a,b)\\ \Delta g(t)<\frac{1}{m} }}\Delta g(t)=0.\]  
\end{pro}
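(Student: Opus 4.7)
My plan is to exploit the fact that the full sum $\sum_{t\in[a,b)}\Delta g(t)$ is a convergent series of non-negative terms, and then exhibit the quantity in question as the tail of that convergent series.

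More precisely, the first step is to record that $\sum_{t\in[a,b)}\Delta g(t)\leq g(b)-g(a)<\infty$. This can be justified either directly from the monotonicity of $g$ (by ordering any finite subset of discontinuities and telescoping), or by invoking the measure-theoretic identity $\sum_{t\in[a,b)}\Delta g(t)=\mu_g(D_g\cap[a,b))\leq\mu_g([a,b))=g(b)-g(a)$. Since the sum is convergent, the set $D_g\cap[a,b)$ is countable and the terms $\Delta g(t)$ form a summable family.

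The second step is to set $T_m:=D_g^m\cap[a,b)=\{t\in[a,b):\Delta g(t)\geq 1/m\}$. As already noted in the excerpt, each $T_m$ is finite (otherwise an infinite sub-sum of terms each $\geq 1/m$ would blow up, contradicting finiteness of the total sum). Moreover, $T_m\subset T_{m+1}$ and $\bigcup_{m\in\mathbb N}T_m=D_g\cap[a,b)$.

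The third step is to write
\[
\sum_{\substack{t\in[a,b)\\ \Delta g(t)<1/m}}\Delta g(t)
=\sum_{t\in D_g\cap[a,b)}\Delta g(t)-\sum_{t\in T_m}\Delta g(t),
\]
and conclude by the monotone convergence theorem (applied to the counting measure on $D_g\cap[a,b)$ with the function $t\mapsto\Delta g(t)\,\chi_{T_m}(t)$), so the second sum on the right increases to the first as $m\to\infty$. Equivalently, this is just the statement that the tail of a convergent series of non-negative terms vanishes. There is no real obstacle here; the only thing to be a little careful about is the initial justification that $\sum_{t\in[a,b)}\Delta g(t)$ is finite, but this follows immediately once one observes that any finite partial sum of jumps is dominated by $g(b)-g(a)$ by the monotonicity of $g$.
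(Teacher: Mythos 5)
Your proof is correct and follows essentially the same route as the paper: both arguments rest on the finiteness of $\sum_{t\in[a,b)}\Delta g(t)\le g(b)-g(a)$ and the observation that the small-jump sum is a vanishing tail of this convergent non-negative series. The paper phrases this via an explicit enumeration $\{t_n\}$ and an $\varepsilon$--$n_0$--$m_0$ argument, whereas you phrase it via complementation against the finite sets $T_m$ and monotone convergence, but the underlying idea is identical.
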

\begin{proof}
	If $D_g\cap[a,b)$ is finite, the above is trivial. Assume $D_g\cap [a,b)$ is infinite, let $\{t_n\}_{n \in\mathbb N}=D_g\cap [a,b)$. Fix some $\varepsilon>0$, since the sum of the jumps is convergent, there exists $n_0\in\mathbb N$ such that \[  \sum_{n=n_0}^\infty \Delta g(t_n)<\varepsilon.\]  Choose $m_0\in\mathbb N$ so that $\frac{1}{m_0}\leq\Delta g(t_n)$ for $n=1,\dots, n_0$. Then, for $m\geq m_0$,
	\[  0\leq\sum_{\substack{t\in [a,b)\\ \Delta g(t)<\frac{1}{m} }}\Delta g(t)\leq\sum_{n=n_0}^\infty \Delta g(t_n)<\varepsilon.\qedhere\]  
\end{proof}

\begin{pro}
	\label{glimite}
	Let $g:\mathbb{R}\to\mathbb{R}$ be a derivator and fix some $x_0\in\mathbb{R}$. Fix $R>0$, for every $n\in\mathbb N$, $g_n^m\to g_n$ 	uniformly on $[x_0-R,x_0+R]$.
\end{pro}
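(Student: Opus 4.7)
The plan is to prove this by induction on $n$. The base cases $n = 0$ (both functions are the constant $1$) and $n = 1$ (where the difference equals $(g - g^m)(x) - (g - g^m)(x_0)$ and thus is uniformly controlled by $(g - g^m)(x_0+R) - (g - g^m)(x_0-R)$, which tends to $0$ by the preceding proposition since $g - g^m = g^B - g^{B,m}$ is itself a nondecreasing function whose total mass on $[-R',R']$ vanishes) pose no difficulty.

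For the inductive step, the key identity is that $g - g^m$ is a derivator, so by Proposition~\ref{sumamedidas} (applied to Borel sets) we have $\mu_g = \mu_{g^m} + \mu_{g-g^m}$. For $x \geq x_0$ this yields the telescoping
\[
g_{n+1}(x) - g_{n+1}^m(x) = (n+1)\int_{x_0}^x (g_n - g_n^m)\,\operatorname{d}\mu_g + (n+1)\int_{x_0}^x g_n^m\,\operatorname{d}\mu_{g - g^m},
\]
and an analogous decomposition holds for $x < x_0$. The first integral is bounded in absolute value by $(n+1)\,\|g_n - g_n^m\|_{\infty,[x_0-R,x_0+R]}\cdot(g(x_0+R) - g(x_0-R))$, which tends to zero by the inductive hypothesis. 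The second integral is bounded by $(n+1)\,\|g_n^m\|_{\infty,[x_0-R,x_0+R]}\cdot\mu_{g-g^m}([x_0-R,x_0+R])$, and this mass tends to $0$ uniformly in everything else.

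The step that needs care is bounding $\|g_n^m\|_\infty$ on $[x_0-R, x_0+R]$ uniformly in $m$. For this I would invoke Corollary~\ref{cotas}: on $[x_0,x_0+R]$ we have $0 \leq g_n^m(x) \leq g_1^m(x)^n \leq (g^m(x_0+R) - g^m(x_0))^n \leq (g(x_0+R) - g(x_0))^n$ since $g^m \leq g$, and on $[x_0-R, x_0)$ we have $|g_n^m(x)| \leq n!\,|g_1^m(x)|^n \leq n!\,(g(x_0) - g(x_0-R))^n$. Both bounds are finite and independent of $m$, giving a uniform constant $M_n$ depending only on $n$, $R$ and $g$.

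The main (very mild) obstacle is simply keeping careful track of the split into the regions $x \geq x_0$ and $x < x_0$, since the defining integral for $g_n$ changes sign and the natural bounds on $g_n^m$ differ on the two sides; but once the uniform-in-$m$ pointwise bound $M_n$ is in hand, both terms in the decomposition go to $0$ uniformly, closing the induction.
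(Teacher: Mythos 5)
Your proof is correct and follows essentially the same route as the paper: induction on $n$, an add-and-subtract decomposition based on $\mu_g=\mu_{g^m}+\mu_{g-g^m}$, and a uniform-in-$m$ bound on one of the two resulting factors; the only difference is that you pair $g_n-g_n^m$ with $\mu_g$ and $g_n^m$ with $\mu_{g-g^m}$ (so you need $\sup_m\lVert g_n^m\rVert_\infty<\infty$ via Corollary~\ref{cotas}), whereas the paper pairs $g_n^m-g_n$ with $\mu_{g^m}$ and $g_n$ with $\mu_g-\mu_{g^m}$ (so it only needs the uniform bound on $g_1^m$, obtained from the already-established uniform convergence $g_1^m\to g_1$). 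One tiny imprecision: the inequality $g^m(x_0+R)-g^m(x_0)\le g(x_0+R)-g(x_0)$ should be justified not by ``$g^m\le g$'' (which fails for $x\le 0$, where $g^{B,m}\ge g^B$) but by the fact that $g-g^m$ is nondecreasing, i.e.\ increments of $g^m$ are dominated by increments of $g$; the conclusion you draw is nevertheless correct.
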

\begin{proof}
	Note that we have just proved the case for $n=1$. Recall that $g-g^m$ is a derivator, so Proposition~\ref{sumamedidas} applies. Suppose the result is true for some $n\in\mathbb N$ and denote $A=[x_0-R,x_0+R]$. For any $x\in A$,
	\begin{equation*}
		\begin{aligned}
			&\frac{\left\lvert g_{n+1}^m(x)-g_{n+1}(x)\right\rvert}{n+1}=\left\lvert \int_{x_0}^xg_n^m\operatorname{d}\mu_{g^m}-\int_{x_0}^xg_n\operatorname{d}\mu_g\right\rvert\\[  .5em]
			=&\left\lvert \int_{x_0}^xg_n^m\operatorname{d}\mu_{g^m}- \int_{x_0}^xg_n\operatorname{d}\mu_{g^m}+ \int_{x_0}^xg_n\operatorname{d}\mu_{g^m}-\int_{x_0}^xg_n\operatorname{d}\mu_g\right\rvert\\[  .5em]
			=&\left\lvert \int_{x_0}^x(g_n^m-g_n)\operatorname{d}\mu_{g^m}- \int_{x_0}^xg_n\operatorname{d}(\mu_g-\mu_{g^m})\right\rvert\leq\int_{x_0}^x\left\lvert g_n^m-g_n\right\rvert\operatorname{d}\mu_{g^m}+ \int_{x_0}^x\left\lvert g_n\right\rvert\operatorname{d}(\mu_g-\mu_{g^m})\\[  .5em]
			\leq&\sup_{A}\left\lvert g_n^m-g_n\right\rvert\left\lvert g^m_1(x)\right\rvert+ \sup_{A}\left\lvert g_n\right\rvert\left\lvert g_1(x)-g^m_1(x)\right\rvert.
		\end{aligned}
	\end{equation*}
	Since $g^m_1$ is bounded in $A$ for every $m\in\mathbb{N}$ and $\{g^m_1\}_{m\in\mathbb N}$ converges uniformly to $g_1$, $\{g^m_1\}_{m\in\mathbb N}$ is uniformly bounded. Choose some $M>\left\lvert g^m_1(x)\right\rvert$ for all $x\in A$ and $m\in\mathbb N$. Then,
	\[  \sup_{A}\left\lvert g_{n+1}^m-g_{n+1}\right\rvert
\leq (n+1)\left(M\sup_{A}\left\lvert g_n^m-g_n\right\rvert+ \sup_{A}\left\lvert g_n\right\rvert\sup_{A}\left\lvert g_1-g_1^m\right\rvert\right).\]  
	Hence, if $C= \sup\limits_{A}\left\lvert g_n\right\rvert$,
	\[  \sup_{A}\left\lvert g_{n+1}^m-g_{n+1}\right\rvert\leq(n+1)\left( M\sup_{A}\left\lvert g_n^m-g_n\right\rvert+ C\sup_{A}\left\lvert g_1-g_1^m\right\rvert\right)\to 0,\]  
	when $m$ tends to infinity.
\end{proof}

We have now a way of approximating the monomials of any derivator from monomials of derivators with a finite number of discontinuities on bounded sets. We will prove that any monomial of any derivator $g$ is in fact a combination of $g^C$ and $g^B$-monomials. For that, a couple of results will be needed.


\begin{pro}
	\label{asd23a15d}
{ Let $g:\mathbb{R}\to\mathbb{R}$ be a derivator, $X\in \mathcal{M}_g$ and $f\in\mathcal{L}^1_{g}(X,\mathbb{R})$. Then,	
}	
	\[  \int_X f\operatorname{d}\mu_{g^B}=\sum_{t\in X\cap D_g}f(t)\Delta g(t).\]  
\end{pro}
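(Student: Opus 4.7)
The strategy is to show that $\mu_{g^B}$ is a purely atomic measure whose atoms are exactly the points of $D_g$ with respective mass $\Delta g(t)$. Once this is established, integration against $\mu_{g^B}$ collapses into a sum over $D_g$, which is precisely the claimed identity. The proof will then proceed by the standard escalator: verify the formula for characteristic functions of Borel sets, extend by linearity to simple functions, then by monotone convergence to nonnegative measurable functions, and finally by splitting $f=f^{+}-f^{-}$ for general integrable $f$.

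The key geometric fact, which I would establish first, is that $\mu_{g^B}(\{t\})=\Delta g(t)$ for every $t\in\mathbb{R}$, and $\mu_{g^B}(\mathbb{R}\setminus D_g)=0$. The first equality follows from the observation after Theorem~\ref{carat} applied to the derivator $g^B$: namely, $\mu_{g^B}(\{t\})=\Delta g^B(t)$, and since $g^C$ is continuous, $\Delta g^B=\Delta g$. In particular, $\mu_{g^B}(\{t\})=0$ whenever $t\notin D_g$. To see that $\mu_{g^B}(\mathbb{R}\setminus D_g)=0$, note that on any bounded interval $[a,b)$, the bound $\sum_{t\in[a,b)}\Delta g(t)\le g(b)-g(a)$ forces $D_g$ to be at most countable. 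Then Theorem~\ref{carat} together with the explicit formula~\eqref{gb} gives
\[
\mu_{g^B}\bigl([a,b)\bigr)=g^B(b)-g^B(a)=\sum_{t\in[a,b)\cap D_g}\Delta g(t)=\mu_{g^B}\bigl([a,b)\cap D_g\bigr),
\]
where the last equality is $\sigma$-additivity applied to the countable set of atoms. Hence $\mu_{g^B}([a,b)\setminus D_g)=0$, and a countable exhaustion of $\mathbb{R}$ by such intervals concludes the claim.

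With this in hand, the formula for indicators $f=\chi_A$ with $A\in\mathcal{B}$, $A\subset X$, reads
\[
\int_X\chi_A\,\mathrm{d}\mu_{g^B}=\mu_{g^B}(A)=\mu_{g^B}(A\cap D_g)=\sum_{t\in A\cap D_g}\Delta g(t)=\sum_{t\in X\cap D_g}\chi_A(t)\,\Delta g(t),
\]
and linearity extends it to simple functions. The hypothesis that $f^{-1}$ carries Borel sets onto Borel sets guarantees $g^B$-measurability (indeed, measurability with respect to every derivator) and, crucially, ensures that the pointwise values $f(t)$ for $t\in D_g$ are genuinely well defined, so the right-hand side of the claimed identity makes sense. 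Writing $|f|$ as an increasing limit of nonnegative simple $g^B$-measurable functions and invoking monotone convergence yields
\[
\int_X|f|\,\mathrm{d}\mu_{g^B}=\sum_{t\in X\cap D_g}|f(t)|\,\Delta g(t);
\]
both sides are finite, because Proposition~\ref{sumamedidas} (restricted to Borel sets) gives $\mu_{g^B}\le\mu_g$, so $f\in\mathcal{L}^1_{\mu_g}(X,\mathbb{R})$ implies $f\in\mathcal{L}^1_{\mu_{g^B}}(X,\mathbb{R})$. A final splitting $f=f^{+}-f^{+}$ and another application of monotone convergence to each piece delivers the identity.

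I do not anticipate a serious obstacle here: the entire argument is bookkeeping around the fact that $\mu_{g^B}$ is a purely discrete measure. The only subtle point is the interplay between pointwise and almost-everywhere statements, which is the reason the hypothesis on $f^{-1}$ has been imposed; it sidesteps any ambiguity in the values of $f$ on the countable set $D_g$, where the measure $\mu_{g^B}$ concentrates its entire mass.
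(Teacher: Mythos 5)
Your proof is correct. Note, however, that the paper does not actually prove Proposition~\ref{asd23a15d}: it simply remarks that the result ``is essentially \cite[Lemma 2.3]{Onfirst}'' and moves on, so there is no internal argument to compare against. Your self-contained route --- establishing that $\mu_{g^B}$ is purely atomic with $\mu_{g^B}(\{t\})=\Delta g(t)$ and $\mu_{g^B}(\mathbb{R}\setminus D_g)=0$ via the identity $\mu_{g^B}([a,b))=g^B(b)-g^B(a)=\sum_{t\in[a,b)\cap D_g}\Delta g(t)$ from Theorem~\ref{carat} and~\eqref{gb}, then running the standard escalator through indicators, simple functions, monotone convergence, and $f=f^{+}-f^{-}$ --- is exactly the argument one would expect, and every step checks out; the countability of $D_g$ and the domination $\mu_{g^B}\le\mu_g$ from Proposition~\ref{sumamedidas} (which secures both $\sigma$-additivity over the atoms and the finiteness of the resulting sum) are the only nontrivial ingredients, and you supply both. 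Two cosmetic remarks: the splitting at the end should read $f=f^{+}-f^{-}$ rather than $f=f^{+}-f^{+}$, and the final step is an application of the already-proved nonnegative case to each part rather than a fresh invocation of monotone convergence. Neither affects the validity of the argument.
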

{\begin{proof} The proof is a direct consequence of \cite[Lemma 2.3]{Onfirst} since 
$\mathcal{L}^1_{g}(X,\mathbb{R})\subset \mathcal{L}^1_{g^B}(X,\mathbb{R})$.
\end{proof}
}
\begin{lem}
	Let $g:\mathbb{R}\to\mathbb{R}$ be a derivator such that $D_g$ is a set of isolated points and fix some $x_0\in\mathbb{R}$. Then, for all $n\in\mathbb N$, $m\in\mathbb N$ and $a,b\in\mathbb{R}$ such that $a<b$,
	\[  m\int_{[a,b)}g_n^Cg_{m-1}^B\operatorname{d}\mu_{g^B}+n\int_{[a,b)}g_{n-1}^Cg_{m}^B\operatorname{d}\mu_{g^C}=g_m^B(b)g_n^C(b)-g_m^B(a)g_n^C(a).\]  
\end{lem}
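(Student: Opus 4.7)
The proof applies the integration-by-parts formula (Lemma~\ref{formulapartes}) with the derivator $g=g^C+g^B$ and the functions $w_1=g_n^C$, $w_2=g_m^B$. Since $g_n^C$ and $g_m^B$ are defined via iterated Lebesgue--Stieltjes integrals against $\mu_{g^C}$ and $\mu_{g^B}$, and since $\mu_{g^C},\mu_{g^B}\le\mu_g$ by Proposition~\ref{sumamedidas}, both functions belong to $\mathcal{AC}_g([a,b],\mathbb{R})$, so the formula is available.

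The key step is to identify the $g$-derivatives of $w_1$ and $w_2$ and to observe that they have disjoint supports. Because $g^C$ is continuous, $g_n^C$ is continuous as well, so at every $t\in D_g$ one has $(g_n^C)'_g(t)=(g_n^C(t^+)-g_n^C(t))/\Delta g(t)=0$. For $t\in[a,b]\setminus D_g$, the hypothesis that $D_g$ is a set of isolated points yields a neighborhood of $t$ on which $g^B$ is constant; hence $g$ and $g^C$ differ only by an additive constant there, and $(g_n^C)'_g(t)=(g_n^C)'_{g^C}(t)=n g_{n-1}^C(t)$ by Remark~\ref{monomiosderob}. Symmetrically, since $\Delta g^C\equiv 0$ implies $\Delta g=\Delta g^B$, one obtains $(g_m^B)'_g(t)=m g_{m-1}^B(t)$ for $t\in D_g$ and $(g_m^B)'_g(t)=0$ for $t\in[a,b]\setminus D_g$.

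Plugging these formulas into Lemma~\ref{formulapartes} and splitting $\mu_g=\mu_{g^C}+\mu_{g^B}$ via Proposition~\ref{sumamedidas} produces three terms. The cross term $\int_{[a,b)}(g_n^C)'_g(g_m^B)'_g\Delta g\,d\mu_g$ vanishes because $(g_n^C)'_g$ is supported off $D_g$ while $(g_m^B)'_g$ is supported on $D_g$, so their pointwise product is identically zero. In $\int_{[a,b)}(g_n^C)'_g\,g_m^B\,d\mu_g$ the $\mu_{g^B}$-contribution vanishes (integrand zero on $D_g$, the support of $\mu_{g^B}$), and the $\mu_{g^C}$-contribution equals $n\int_{[a,b)}g_{n-1}^C g_m^B\,d\mu_{g^C}$ (since $\mu_{g^C}(D_g)=0$); dually $\int_{[a,b)}g_n^C(g_m^B)'_g\,d\mu_g$ collapses to $m\int_{[a,b)}g_n^C g_{m-1}^B\,d\mu_{g^B}$. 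Combining yields the claimed identity.

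The principal obstacle is the careful identification of $(g_n^C)'_g$ on $[a,b]\setminus D_g$: the argument depends crucially on the isolated-points hypothesis, which is precisely what guarantees that $g^B$ is locally constant there and therefore allows us to replace the $g$-derivative of $g_n^C$ by its $g^C$-derivative on the complement of $D_g$.
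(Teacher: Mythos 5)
Your proof is correct, but it takes a genuinely different route from the paper's. The paper computes both integrals directly: it enumerates the finitely many points of $D_g\cap[a,b)$, evaluates $\int_{[a,b)}g_n^Cg_{m-1}^B\operatorname{d}\mu_{g^B}$ as a finite sum of jump contributions via Proposition~\ref{asd23a15d}, evaluates the $\mu_{g^C}$-integral interval by interval using that $g_m^B$ is constant between consecutive discontinuities, and then telescopes the two finite sums against each other. You instead apply the integration-by-parts formula of Lemma~\ref{formulapartes} once, with $w_1=g_n^C$, $w_2=g_m^B$ and the \emph{full} derivator $g$, and observe that the two derivatives $(g_n^C)'_g$ and $(g_m^B)'_g$ live on complementary sets ($D_g^c$ and $D_g$ respectively), which kills the cross term and collapses the remaining two integrals to exactly the two terms in the statement. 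This is cleaner and more conceptual; what the paper's hands-on computation buys in exchange is that it never has to identify $(g_n^C)'_g$ and $(g_m^B)'_g$ pointwise with respect to the mixed derivator $g$. Two small points you should make explicit: (i) the identification $(g_n^C)'_g=ng_{n-1}^C$ off $D_g$ only holds outside $C_g\cup N_g$ (where the $g$-derivative is defined by other limits), but this is harmless because that set is $\mu_g$-null and Lemma~\ref{formulapartes} only uses the derivatives $\mu_g$-a.e.; and (ii) the step ``$t\notin D_g$ implies $g^B$ is constant near $t$'' uses that $D_g\cap[a,b)$ is finite (equivalently, $D_g$ is closed and discrete), which is the same reading of ``set of isolated points'' that the paper itself uses, so you are on equal footing there.
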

\begin{proof}
	Since the set of discontinuity points is a set of isolated points, there must be a finite amount of them in $[a,b)$. Let $\{x_i\}_{i=1}^k=D_g\cap(a,b)$ ordered from lowest to highest and add $x_0=a$ and $x_{k+1}=b$. Thanks to Proposition~\ref{asd23a15d}, it follows that
	\begin{equation}
	\label{templabel2}
	m\int_{[a,b)}g_n^Cg_{m-1}^B\operatorname{d}\mu_{g^B}=m\sum_{i=0}^kg_n^C(x_i)g_{m-1}^B(x_i)\Delta g(x_i).
	\end{equation} 
	Let us calculate the {second integral from the statement of the lemma}. Note that $g^B_m$ is constant on $(x_{i-1},x_i)$ for ${i=1,\dots,k+1}$, because it is $g^B$-continuous. Since $g^C$ is continuous, singletons have null $\mu_{g^C}$-measure, hence,
	\begin{align*}
		n\int_{[a,b)}g_{n-1}^Cg_{m}^B\operatorname{d}\mu_{g^C}&=\sum_{i=1}^{k+1}n\int_{(x_{i-1},x_i)}g_{n-1}^Cg_{m}^B\operatorname{d}\mu_{g^C}=\sum_{i=1}^{k+1}g_{m}^B(x_i)n\int_{(x_{i-1},x_i)}g_{n-1}^C\operatorname{d}\mu_{g^C}\\[  .3em]
		&=\sum_{i=1}^{k+1}g_{m}^B(x_i)(g_n^C(x_i)-g_n^C(x_{i-1})).
	\end{align*}
	We have that , for $i=1,\dots,k+1$,
	\[  g_m^B(x_i)-g_m^B(a)=m\int_{[a,x_i)}g_{m-1}^B\operatorname{d} \mu_{g^B}=m\sum_{j=0}^{i-1}g_{m-1}^B(x_j)\Delta g(x_j).\]  
	We can compute now the second integral,
	\begin{equation}
	\begin{aligned}
	\label{templabel1}
		&n\int_{[a,b)}g_{n-1}^Cg_{m}^B\operatorname{d}\mu_{g^C}=\sum_{i=1}^{k+1}g_{m}^B(x_i)(g_n^C(x_i)-g_n^C(x_{i-1}))\\
		=&\sum_{i=1}^{k+1}\left(m\sum_{j=0}^{i-1}g_{m-1}^B(x_j)\Delta g(x_j)+g_m^B(a)\right)(g_n^C(x_i)-g_n^C(x_{i-1})) \\
		=& m\sum_{j=0}^{k}g_{m-1}^B(x_j)\Delta g(x_j)\sum_{i=j+1}^{k+1}(g_n^C(x_i)-g_n^C(x_{i-1})) + g_m^B(a)(g_n^C(b)-g_n^C(a)) \\
		=&m\sum_{j=0}^{k}g_{m-1}^B(x_j)\Delta g(x_j)(g_n^C(b)-g_n^C(x_{j}))+ g_m^B(a)(g_n^C(b)-g_n^C(a)) .
	\end{aligned}
	\end{equation} 
	Finally, adding~\eqref{templabel2} and~\eqref{templabel1} together, 
	\begin{align*}
		&m\sum_{j=0}^{k}g_{m-1}^B(x_j)\Delta g(x_j)g_n^C(b)+ g_m^B(a)(g_n^C(b)-g_n^C(a))\\
		=&(g_m^B(b)-g_m^B(a))g_n^C(b)+ g_m^B(a)(g_n^C(b)-g_n^C(a))=g_m^B(b)g_n^C(b)-g_m^B(a)g_n^C(a). 
	\end{align*}\end{proof}

We obtain the following straightforward corollary.
\begin{cor}
	\label{pedazocol}
	Let $g:\mathbb{R}\to\mathbb{R}$ be a derivator where the set of discontinuity points is a set of isolated points and fix some $x_0\in\mathbb{R}$. For all $n\in\mathbb N$, $m\in\mathbb N$ and $x\in\mathbb R$,
	\[  m\int_{x_0}^xg_n^Cg_{m-1}^B\operatorname{d}\mu_{g^B}+n\int_{x_0}^xg_{n-1}^Cg_{m}^B\operatorname{d}\mu_{g^C}=g_m^B(x)g_n^C(x).\]  
\end{cor}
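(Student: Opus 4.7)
The plan is to derive Corollary~\ref{pedazocol} as a direct specialization of Lemma~\ref{pedazolema}, splitting into cases according to the sign of $x-x_0$ and then using the signed-integral convention $\int_{x_0}^{x}=-\int_{[x,x_0)}$ when $x<x_0$. The only real input beyond the Lemma is the observation from Remark~\ref{monomiosderob} that every $g$-monomial centered at $x_0$ vanishes at $x_0$; in particular $g_m^B(x_0)=0$ and $g_n^C(x_0)=0$ whenever $m,n\ge 1$ (which is implicit in the statement, as otherwise $g_{m-1}^B$ or $g_{n-1}^C$ would not be defined).

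For $x\ge x_0$, I would apply Lemma~\ref{pedazolema} with $a=x_0$ and $b=x$. The boundary term $g_m^B(x_0)g_n^C(x_0)$ on the right-hand side vanishes by the remark above, and since $\int_{x_0}^{x}=\int_{[x_0,x)}$ in this range, the identity collapses directly to the statement of the corollary. The case $x=x_0$ is handled simultaneously: both sides are zero.

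For $x<x_0$, I would apply the Lemma with $a=x$ and $b=x_0$ instead. The right-hand side becomes $g_m^B(x_0)g_n^C(x_0)-g_m^B(x)g_n^C(x)=-g_m^B(x)g_n^C(x)$. Multiplying through by $-1$ and invoking $\int_{x_0}^{x}=-\int_{[x,x_0)}$ on each integral yields exactly
\[m\int_{x_0}^{x}g_n^Cg_{m-1}^B\operatorname{d}\mu_{g^B}+n\int_{x_0}^{x}g_{n-1}^Cg_m^B\operatorname{d}\mu_{g^C}=g_m^B(x)g_n^C(x).\]
Since the corollary is essentially bookkeeping on top of Lemma~\ref{pedazolema}, I do not expect any substantive obstacle; the only thing requiring attention is applying the signed-integral convention consistently on both integrals so that the sign change on the left matches the sign change on the right.
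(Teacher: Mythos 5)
Your proposal is correct and is exactly the argument the paper intends: the paper presents this as a "straightforward corollary" of Lemma~\ref{pedazolema} without writing out the proof, and the case split on the sign of $x-x_0$, the vanishing of $g_m^B$ and $g_n^C$ at $x_0$, and the signed-integral convention are precisely the bookkeeping steps needed. No gaps.
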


Finally, we have what is necessary to prove Theorem~\ref{ggcgb}.
\begin{thm}
	\label{ggcgb}
	Let $g:\mathbb{R}\to\mathbb{R}$ be a derivator and fix some $x_0\in\mathbb{R}$. For all $n\in\mathbb N$ and $x\in\mathbb{R}$,
	\[  g_n(x)=\sum_{k=0}^n {n\choose k} g_{k}^C(x) g_{n-k}^B(x).\]  
\end{thm}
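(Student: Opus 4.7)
The plan is to argue in two stages: first prove the identity under the extra hypothesis that $D_g$ consists of isolated points, by induction on $n$ using Corollary~\ref{pedazocol} as the key ingredient; then remove the hypothesis by approximating $g$ with the derivators $g^m=g^C+g^{B,m}$ constructed in Subsection~\ref{1286} and passing to the limit via Proposition~\ref{glimite}.

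For the induction, the base case $n=0$ is trivial since $g_{x_0,0}=g^C_{x_0,0}=g^B_{x_0,0}=1$. Assuming the identity at step $n$, I would start from
\[g_{n+1}(x)=(n+1)\int_{x_0}^{x}g_n\,\operatorname{d}\mu_g=(n+1)\int_{x_0}^{x}g_n\,\operatorname{d}(\mu_{g^C}+\mu_{g^B}),\]
using the additivity of Proposition~\ref{sumamedidas}, and substitute the induction hypothesis inside both integrals. The $\mu_{g^B}$-integrals are then rewritten by invoking Corollary~\ref{pedazocol} with parameters $n\mapsto k$ and $m\mapsto n-k+1$, which trades each $\int_{x_0}^x g_k^Cg_{n-k}^B\,\operatorname{d}\mu_{g^B}$ for a boundary term $\tfrac{1}{n-k+1}g_{n-k+1}^B(x)g_k^C(x)$ minus a correction integral against $\mu_{g^C}$. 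After reindexing the correction by $j=k-1$, the binomial identity $(j+1)\binom{n}{j+1}=(n-j)\binom{n}{j}$ shows it cancels all but the $k=n$ term of the original $\mu_{g^C}$-sum, leaving $\int_{x_0}^{x}g_n^C\,\operatorname{d}\mu_{g^C}=g^C_{n+1}(x)/(n+1)$. The boundary contributions assemble, via $\tfrac{\binom{n}{k}}{n-k+1}=\tfrac{1}{n+1}\binom{n+1}{k}$, into $\tfrac{1}{n+1}\sum_{k=0}^{n+1}\binom{n+1}{k}g_k^C(x)g_{n+1-k}^B(x)$, closing the induction. The case $x<x_0$ is handled identically once one uses the signed-integral convention $\int_{x_0}^x=-\int_{x}^{x_0}$ introduced after Example~\ref{55886}.

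For the general case, each $g^m$ satisfies $(g^m)^C=g^C$ and $(g^m)^B=g^{B,m}$, and $D_{g^m}=D_g^m$ is a set of isolated points, so the first stage yields
\[g^m_n(x)=\sum_{k=0}^{n}\binom{n}{k}g_k^C(x)\,(g^{B,m})_{n-k}(x)\]
for every $x\in\mathbb R$ and every $m\in\mathbb N$. Proposition~\ref{glimite} gives $g^m_n\to g_n$ uniformly on any compact interval centered at $x_0$, and the same proposition applied to the derivator $g^B$ (whose approximants are exactly the $g^{B,m}$) yields $(g^{B,m})_{n-k}\to g^B_{n-k}$ uniformly on compacts. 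Letting $m\to\infty$ on both sides produces the desired identity for $g$.

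The main obstacle is the bookkeeping in the induction step: one has to identify the correct reindexing so that the $\mu_{g^C}$-integrals created by Corollary~\ref{pedazocol} neatly cancel the ones produced by splitting $\mu_g$, leaving only the continuous boundary term $g^C_{n+1}(x)/(n+1)$. Once that cancellation is set up, the binomial arithmetic is routine and the limiting argument is a direct application of the approximation results already developed.
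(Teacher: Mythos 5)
Your proposal is correct and follows essentially the same route as the paper: an induction on $n$ for derivators with isolated discontinuities in which Corollary~\ref{pedazocol} converts the $\mu_{g^B}$-integrals into boundary terms whose $\mu_{g^C}$-corrections cancel the split sum (the paper organizes this by pairing the $\binom{n}{k}$ and $\binom{n}{k-1}$ terms rather than reindexing, but the algebra is identical), followed by the passage to a general derivator via $g^m=g^C+g^{B,m}$ and Proposition~\ref{glimite} applied both to $g$ and to $g^B$. No gaps.
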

\begin{proof}
	We will prove it first for derivators such that the set of discontinuity points is a set of isolated points. For $n=1$, it is obvious that
	\[  g_1(x)=g_1^C(x)+g_1^B(x),\]  
	for all $x\in\mathbb{R}$. Let us apply induction, suppose the result is true for some $n\in\mathbb N$. Thanks to Proposition~\ref{sumamedidas}, for any $x\in \mathbb R$, 
	\begin{equation}\label{6666666666}\begin{aligned}		
		\frac{g_{n+1}(x)}{n+1}=&\int_{x_0}^xg_n\operatorname{d}\mu_g=\int_{x_0}^x\sum_{k=0}^n {n\choose k} g_{k}^C g_{n-k}^B\operatorname{d}\mu_g=\sum_{k=0}^n {n\choose k}\int_{x_0}^x g_{k}^C g_{n-k}^B\operatorname{d}(\mu_{g^B}+\mu_{g^C})\\
=&\sum_{k=0}^n {n\choose k}\left(\int_{x_0}^x g_{k}^C g_{n-k}^B\operatorname{d}\mu_{g^B}+\int_{x_0}^x g_{k}^C g_{n-k}^B\operatorname{d}\mu_{g^C}\right)\\
=&
\sum_{k=1}^n {n\choose k}
\int_{x_0}^x g_{k}^C g_{n-k}^B\operatorname{d}\mu_{g^B} +
\sum_{k=0}^{n-1} {n\choose k} \int_{x_0}^x g_{k}^C g_{n-k}^B\operatorname{d}\mu_{g^C} \\&
+\int_{x_0}^x g_{n}^B\operatorname{d}\mu_{g^B}
+\int_{x_0}^x g_{n}^C \operatorname{d}\mu_{g^C}\\
=& 
\sum_{k=1}^n \left({n\choose k}
\int_{x_0}^x g_{k}^C g_{n-k}^B\operatorname{d}\mu_{g^B} +
{n\choose k-1} \int_{x_0}^x g_{k-1}^C g_{n-k+1}^B\operatorname{d}\mu_{g^C} 
\right) \\&
+\int_{x_0}^x g_{n}^B\operatorname{d}\mu_{g^B}
+\int_{x_0}^x g_{n}^C \operatorname{d}\mu_{g^C}
.
\end{aligned}
	\end{equation}
	Fix $k\in\{1,\dots,n\}$. Then, 
	\begin{equation}
		\label{77777777}
		{n\choose k}\int_{x_0}^x g_{k}^C g_{n-k}^B\operatorname{d}\mu_{g^B}+{n\choose k-1}\int_{x_0}^x g_{k-1}^C g_{n-k+1}^B\operatorname{d}\mu_{g^C}
	\end{equation}
	equals
	\[  
	\frac{1}{n+1}{n+1\choose k}\left( (n-k+1) \int_{x_0}^x g_{k}^C g_{n-k}^B\operatorname{d}\mu_{g^B}+k\int_{x_0}^x g_{k-1}^C g_{n-k+1}^B\operatorname{d}\mu_{g^C} \right).
	\]  
	Applying Corollary~\ref{pedazocol},~\eqref{77777777} equals
	\begin{equation}
		\label{777777777}
		\frac{1}{n+1}{n+1\choose k}g_k^C(x)g_{n+1-k}^B(x). 
	\end{equation}
	Splitting the sum~\eqref{6666666666} in terms like~\eqref{77777777} and thanks to~\eqref{777777777} we have that
	\begingroup
	\allowdisplaybreaks
	\begin{align*}
		\frac{g_{n+1}(x)}{n+1}&=\sum_{k=0}^n {n\choose k}\left(\int_{x_0}^x g_{k}^C g_{n-k}^B\operatorname{d}\mu_{g^B}+\int_{x_0}^x g_{k}^C g_{n-k}^B\operatorname{d}\mu_{g^C}\right)\\[  .3em]
		&=\frac{1}{n+1}\sum_{k=1}^n{n+1\choose k}g_k^C(x)g_{n+1-k}^B(x)+\int_{x_0}^xg_{n}^B\operatorname{d}\mu_{g^B}+\int_{x_0}^x g_{n}^C\operatorname{d}\mu_{g^C}\\[  .3em]
		&=\frac{1}{n+1}\sum_{k=1}^n{n+1\choose k}g_k^C(x)g_{n+1-k}^B(x)+\frac{g_{n+1}^B(x)}{n+1}+\frac{g_{n+1}^C(x)}{n+1}\\[  .3em]
		&=\frac{1}{n+1}\sum_{k=0}^{n+1}{n+1\choose k}g_k^C(x)g_{n+1-k}^B(x).
	\end{align*}
	\endgroup

	Hence, the result is true for derivators where the set of discontinuities is a set of isolated points.

	Now, let $g:\mathbb{R}\to\mathbb{R}$ be an arbitrary derivator. Take $g^m$ for $m\in\mathbb N$, like in~\eqref{gm}. We have that $D_{g^m}=D^m_g$, $(g^m)^B=g^{B,m}$ and then $g^m$ is derivator such that $D_{g^m}$ is a set of isolated points. Besides, thanks to Proposition~\ref{glimite}, for all $n\in\mathbb N$ and $x\in\mathbb{R}$,
	\[  g^m_n(x)\to g_n(x)\]  
	when $m$ tends to infinity. Fix $n\in\mathbb N$,
	\[  g_n^m(x)=\sum_{k=0}^n {n\choose k} g_{k}^C(x) g_{n-k}^{B,m}(x).\]  
	Note that $(g^B)^m=g^{B,m}\to g^B$, we can apply Proposition~\ref{glimite} so,
	\[  g_{k}^{B,m}(x)\to g_k^B(x)\]  
	for all $k\in\mathbb N$ and $x\in\mathbb{R}$. Then, when $m$ tends to infinity,
	\[  g_n^m(x)=\sum_{k=0}^n {n\choose k} g_{k}^C(x) g_{n-k}^{B,m}(x)\to\sum_{k=0}^n {n\choose k} g_{k}^C(x) g_{n-k}^{B}(x)= g_n(x). \qedhere\]  
\end{proof}

\subsection{More calculus on $g$-monomials}
 We will calculate now a new expression for the $g$-monomials using the integration by parts formula (Lemma~\ref{formulapartes}) and iterative integrals. The purpose of this is to see how the $g$-monomials differ from being a power of $g_1$. Define
\begin{equation}\label{hfunction}
		\begin{aligned}
			\frac{h_{1,k}(x)}{k+1}=&\int_{x_0}^x g_k\Delta g \operatorname{d}\mu_g, \; k\geq 0, \\
			\frac{h_{j+1,k}(x)}{k+j+1}=&\int_{x_0}^x h_{j,k} \operatorname{d}\mu_g,\; k\geq 0,\; j\geq 1.
		\end{aligned}
\end{equation}
We have the following result.

\begin{pro}\label{gexpr12222} Let $g:\mathbb R\to\mathbb R$ be a derivator and $n\in\mathbb N$. For all $x\in\mathbb R$,
	\begin{equation*}
		g_{n}(x)= g_{n-1}(x)g_1(x)-\sum_{j=1}^{n-1} h_{j,n-1-j}(x).
	\end{equation*}
\end{pro}

\begin{proof} The result is trivial for $n=1$. We proceed by induction. Suppose the above is true for some $n\in\mathbb N$. For all $x\in\mathbb R$,
		\[  
			\frac{g_{n+1}(x)}{n+1}=\int_{x_0}^x g_{n}\operatorname{d}\mu_g =\int_{x_0}^x g_{n-1} g_1\operatorname{d}\mu_g -\sum_{j=1}^{n-1} \int_{x_0}^xh_{j,n-1-j} \operatorname{d}\mu_g.
\]  
	Applying Lemma~\ref{formulapartes},
		\[  \begin{aligned}
			\int_{x_0}^x g_{n-1} g_1\operatorname{d}\mu_g=&\frac{1}{n}\left(g_{n}(x)g_{1}(x)-\int_{x_0}^x g_{n}\operatorname{d}\mu_g-n\int_{x_0}^x g_{n-1}\Delta g\operatorname{d}\mu_g\right)\\
			=&\frac{1}{n}\left(g_{n}(x)g_{1}(x)-\int_{x_0}^x g_{n}\operatorname{d}\mu_g- h_{1,n-1}\right).
		\end{aligned}\]  
By expression~\eqref{hfunction},
		\[  \begin{aligned}
			\frac{g_{n+1}(x)}{n+1}=&\frac{1}{n}\left(g_{n}(x)g_{1}(x)-\int_{x_0}^x g_{n}\operatorname{d}\mu_g- h_{1,n-1}\right)-\sum_{j=1}^{n-1} \int_{x_0}^xh_{j,n-1-j}(s) \operatorname{d}\mu_g\\
							=&\frac{1}{n}\left(g_{n}(x)g_{1}(x)-\int_{x_0}^x g_{n}\operatorname{d}\mu_g- h_{1,n-1}\right)-\sum_{j=1}^{n-1} \frac {h_{j+1,n-1-j}(x)}{n}\\
							=&\frac{1}{n}\left(g_{n}(x)g_{1}(x)-\int_{x_0}^x g_{n}\operatorname{d}\mu_g- h_{1,n-1}-\sum_{j=1}^{n-1} h_{j+1,n-1-j}(x)\right)\\
							=&\frac{1}{n}\left(g_{n}(x)g_{1}(x)-\int_{x_0}^x g_{n}\operatorname{d}\mu_g-\sum_{j=1}^{n} h_{j,n-j}(x)\right).
		\end{aligned}\]  
Hence,
\[  g_{n+1}(x)= g_{n}(x)g_1(x)-\sum_{j=1}^{n} h_{j,n-j}(x).\qedhere\]  
\end{proof}

We now present a general formula obtained by applying the recursive expression that we have just computed.

\begin{pro}Let $g:\mathbb R\to\mathbb R$ be a derivator and $n\in\mathbb N$. For all $x\in\mathbb R$,
	\begin{equation*}
		g_{n}(x)=g_1(x)^{n}-\sum_{k=1}^{n-1} g_1(x)^{n-1-k}\sum_{j=1}^k h_{j,k-j}(x).
	\end{equation*}
\end{pro}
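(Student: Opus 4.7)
The plan is to proceed by induction on $n$ using the one-step recursion~\eqref{monrecu} already established in Proposition~\ref{gexpr12222}. The target formula~\eqref{monrecu2} looks exactly like what one obtains by iterating~\eqref{monrecu} until the factor $g_{n-1}$ on the right-hand side is whittled down to $g_1$, peeling off one copy of $g_1(x)$ (and one ``correction'' sum) at a time; so induction is the natural way to formalize this.

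The base case $n=1$ is immediate, since the outer sum $\sum_{k=1}^{0}$ is empty and~\eqref{monrecu2} reduces to $g_1(x)=g_1(x)$. For the inductive step, assume~\eqref{monrecu2} holds for some $n\in\mathbb N$. First I would apply~\eqref{monrecu} with $n$ replaced by $n+1$:
\[
g_{n+1}(x)=g_n(x)g_1(x)-\sum_{j=1}^{n}h_{j,n-j}(x).
\]
Then I would substitute the inductive hypothesis into $g_n(x)$ and distribute the factor $g_1(x)$, obtaining
\[
g_n(x)g_1(x)=g_1(x)^{n+1}-\sum_{k=1}^{n-1}g_1(x)^{n-k}\sum_{j=1}^{k}h_{j,k-j}(x).
\]

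The only remaining task is a purely combinatorial re-indexing: the extra term $-\sum_{j=1}^{n}h_{j,n-j}(x)$ coming from~\eqref{monrecu} is precisely the missing summand $k=n$ in the outer sum (note that $g_1(x)^{n-k}=g_1(x)^0=1$ when $k=n$). Absorbing it into the sum yields
\[
g_{n+1}(x)=g_1(x)^{n+1}-\sum_{k=1}^{n}g_1(x)^{n-k}\sum_{j=1}^{k}h_{j,k-j}(x),
\]
which is exactly~\eqref{monrecu2} at level $n+1$, completing the induction.

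There is no real obstacle here; the only place where a mistake is likely is the index bookkeeping in matching the new summand to $k=n$, so I would be careful to verify explicitly that the exponent of $g_1(x)$ and the upper limit of the inner sum agree with those prescribed by~\eqref{monrecu2}. No further tools beyond Proposition~\ref{gexpr12222} and the definition~\eqref{hfunction} of $h_{j,k}$ are needed.
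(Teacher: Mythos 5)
Your proposal is correct and matches the paper's own proof essentially line for line: both proceed by induction on $n$, substitute the inductive hypothesis into the recursion~\eqref{monrecu} of Proposition~\ref{gexpr12222}, distribute $g_1(x)$, and absorb the leftover term $-\sum_{j=1}^{n}h_{j,n-j}(x)$ as the $k=n$ summand. The index bookkeeping you flag as the only delicate point checks out exactly as you describe.
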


\begin{proof} The result is trivial for $n=1$. We proceed by induction again. Suppose the above is true for some $n\in\mathbb N$. Applying Proposition~\ref{gexpr12222}, for all $x\in\mathbb R$,
		\[  \begin{aligned}
			 g_{n+1}(x)=& g_{n}(x)g_1(x)-\sum_{j=1}^{n} h_{j,n-j}(x)\\
					=&\left(g_1(x)^{n}-\sum_{k=1}^{n-1}g_1(x)^{n-1-k}\sum_{j=1}^k h_{j,k-j}(x)\right)g_1(x)-\sum_{j=1}^{n} h_{j,n-j}(x)\\
					=&g_1(x)^{n+1}-\sum_{k=1}^{n-1}g_1(x)^{n-k}\sum_{j=1}^k h_{j,k-j}(x)-\sum_{j=1}^{n} h_{j,n-j}(x)\\
					=&g_1(x)^{n+1}-\sum_{k=1}^{n}g_1(x)^{n-k}\sum_{j=1}^k h_{j,k-j}(x).
		\end{aligned}\]  
\end{proof}

\section{Stieltjes-{analytic} functions}
\label{analitica}

In this section we introduce the Stieltjes-{analytic} functions. We study first some of the properties of series of $g$-monomials. We will try to replicate the classical analytic theory as far as possible. We will see several examples that will limit how far we can go. 

Some studies have tried to develop a analytic function theory in the frame of time scales \cite{timescales}. A comparison between this and our theory is particularly interesting since the Stieltjes derivative generalizes time scales \cite{RodrigoTheory}.
\subsection{$g$-Monomial series}

We can start talking about Stieltjes-{analytic} functions now that we have the $g$-monomials already defined. Following the classical case, we want our functions to be an infinite sum of $g$-monomials, that is,
\[  f(x)=\sum_{n=0}^\infty a_n g_n(x),\]  
for some $\{a_n\}_{n=0}^\infty\subset \mathbb{F}$ and $x\in\mathbb{R}$. As we will see later, finding the set of convergence of a series of $g$-monomials can be more challenging than it seems. It is also interesting to ask whether there is any relationship between an analytic function in the usual sense defined by some coefficients at a fixed point, and the, a priori, Stieltjes-analytic function defined by the same coefficients using the $g$-monomials. We will prove, thanks to Theorem~\ref{ggcgb}, that, under certain hypotheses, the map

\[  
\begin{tikzcd}[row sep=3ex,column sep = 6ex]
	\left\{ \begin{array}{cc}
		\text{Series of } g\text{-monomials} \\
		\text{ Stieltjes-analytic functions}
	\end{array} \right\} \arrow{r} & \left\{ \begin{array}{cc}
		\text{Power series} \\
		\text{ Analytic functions}
	\end{array} \right\} \\ 
	f(x)= \sum\limits_{n=0}^\infty a_n g_n(x) \arrow[mapsto]{r} & \widetilde f(x)=\sum\limits_{n=0}^\infty a_n x^n,		
\end{tikzcd}
\]  
satisfies that
\begin{equation}
	\label{formulaseries}
	f(x)= \sum_{k=0}^\infty\frac{ \widetilde f^{(k)}(g_1^C(x))}{k!}g_{k}^B(x).
\end{equation}
A result that we will often use is the following.

\begin{thm}[{\cite[Theorem 8.3]{Rudin}}]
	\label{sumasij}
	Given any sequence $\{a_{ij}\}_{i,j=0}^\infty\subset\mathbb{F}$. If
	\[  {\sum_{i=0}^\infty \sum_{j=0}^\infty\left\lvert a_{ij}\right\rvert<\infty},\]  
	then
	\[  \sum_{i=0}^\infty\sum_{j=0}^\infty a_{ij}=\sum_{j=0}^\infty\sum_{i=0}^\infty a_{ij}.\]  
\end{thm}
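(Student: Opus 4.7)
The plan is to reduce the statement to Fubini's theorem applied to the product of counting measures on $\mathbb N$. First I would check that every sum in sight is absolutely convergent, so that the two iterated sums are at least well-defined complex numbers. The row sums $\sum_j a_{ij}$ converge absolutely by hypothesis, and $\left\lvert \sum_j a_{ij}\right\rvert\le \sum_j \left\lvert a_{ij}\right\rvert<b_i$, so the outer sum $\sum_i\sum_j a_{ij}$ converges absolutely since $\sum_i b_i<\infty$. For the other order, I would note that $\left\lvert a_{ij}\right\rvert\le \sum_k\left\lvert a_{ik}\right\rvert<b_i$ for every $j$, hence $\sum_i \left\lvert a_{ij}\right\rvert\le\sum_i b_i<\infty$, so the column sums also converge absolutely.

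Next I would realize the double family $\{a_{ij}\}$ as a function $f:\mathbb N\times\mathbb N\to\mathbb F$, $f(i,j)=a_{ij}$, and endow $\mathbb N$ with counting measure $\nu$. Integration against $\nu$ coincides with summation, so by Tonelli's theorem (applied to $\left\lvert f\right\rvert$, which is non-negative and measurable)
\[
\int_{\mathbb N\times\mathbb N}\left\lvert f\right\rvert \operatorname{d}(\nu\otimes\nu)=\sum_{i=0}^\infty\sum_{j=0}^\infty\left\lvert a_{ij}\right\rvert\le \sum_{i=0}^\infty b_i<\infty,
\]
so $f\in \mathcal L^1(\nu\otimes\nu)$. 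Fubini's theorem then asserts that both iterated integrals exist, are finite, and equal $\int f\operatorname{d}(\nu\otimes\nu)$; translated back into sums this is precisely the desired equality.

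If one wanted to avoid invoking measure theory, I would instead argue directly. Split each $a_{ij}$ into its positive and negative real and imaginary parts (four non-negative families), each dominated by $\left\lvert a_{ij}\right\rvert$ and therefore still satisfying the hypothesis. For non-negative families one can show that both iterated sums coincide with the supremum $\sup_N \sum_{i=0}^N\sum_{j=0}^N a_{ij}$ of finite rectangular partial sums, using monotone convergence of partial sums; indeed, each iterated sum dominates every finite rectangular sum, and conversely each iterated sum is approximated from below by such finite rectangles. Linearity then reassembles the four pieces, yielding the equality in the general case.

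The main obstacle is not a calculation but a subtlety in justifying the interchange: one must be sure that the domination hypothesis $\sum_j\left\lvert a_{ij}\right\rvert<b_i$ together with $\sum_i b_i<\infty$ is strong enough to guarantee summability of the \emph{transposed} family, which is exactly the step handled by $\left\lvert a_{ij}\right\rvert\le b_i$ above. Once that bookkeeping is in place, either Fubini or the $\sup$-of-rectangles argument closes the proof.
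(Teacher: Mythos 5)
Your proof is correct. Note, however, that the paper does not prove this statement at all: it is quoted verbatim as \cite[Theorem 8.3]{Rudin} and used as a black box, so there is no internal proof to compare against. Measured against Rudin's own argument, your route is genuinely different: Rudin avoids measure theory by embedding the problem in a convergence statement about functions on a countable metric space $E=\{x_0,x_1,x_2,\dots\}$ with $x_n\to x_0$, setting $f_i(x_n)=\sum_{j=0}^n a_{ij}$ and using the Weierstrass $M$-test (with the $b_i$ as the dominating constants) to get uniform convergence of $\sum_i f_i$, whence continuity at $x_0$ yields the interchange. Your first argument instead identifies the double sum with an integral against the product of counting measures and invokes Tonelli--Fubini; your fallback argument (positive/negative real and imaginary parts, then sup over finite rectangles for non-negative families) is the standard elementary proof of unconditional summability. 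All three are valid; yours has the advantage of making the role of absolute summability of the whole double family explicit, and your observation that $\lvert a_{ij}\rvert\le b_i$ guarantees summability of the transposed family is exactly the bookkeeping needed to make either iterated sum well-defined. One could remark that the conclusion one actually needs in the paper (e.g.\ in Proposition~\ref{convergenciags}) is precisely this unconditional-summability form, so your phrasing is well adapted to the application.
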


For continuous derivators, since $g_n=g_1^n$, we can do the same analytic function theory as in the classical case. In this case, the convergence of a power series centered at $x_0$ either occurs only on $x_0$ or on a ball of positive radius. We can replicate the same result for continuous derivators, which are going to behave much better than those that have discontinuities. Recall that we continue with the notation $g_{x_0,n}\equiv g_n$.
\subsubsection{Convergence}

In the general case, the convergence on the right hand side or the left hand side of $x_0$ does not imply convergence on the other. In fact, it will be a lot easier for series of $g$-monomials to converge at points at the right side than at the left side, see Proposition~\ref{sucesionizq}. This will force us to work with both sides separately. The next Proposition is similar to \cite[Proposition 1.1.1]{Krantz1992}.
\begin{pro}
	\label{gcontinuoconv}
	Let $g:\mathbb{R}\to\mathbb{R}$ be a continuous derivator and fix some $x_0\in\mathbb{R}$. If
	\[  \sum_{n=0}^\infty a_n g_1(x)^n\]  
	converges for some $x=c\in\mathbb{R}$, then the series converges absolutely on $B_g(x_0,\left\lvert g_1(c)\right\rvert)$. 
\end{pro}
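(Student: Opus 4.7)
The plan is to mimic the classical Abel-type argument for the radius of convergence of a power series, exploiting the fact that for a continuous derivator Proposition~\ref{contcasegmon} gives us $g_n(x)=g_1(x)^n$, so that $\sum a_n g_n(x)=\sum a_n g_1(x)^n$ is literally a power series in the variable $u=g_1(x)$.

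First, I would unpack the meaning of the ball $B_g(x_0,|g_1(c)|)$. Since $g_1(x)=g(x)-g(x_0)$, one has $d_g(x_0,x)=|g_1(x)|$, so $x\in B_g(x_0,|g_1(c)|)$ is equivalent to the scalar inequality $|g_1(x)|<|g_1(c)|$. In particular, if $g_1(c)=0$ the statement is vacuous (the ball is empty), so we may assume $g_1(c)\neq 0$.

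Next, from the convergence of $\sum_{n=0}^\infty a_n g_1(c)^n$ at $x=c$, the general term tends to zero, hence the sequence $\{a_n g_1(c)^n\}_{n\in\mathbb N}$ is bounded: there exists $M>0$ with $|a_n g_1(c)^n|\le M$ for all $n$. Then for any $x\in B_g(x_0,|g_1(c)|)$, set $r=|g_1(x)|/|g_1(c)|<1$ and estimate
\[
|a_n g_n(x)|=|a_n g_1(x)^n|=|a_n g_1(c)^n|\left|\frac{g_1(x)}{g_1(c)}\right|^n\le M r^n.
\]
Since $\sum_{n=0}^\infty M r^n$ is a convergent geometric series, the comparison test yields absolute convergence of $\sum a_n g_n(x)$ at every $x\in B_g(x_0,|g_1(c)|)$.

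There is essentially no obstacle here: the continuity of $g$ collapses the $g$-monomial theory to the classical situation via Proposition~\ref{contcasegmon}, and the only subtlety is the translation between $d_g$-balls and ordinary inequalities on $g_1$. The proof is a one-paragraph routine application of the classical argument that any convergent power series converges absolutely inside its radius of convergence.
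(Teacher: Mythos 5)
Your proof is correct and is essentially the same as the paper's: both handle the degenerate case $g_1(c)=0$, use boundedness of the terms $\left\lvert a_n g_1(c)^n\right\rvert$ coming from convergence at $c$, and compare with a geometric series of ratio $\left\lvert g_1(x)/g_1(c)\right\rvert<1$ on the ball. The only cosmetic difference is that you route through Proposition~\ref{contcasegmon} to identify $g_n=g_1^n$, which is not strictly needed since the statement is already phrased in terms of $g_1(x)^n$.
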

\begin{proof}
	If $g_1(c)=0$, the ball is empty and we can only guarantee convergence on the set $g_1^{-1}(\{0\})$. Suppose $\left\lvert g_1(c)\right\rvert>0$. For $y\in B_g(x_0,\left\lvert g_1(c)\right\rvert)$, 
	\[  \sum_{n=0}^\infty \left\lvert a_n\right\rvert\left\lvert g_1(y)^n\right\rvert=\sum_{n=0}^\infty \left\lvert a_n g_1(c)^n\right\rvert\left\lvert \frac{g_1(y)}{g_1(c)}\right\rvert^n.\]  
	Since the series converges at $c$, the term $\left\lvert a_n g_1(c)^n\right\rvert$ is bounded by some constant $C>0$. {Since $y\in B_g(x_0,\left\lvert g_1(c)\right\rvert)$ we have that $\left\lvert g_1(y)\right\rvert < \left\lvert g_1(c)\right\rvert$, so}
	\[  \sum_{n=0}^\infty \left\lvert a_n\right\rvert\left\lvert g_1(y)^n\right\rvert\leq\sum_{n=0}^\infty C\left\lvert \frac{g_1(y)}{g_1(c)}\right\rvert^n<\infty.\qedhere\]  
\end{proof}
\begin{pro}
	\label{convergenciags}
	Let $g:\mathbb{R}\to\mathbb{R}$ be a derivator and fix some $x_0\in\mathbb{R}$. Take some $c_1,c_2\in\mathbb{R}$ such that $x_0\in(c_1,c_2)$. If
	\[  \sum_{n=0}^\infty a_n g_n(x)\]  
	converges absolutely for $x=c_i$, with $i=1,2$. Then the series converges absolutely and uniformly on $[c_1,c_2]$. Besides, if $M={ \max\limits_{i =1,2}\left\lvert g_1^C(c_i)\right\rvert>0}$, then formula~\eqref{formulaseries} holds for all $x\in[c_1,c_2]\cap B_{g^C}(x_0,M)$.
\end{pro}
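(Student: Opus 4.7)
First, to establish absolute and uniform convergence on $[c_1,c_2]$, I would prove by induction on $n$ two monotonicity bounds on the monomials. On $[x_0,c_2]$ one has $0\le g_n(x)\le g_n(c_2)$, because $g_n$ is defined as the integral $n\int_{[x_0,x)}g_{n-1}\,\mathrm{d}\mu_g$ of a non-negative function, monotone in the upper limit. Symmetrically, $|g_n(x)|\le|g_n(c_1)|$ on $[c_1,x_0]$. Combined, this gives $|g_n(x)|\le|g_n(c_1)|+|g_n(c_2)|$ on $[c_1,c_2]$, and the Weierstrass $M$-test applied with the hypothesis yields absolute uniform convergence.

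For the second statement, I would expand each $g_n(x)$ using Theorem~\ref{ggcgb} and simplify $g_k^C(x)=g_1^C(x)^k$ via Proposition~\ref{gmoncontinua} to rewrite
\[
f(x)=\sum_{n=0}^\infty a_n\sum_{k=0}^n\binom{n}{k}g_1^C(x)^k\,g_{n-k}^B(x).
\]
The plan is then to invoke Theorem~\ref{sumasij} to swap the order of summation and reindex via $j=n-k$, producing
\[
f(x)=\sum_{j=0}^\infty g_j^B(x)\sum_{k=0}^\infty a_{j+k}\binom{j+k}{j}g_1^C(x)^k,
\]
and to recognize the inner sum as $\til f^{(j)}(g_1^C(x))/j!$.

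The main obstacle is verifying absolute convergence of the double series, particularly to the left of $x_0$ where signs alternate. The key observation is a sign coherence: for $x\le x_0$ one has $g_1^C(x)\le 0$, and by the first lemma of Section~\ref{cotassecc} applied to the derivator $g^B$ one has that $g_{n-k}^B(x)$ has sign $(-1)^{n-k}$, so every term in the inner sum shares the common sign $(-1)^n$. Consequently
\[
\sum_{k=0}^n\binom{n}{k}|g_1^C(x)|^k\,|g_{n-k}^B(x)|=|g_n(x)|,
\]
and the full double sum of absolute values equals $\sum_n|a_n|\,|g_n(x)|$, which is finite by the first paragraph. Theorem~\ref{sumasij} then justifies the rearrangement. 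The same sign coherence at $c_i$ gives $|g_n(c_i)|\ge|g_1^C(c_i)|^n$, hence $\sum_n|a_n|\,|g_1^C(c_i)|^n<\infty$ for $i=1,2$, so the radius of convergence of $\til f$ is at least $M$, making $\til f^{(j)}$ well-defined at $g_1^C(x)$ exactly when $x\in B_{g^C}(x_0,M)$, which closes the argument.
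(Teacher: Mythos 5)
Your proof is correct and follows essentially the same route as the paper: the Weierstrass $M$-test via the monotonicity of $\left\lvert g_n\right\rvert$ away from $x_0$, then Theorem~\ref{ggcgb} together with Theorem~\ref{sumasij} to rearrange the double sum and identify the inner series with $\til f^{(k)}(g_1^C(x))/k!$. Your explicit sign-coherence argument (sign $(-1)^{n-k}$ for $g^B_{n-k}$ to the left of $x_0$) is exactly the justification the paper compresses into the phrase ``since every addend has the same sign,'' and your lower bound $\left\lvert g_n(c_i)\right\rvert\ge\left\lvert g_1^C(c_i)\right\rvert^n$ plays the role of the paper's appeal to Proposition~\ref{gcontinuoconv} applied to the identity.
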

\begin{proof}
	Note that for all $n\in\mathbb N$, $\left\lvert g_n\right\rvert$ increases as we move away from $x_0$. Then,
	\[  \sum_{n=0}^\infty \left\lvert a_n\right\rvert \left\lvert g_n(x)\right\rvert\leq \max_{i=1,2}\sum_{n=0}^\infty \left\lvert a_n\right\rvert \left\lvert g_n(c_i)\right\rvert,\]  
	for all $x\in[c_1,c_2]$. Thanks to Weierstrass M-test, the series converges uniformly on $[c_1,c_2]$. Note that, for all $x\in\mathbb{R}$, thanks to Theorem~\ref{ggcgb},
	\[  \left\lvert g_n(x)\right\rvert=\sum_{k=0}^n {n\choose k} \left\lvert g_{k}^B(x)\right\rvert\left\lvert g_{n-k}^C(x)\right\rvert,\]  
	since every addend has the same sign. In particular, we have that
	\[  \left\lvert g^\star_n(x)\right\rvert\leq \left\lvert g_n(x)\right\rvert,\]  
	with $\star\in\{B,C\}$, for all $n\in\mathbb N$ and $x\in\mathbb{R}$. Suppose now that $\left\lvert g_1^C(c_i)\right\rvert>0$ for some $i=1,2$ and denote $M={ \max\limits_{i =1,2}\left\lvert g_1^C(c_i)\right\rvert}$. Applying Proposition~\ref{gcontinuoconv} to the identity { function}, $\{a_n\}_{n\in\mathbb N}$ defines a power series that converges for $\left\lvert x\right\rvert<M$. Define then
	\[  \widetilde f(x)=\sum_{n=0}^\infty a_n x^n.\]  
	In particular, $f$ is a usual analytic function and we can express its derivatives as a power series at $\left\lvert x\right\rvert<M$. For any $x\in[c_1,c_2]$, we have
	\[  \sum_{n=0}^\infty \left\lvert a_n\right\rvert\left\lvert g_n(x)\right\rvert=\sum_{n=0}^\infty \left\lvert a_n\right\rvert\sum_{k=0}^n {n\choose k} \left\lvert g_{k}^B(x)\right\rvert\left\lvert g_{n-k}^C(x)\right\rvert<\infty.\]  
	Thanks to Theorem~\ref{sumasij},
	\begin{equation}
		\label{formulaseries1}
		\begin{aligned}
			\sum_{n=0}^\infty a_n g_n(x)&=\sum_{n=0}^\infty a_n\sum_{k=0}^n g_{k}^B(x) g_{n-k}^C(x)=\sum_{k=0}^\infty\sum_{n=k}^\infty a_n {n\choose k} g_{n-k}^C(x)g_{k}^B(x) \\
			&=\sum_{k=0}^\infty \frac{g_{k}^B(x)}{k!}\sum_{n=k}^\infty a_n \frac{n!}{(n-k)!}g_1^C(x)^{n-k}.
		\end{aligned}
	\end{equation}
	Recall that
	\[  \widetilde f^{(k)}(x)=\sum_{n=k}^\infty a_n \frac{n!}{(n-k)!}x^{n-k},\]  
	for all $\left\lvert x\right\rvert<M$. We obtain, for all $x\in[c_1,c_2]$ such that $\left\lvert g_1^C(x)\right\rvert<M$,
	\[  \sum_{n=0}^\infty a_n g_n(x)= \sum_{k=0}^\infty\frac{\widetilde f^{(k)}(g_1^C(x))}{k!}g_{k}^B(x).\qedhere\]  
\end{proof}


Note that we only needed the absolute convergence of the series of $g$-monomials for us to get to~\eqref{formulaseries1}. Applying both Proposition~\ref{gcontinuoconv} and Proposition~\ref{convergenciags} we conclude the following. 
\begin{cor}
If $g$ is a continuous derivator and
\[  f(x)=\sum_{n=0}^\infty a_n g_1(x)^n\]  
converges at some $c\in\mathbb{R}$ such that $g_1(c)\neq0$, then formula~\eqref{formulaseries} satisfies and
\[  f(x)=\sum_{n=0}^\infty a_ng_1(x)^n=\widetilde f(g_1(x)),\]  
for all $x\in B_g(x_0,\left\lvert g_1(c)\right\rvert)$.
\end{cor}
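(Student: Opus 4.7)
The plan is to combine Propositions~\ref{gcontinuoconv} and~\ref{convergenciags} and then specialize to the continuous setting, where the jump part $g^B$ vanishes and formula~\eqref{formulaseries} collapses.

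First, since $g$ is continuous, Proposition~\ref{contcasegmon} gives $g_n=g_1^n$ for every $n\in\mathbb N$, so the series $\sum a_n g_1(x)^n$ is literally the series of $g$-monomials $\sum a_n g_n(x)$. Applying Proposition~\ref{gcontinuoconv} at the given convergent point $c$, I obtain absolute convergence of this series on the entire ball $B_g(x_0,|g_1(c)|)$.

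Next, I would fix an arbitrary $x\in B_g(x_0,|g_1(c)|)$ and exhibit endpoints $c_1<x_0<c_2$ with $x\in[c_1,c_2]$, with $|g_1(c_i)|<|g_1(c)|$ for $i=1,2$, and with $M:=\max\{|g_1^{C}(c_1)|,|g_1^{C}(c_2)|\}>|g_1(x)|$. Such endpoints exist because $g$ is continuous: on the side of $x_0$ where $c$ lies, the intermediate value theorem produces a point whose $g$-value lies strictly between $g(x)$ and $g(c)$, and on the opposite side ordinary continuity of $g$ at $x_0$ provides a point with arbitrarily small $|g_1|$. With $c_1,c_2$ chosen this way, Proposition~\ref{gcontinuoconv} ensures absolute convergence of the series at both endpoints, so Proposition~\ref{convergenciags} applies and formula~\eqref{formulaseries} is valid on $[c_1,c_2]\cap B_{g^{C}}(x_0,M)$. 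Since $g^{C}=g$ here and $|g_1(x)|<M$ by construction, the point $x$ sits in this intersection, so~\eqref{formulaseries} holds at $x$.

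Finally, I would read off the conclusion. As $g$ is continuous, Definition~\ref{gbgc} forces $g^{B}\equiv 0$, hence $g_{k}^{B}(x)=0$ for every $k\ge 1$, while $g_{0}^{B}(x)=1$. All terms with $k\ge 1$ in~\eqref{formulaseries} vanish, leaving
\[
f(x)=\frac{\til f^{(0)}(g_1^{C}(x))}{0!}\,g_{0}^{B}(x)=\til f(g_1^{C}(x))=\til f(g_1(x)),
\]
which is exactly the claimed identity; the convergence of $\til f$ at $g_1(x)$ is automatic because the classical radius of convergence of $\til f$ is at least $|g_1(c)|>|g_1(x)|$. The only mildly delicate step is the explicit construction of $c_1,c_2$ with $M>|g_1(x)|$, and even that reduces to an elementary intermediate-value argument once continuity of $g$ is used.
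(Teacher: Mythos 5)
Your proof is correct and follows exactly the route the paper intends: the corollary is stated with no written proof beyond the preceding sentence ``Applying both Proposition~\ref{gcontinuoconv} and Proposition~\ref{convergenciags} we conclude the following,'' and your argument is a faithful fleshing-out of that plan, including the intermediate-value construction of suitable endpoints $c_1,c_2$ and the observation that $g^B\equiv 0$ collapses formula~\eqref{formulaseries} to $\til f(g_1(x))$. The only cosmetic point is that your phrase ``$g$-value strictly between $g(x)$ and $g(c)$'' should be read as ``$\left\lvert g_1\right\rvert$-value strictly between $\left\lvert g_1(x)\right\rvert$ and $\left\lvert g_1(c)\right\rvert$,'' which is clearly what you use.
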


\begin{rem}
	\label{contydescguay}
	In general, for continuous derivators, the theory behaves like in the classical case. In a certain way, the convergence will also behave well for derivators of which the discontinuity points are isolated points. Suppose we have such a derivator $g$. Fix some $x_0\in\mathbb{R}$. There exists $\delta>0$ such that $g$ contains {at most one} discontinuity in the interval $(x_0-\delta,x_0+\delta)${, namely $x_0$ itself}. In particular, we have
%
%
	\[  g_1^B(x)=\begin{dcases}
		\Delta g(x_0), & x\in(x_0,x_0+\delta),\\
		0, & x\in(x_0-\delta,x_0].
	\end{dcases}
	\]  
	Then, $g^{B}_n{(x)}=0$ for all $n\geq2$ and $x\in(x_0-\delta,x_0+\delta)$, {recall Proposition~\ref{gmondiscontinua}}. Thanks to Theorem~\ref{ggcgb}:
	\[  g_n(x)=\begin{dcases}
		g_1^C(x)^n+n\Delta g(x_0)g_1^C(x)^{n-1}, & x\in(x_0,x_0+\delta),\\
		g_1^C(x)^n, & x\in(x_0-\delta,x_0].
	\end{dcases}
	\]  
	Any $g$-monomial series will converge in a usual neighborhood of $x_0$ if and only if the series of $g^C$-monomials converges. For $x\in(x_0-\delta,x_0+\delta)$, the formula~\eqref{formulaseries} tells us that
	\[  \sum_{n=0}^\infty a_n g_n(x)=f(g_1^C(x))+g_1^B(x)f'(g_1^C(x)).\]  
\end{rem}

\subsubsection{Change of center}

Let us see how the change of center behaves with series of $g$-monomials. Notice that this is necessary information in the classical case too. In fact, this relation is what allows a power series to be analytic on a certain interval.
\begin{pro}
	\label{cambiodepuntosuma}
	Let $g:\mathbb{R}\to\mathbb{R}$ be a derivator and fix some $x_0\in\mathbb{R}$. If
	\[  f(x)=\sum_{n=0}^\infty a_n g_n(x)\]  
	converges absolutely on $[c_1,c_2]$, for some $c_1,c_2\in\mathbb R$ such that $x_0\in(c_1,c_2)$. Then, if $s\in(x_0,c_2 )$,
	\begin{equation}
		\label{1111}
	 f(x)=	\sum_{k=0}^\infty \left(\frac{1}{k!}\sum_{n=k}^\infty a_n\frac{n!}{(n-k)!}g_{n-k}(s)\right) g_{s,k}(x)
	\end{equation}
	on $[s,c_2]$, where the series converges absolutely. If we choose $s\in(c_1,x_0)$, then ~\eqref{1111} holds on $[c_1,s]$, where the series converges absolutely. 
\end{pro}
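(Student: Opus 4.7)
The plan is to substitute the change-of-center identity from Proposition~\ref{relpolprop} into the series defining $f(x)$ and then interchange the two summations via Theorem~\ref{sumasij}. Concretely, writing $g_n(x)=g_{x_0,n}(x)$ and applying Proposition~\ref{relpolprop} (with the roles of the binomial indices swapped) yields
\[
g_n(x)=\sum_{k=0}^n\binom{n}{k}g_{n-k}(s)\,g_{s,k}(x),
\]
so
\[
f(x)=\sum_{n=0}^\infty a_n\sum_{k=0}^n\binom{n}{k}g_{n-k}(s)\,g_{s,k}(x).
\]
Setting $a_{n,k}=a_n\binom{n}{k}g_{n-k}(s)\,g_{s,k}(x)$ for $0\le k\le n$ (and $0$ otherwise), the formula we wish to prove is exactly the equality $\sum_n\sum_k a_{n,k}=\sum_k\sum_n a_{n,k}$ together with the observation that the $k=n$ sum equals $\frac{1}{k!}\sum_{n\ge k}a_n\frac{n!}{(n-k)!}g_{n-k}(s)$.

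To apply Theorem~\ref{sumasij}, the key point is a sign uniformity which makes the row sum of $|a_{n,k}|$ collapse to $|a_n|\,|g_n(x)|$. For $s\in(x_0,c_2)$ and $x\in[s,c_2]$ all three of $g_{n-k}(s)$, $g_{s,k}(x)$, $g_n(x)$ are non-negative by the sign lemma of Section~\ref{cotassecc}, so every term in the binomial expansion for $g_n(x)$ is non-negative and
\[
\sum_{k=0}^n|a_{n,k}|=|a_n|\sum_{k=0}^n\binom{n}{k}g_{n-k}(s)\,g_{s,k}(x)=|a_n|\,g_n(x).
\]
For $s\in(c_1,x_0)$ and $x\in[c_1,s]$ the same lemma gives $\operatorname{sgn}g_{n-k}(s)=(-1)^{n-k}$ and $\operatorname{sgn}g_{s,k}(x)=(-1)^k$, so the product has sign $(-1)^n$ independent of $k$; once again every term has a common sign and $\sum_k|a_{n,k}|=|a_n|\,|g_n(x)|$. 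In either case the hypothesis of absolute convergence on $[c_1,c_2]$ supplies the bound $b_n:=|a_n|\,|g_n(x)|$ with $\sum_n b_n<\infty$, so Theorem~\ref{sumasij} legitimates the swap.

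After the interchange,
\[
f(x)=\sum_{k=0}^\infty g_{s,k}(x)\sum_{n=k}^\infty a_n\binom{n}{k}g_{n-k}(s)=\sum_{k=0}^\infty\!\left(\frac{1}{k!}\sum_{n=k}^\infty a_n\frac{n!}{(n-k)!}g_{n-k}(s)\right)\!g_{s,k}(x),
\]
and the same computation applied to $|a_n|$ in place of $a_n$ (valid because the double sum of absolute values is finite) shows that the new series converges absolutely on $[s,c_2]$ (respectively $[c_1,s]$). The main subtlety is the sign analysis that justifies the collapse $\sum_k|a_{n,k}|=|a_n|\,|g_n(x)|$; once this is in hand, Fubini for series (Theorem~\ref{sumasij}) does the rest.
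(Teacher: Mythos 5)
Your proposal is correct and follows essentially the same route as the paper: apply the change-of-center identity of Proposition~\ref{relpolprop}, observe that for the stated ranges of $s$ and $x$ all terms in the binomial expansion share a common sign so that the row sums of absolute values collapse to $\left\lvert a_n\right\rvert\left\lvert g_n(x)\right\rvert$, and then invoke Theorem~\ref{sumasij} to interchange the summations. Your sign analysis for the left-hand case ($\operatorname{sgn}g_{n-k}(s)=(-1)^{n-k}$, $\operatorname{sgn}g_{s,k}(x)=(-1)^k$) is exactly the content of the paper's closing remark that $\left\lvert g_n(x)\right\rvert=\sum_{k=0}^n\binom{n}{k}\left\lvert g_{s,k}(x)\right\rvert\left\lvert g_{n-k}(s)\right\rvert$ on $[c_1,s]$, just spelled out in more detail.
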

\begin{proof}
	Take $s\in(x_0,c_2 )$. Note that for all $x\in[s,c_2]$, thanks to Proposition~\ref{relpolprop},
	\[  g_n(x)=\sum_{k=0}^n {n\choose k}g_{s,k}(x)g_{n-k}(s). \]  
	Every addend is positive, hence the sum
	\[  \sum_{n=0}^\infty a_n \sum_{k=0}^n {n\choose k}g_{s,k}(x)g_{n-k}(s) \]  
	is absolutely convergent. Applying Theorem~\ref{sumasij},
	\begin{align*}
		\sum_{n=0}^\infty a_n g_n(x)&=\sum_{n=0}^\infty a_n \sum_{k=0}^n {n\choose k}g_{s,k}(x)g_{n-k}(s)=\sum_{k=0}^\infty g_{s,k}(x)\sum_{n=k}^\infty a_n {n\choose k} g_{n-k}(s)\\
		&=\sum_{k=0}^\infty \left(\frac{1}{k!}\sum_{n=k}^\infty a_n\frac{n!}{(n-k)!}g_{n-k}(s)\right) g_{s,k}(x).
	\end{align*}
	If $s\in(c_1,x_0)$, the proof is identical {having in mind} that, for $x\in[c_1,s]$,
	\[  \left\lvert g_n(x)\right\rvert=\sum_{k=0}^n {n\choose k}\left\lvert g_{s,k}(x)\right\rvert\left\lvert g_{n-k}(s)\right\rvert. \qedhere\]  
\end{proof}

A priori, if we have a series of $g$-monomials that converges on $[x_0,c_2]$, by changing the center point to $s\in(x_0,c_2)$ we can only guarantee convergence on the right side of $s$. But for continuous derivators we can assure convergence on both sides.
\begin{cor}
If $g$ is a continuous derivator and 
\[  f(x)=\sum_{n=0}^\infty a_n g_1(x)^n\]  
converges at some $c\in\mathbb{R}$ such that $g_1(c)\neq0$, then, for all $s\in\mathbb R$, the series~\eqref{1111} converges absolutely on the ball $B_g(s,\left\lvert g_1(c)\right\rvert-\left\lvert g_1(s)\right\rvert)$.
\end{cor}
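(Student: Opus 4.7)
The plan is to reduce the entire statement to the classical theory of power series by exploiting the fact that, for a continuous derivator, the $g$-monomials are honest powers of $g_1$. First, by Proposition~\ref{gmoncontinua}, for every center $r\in\mathbb R$ and every $n\in\mathbb N$ we have $g_{r,n}(x)=(g(x)-g(r))^n$. Defining the classical power series
\[
\til f(y):=\sum_{n=0}^\infty a_n y^n,
\]
the hypothesis $\sum a_n g_1(c)^n<\infty$ means $\til f$ converges at $y=g_1(c)$, so its radius of convergence satisfies $R\ge\left\lvert g_1(c)\right\rvert$.

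Next, I would rewrite formula~\eqref{1111} under this identification. Using $g_{n-k}(s)=g_1(s)^{n-k}$ and $g_{s,k}(x)=(g(x)-g(s))^k$, the inner coefficient becomes, for $\left\lvert g_1(s)\right\rvert<R$,
\[
\frac{1}{k!}\sum_{n=k}^\infty a_n\frac{n!}{(n-k)!}g_1(s)^{n-k}=\frac{\til f^{(k)}(g_1(s))}{k!},
\]
so~\eqref{1111} is exactly the classical Taylor series of $\til f$ centered at $y_0=g_1(s)$, evaluated at the point $y_0+(g(x)-g(s))=g_1(x)$.

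The classical theorem on re-expansion of a power series then tells us this Taylor series converges absolutely whenever $\left\lvert g(x)-g(s)\right\rvert<R-\left\lvert g_1(s)\right\rvert$, which, since $R\ge\left\lvert g_1(c)\right\rvert$, is certainly true on
\[
B_g\bigl(s,\left\lvert g_1(c)\right\rvert-\left\lvert g_1(s)\right\rvert\bigr).
\]
If $\left\lvert g_1(s)\right\rvert\ge\left\lvert g_1(c)\right\rvert$, this ball has non-positive radius and is empty, so the conclusion is vacuously true; otherwise the inner coefficients are well defined and the above argument applies.

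I do not expect a genuine obstacle here: the only point needing a little care is verifying that the recentered coefficients of~\eqref{1111} really coincide with the Taylor coefficients of $\til f$ at $g_1(s)$, which is a direct computation using $g_n=g_1^n$ and the fact that $\til f$ is analytic at $g_1(s)$ whenever $\left\lvert g_1(s)\right\rvert<R$. Everything else is a transcription of the classical radius-of-convergence comparison for recentered analytic functions.
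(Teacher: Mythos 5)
Your proof is correct and is in substance the same as the paper's: both rest on Proposition~\ref{gmoncontinua} to write every $g$-monomial as a power of $g_1$ and then on the binomial estimate $\sum_{k=0}^{n}\binom{n}{k}\left\lvert g_{s,1}(x)\right\rvert^{k}\left\lvert g_1(s)\right\rvert^{n-k}=\left(\left\lvert g_{s,1}(x)\right\rvert+\left\lvert g_1(s)\right\rvert\right)^{n}<\left\lvert g_1(c)\right\rvert^{n}$ on the ball in question, which is exactly what the classical re-expansion theorem you invoke encapsulates. The paper simply carries out this computation directly through the proof of Proposition~\ref{cambiodepuntosuma} rather than citing the classical recentering statement, so the two arguments differ only in packaging.
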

\begin{proof}
This follows from the proof of Proposition~\ref{cambiodepuntosuma} and that, for all $x\in\mathbb R$,
\[  \sum_{k=0}^n {n\choose k}\left\lvert g_{s,1}(x)\right\rvert^k\left\lvert g_1(s)\right\rvert^{n-k}=(\left\lvert g_{s,1}(x)\right\rvert+\left\lvert g_1(s)\right\rvert)^n.\qedhere\]  
\end{proof}

\subsubsection{Some illustrative examples}

If the derivator is continuous, convergence is not a problem. However, it takes a single discontinuity point to break the convergence. If we have a infinite amount of discontinuities, convergence on the left side gets even more complicated. To illustrate this point, we will present some examples.
%
\begin{exa}
	\label{ejemploII}
	Take $g:\mathbb{R}\to\mathbb{R}$ given by
	\[  
	g(x)= \begin{dcases} 
		x+1, & x>0,\\
		x, & x\leq0.
	\end{dcases}
	\]  
	Let us compute the $g$-monomials centered at $x_0=0$. Following the calculations made in Remark~\ref{contydescguay},
	\[  g^B(x)=\begin{dcases}
		1, & x\in(0,\infty),\\
		0, & x\in(-\infty,0].
	\end{dcases}\]  
	Hence, for $n\geq1$,
	\[  g_n(x)=\begin{dcases}
		x^n+nx^{n-1}, & x\in(0,\infty),\\
		x^n, & x\in(-\infty,0].
	\end{dcases}
	\]  
	If we take
	\[  f(x)=\sum_{n=0}^\infty g_n(x),\]  
	we have that the previous sum converges for $\left\lvert x\right\rvert<1$. Besides,
	\[  f(x)=\begin{dcases}
		\frac{1}{1-x}+\frac{1}{(1-x)^2}, & x\in(0,1),\\
		\frac{1}{1-x}, & x\in(-1,0].
	\end{dcases}\]  
	Fix $s\in(0,1)$, we will calculate the series of $g$-monomials of $f$ centered at $s$. {Having in mind} that
	\[  \frac{k!}{(1-x)^{k+1}}=\sum_{n=k}^\infty \frac{n!}{(n-k)!} x^{n-k},
	\]  
for $x\in(-1,1)$,	let us compute the sequence of coefficients that appears in the formula~\eqref{1111}. We have that
	\begin{align*}
		\frac{1}{k!}\sum_{n=k}^\infty \frac{n!}{(n-k)!}g_{n-k}(s)&=\frac{1}{k!}\left(\sum_{n=k}^\infty \frac{n!}{(n-k)!}s^{n-k}+\sum_{n=k+1}^\infty\frac{n!}{(n-k)!}(n-k)s^{n-k-1}\right)\\
		&=\frac{1}{k!}\left(\sum_{n=k}^\infty \frac{n!}{(n-k)!}s^{n-k}+\sum_{n=k+1}^\infty\frac{n!}{(n-k-1)!}s^{n-k-1}\right)\\
		&=\frac{1}{(1-s)^{k+1}}+\frac{k+1}{(1-s)^{k+2}}.
	\end{align*}
	Now, since $g_{s,1}^B(x)=g^B(x)-1$ and $g_{s,1}^C(x)=x-s$, for $n\geq1$,
	\[  
	g_{s,n}(x)=(x-s)^n, \quad x\in(0,\infty).
	\]  
	Since $g$ is a derivator whose points of discontinuity are isolated, we can guarantee convergence on the left side of $s$. For $x\in(0,1)$ such that $\left\lvert x-s\right\rvert<\left\lvert 1-s\right\rvert$,
	\begin{align*}
		f(x)&=\sum_{k=0}^\infty \left(\frac{1}{k!}\sum_{n=k}^\infty \frac{n!}{(n-k)!}g_{n-k}(s)\right) g_{s,k}(x)\\&=\sum_{k=0}^\infty\left( \frac{1}{(1-s)^{k+1}}+\frac{k+1}{(1-s)^{k+2}}\right)(x-s)^k. 
	\end{align*}
	However, for $x=0$, we lose the convergence of the series. Note that $g_{s,1}^B(x)$ is like in Example~\ref{ejemploeee} for $h=1$ and, therefore, $g_{s,k}^B(x)=(-1)^kk!$ for all $x\in(-\infty,0]$. We know that $\left\lvert g_{s,k}^B(x)\right\rvert\leq\left\lvert g_{s,k}(x)\right\rvert$, for all $x\in\mathbb{R}$. Then,
	\begin{equation}\label{01103201662163032000143000} \left\lvert \frac{1}{(1-s)^{k+1}}+\frac{k+1}{(1-s)^{k+2}}\right\rvert k!\leq\left\lvert \frac{1}{(1-s)^{k+1}}+\frac{k+1}{(1-s)^{k+2}}\right\rvert\left\lvert g_{s,k}(x)\right\rvert, \end{equation}
	for all $x\leq0$. Since the left side of~\eqref{01103201662163032000143000} does not tend to $0$ when $k$ tends to infinity, the sum
	\[  \sum_{k=0}^\infty\left( \frac{1}{(1-s)^{k+1}}+\frac{k+1}{(1-s)^{k+2}}\right) g_{s,k}(x)\]  
	does not converge for $x\leq0$. No matter how close we get to $0$, for all $s\in(0,1)$, we only have convergence on a neighborhood of $s$ contained on $(0,1)$. Note that the size of the jump at $0$ does not matter either.
\end{exa}
\begin{exa}
	\label{ejemploIII}
	Let $\{x_k\}_{k\in\mathbb N}$ be a sequence that converges to $0$ such that $x_k<0$ for all $k\in\mathbb N$. Take a function $\Delta g:\mathbb{R}\to [0,\infty)$ such that $\Delta g(x_k)>0$ for all $k\in\mathbb N$, \[  \sum_{k\in\mathbb N}\Delta g(x_k)<\infty,\]  and $\Delta g(x)=0$ for all $x\in\mathbb{R}-\{x_k\ |\ k\in\mathbb N\}$. Define $g:\mathbb{R}\to\mathbb{R}$ as:
	\[  
	g(x)= \begin{dcases} 
		x, & x>0,\\
		-\sum_{{s\in}[x,0)}\Delta g({s}), & x\leq0.
	\end{dcases}
	\]  
	Fix $x_0=0$. We have that $g_n(x)=x^n$ for all $x\geq0$ and $n\in\mathbb N$. Consider the sum
	\begin{equation}
		\label{49657}
		 f(x)=\sum_{n=0}^\infty g_n(x).
	\end{equation}
	It is clear that for $x\in[0,1)$, the sum converges absolutely. Now, fix some $x<0$. If we take $g^m$ like in Section~\ref{1286}, the derivators $g^m$, on the left side of $0$, are derivators such that $(g^m)^C=0$ and the set of discontinuities is a set of isolated points. Take some $x_k\in[x,0)$. From a certain $m\in\mathbb N$, that $x_k$ will be a discontinuity point of $g^m$. Applying the formula of Proposition~\ref{gmondiscontinua} for points to the left side of $0$,
	\[  \left\lvert g_n^m(x)\right\rvert\geq n!\Delta g(x_k)^n.\]  
	Then
	\[  \left\lvert g_n(x)\right\rvert\geq n!\Delta g(x_k)^n,\]  
	and hence $\left\lvert g_n(x)\right\rvert\to \infty$ when $n$ tends to infinity. In particular,~\eqref{49657} does not converge for any $x<0$. Note that the $g_n$ are left-continuous and hence tend to $0$ as we get closer to $0$. However, fixing any $x<0$, the absolute value tends to infinity when $n$ tends to infinity. 
\end{exa}
\begin{pro}\label{sucesionizq}
Suppose $\{a_n\}_{n\in\mathbb N}\subset\mathbb F$ is such that the series $\sum_{n=0}^{\infty}\left\lvert a_n\right\rvert\left\lvert g_n(c)\right\rvert$ converges at some discontinuity point $c\in(-\infty,x_0)$ of $g$ ($\Delta g(c)\neq0$). Then there exists some constant $M>0$ such that 
\[  \left\lvert a_n\right\rvert\leq \frac{M^{n+1}}{n!}\]  
for all $n\geq0$.
\end{pro}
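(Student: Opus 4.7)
The plan is to establish a lower bound of the form $|g_n(c)|\geq n!\,\Delta g(c)^n$, from which the conclusion is a routine computation: since $\sum_{n\ge 0}|a_n||g_n(c)|$ converges, its general term is bounded by some $K>0$, hence $|a_n|\leq K/(n!\,\Delta g(c)^n)$. Setting $M:=\max\{K,1/\Delta g(c)\}$, one has $M\,\Delta g(c)\geq 1$ and $M\geq K$, so for $n\geq 1$,
\[
\frac{M^{n+1}}{n!} \;=\; \frac{M\,(M\,\Delta g(c))^n}{n!\,\Delta g(c)^n\cdot \Delta g(c)^{-n}}\;\geq\; \frac{M}{n!\,\Delta g(c)^n}\;\geq\; \frac{K}{n!\,\Delta g(c)^n}\;\geq\; |a_n|,
\]
and the case $n=0$ is the condition $M\geq K$.

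The first main step is to show that $|g_n(c)|\geq |g_n^B(c)|$. Theorem~\ref{ggcgb} gives
\[
g_n(c)=\sum_{k=0}^n \binom{n}{k}\,g_k^C(c)\,g_{n-k}^B(c).
\]
Because $g^C$ is continuous, Proposition~\ref{gmoncontinua} yields $g_k^C(c)=(g_1^C(c))^k$, and since $c<x_0$ and $g^C$ is nondecreasing, $g_1^C(c)\leq 0$; thus $g_k^C(c)$ has sign $(-1)^k$ when nonzero. On the other hand, the sign lemma of Section~\ref{cotassecc}, applied to the derivator $g^B$, ensures $g_{n-k}^B(c)$ has sign $(-1)^{n-k}$ when nonzero. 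Every summand therefore shares the common sign $(-1)^n$, so no cancellation occurs; $|g_n(c)|$ equals the sum of the absolute values of the terms, and is bounded below by the $k=0$ summand, which is $|g_n^B(c)|$.

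The second main step is to establish $|g_n^B(c)|\geq n!\,\Delta g(c)^n$. Set $h:=g^B$ and consider the approximations $h^m:=g^{B,m}$ from Section~\ref{1286}, each being a totally discontinuous derivator whose discontinuity set is isolated. Choose $m$ large enough that $\Delta h(c)=\Delta g(c)\geq 1/m$, so that $c\in D_{h^m}$ with $\Delta h^m(c)=\Delta g(c)$. Applying Proposition~\ref{gmondiscontinua} to $h^m$ centered at $x_0$, the quantity $|h_n^m(c)|/n!$ is a sum of nonnegative products; isolating the single summand corresponding to $\sigma_c(c)=n$ and $\sigma_c\equiv 0$ elsewhere gives
\[
\frac{|h_n^m(c)|}{n!}\geq \Delta g(c)^n.
\]
Proposition~\ref{glimite} applied to the derivator $h=g^B$ ensures $h^m_n(c)\to g_n^B(c)$ as $m\to\infty$, so passing to the limit yields the claim.

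The only real subtlety is the sign analysis of the first step, ensuring that the expansion from Theorem~\ref{ggcgb} does not involve cancellation at $c$; once that is pinned down, the approximation argument through $g^{B,m}$ is straightforward and the remaining algebra to extract $M$ is elementary.
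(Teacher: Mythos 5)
Your proof is correct and follows essentially the same route as the paper's: the key step in both is the lower bound $\left\lvert g_n(c)\right\rvert\ge n!\,\Delta g(c)^n$ (which the paper asserts tersely by citing Proposition~\ref{gmondiscontinua}, and which you justify in full via the sign analysis of Theorem~\ref{ggcgb} and the approximation $g^{B,m}\to g^B$ of Proposition~\ref{glimite}), after which the bound on $\left\lvert a_n\right\rvert$ is routine (you use boundedness of the general term of a convergent series where the paper uses the root test). The only blemish is a typo in your final display: the first ``equality'' is false as written because the factor $\Delta g(c)^{-n}$ should not appear in the denominator, but the intended chain $\frac{M^{n+1}}{n!}=\frac{M(M\Delta g(c))^n}{n!\,\Delta g(c)^n}\ge\frac{M}{n!\,\Delta g(c)^n}\ge\frac{K}{n!\,\Delta g(c)^n}\ge\left\lvert a_n\right\rvert$ is valid.
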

\begin{proof}
Applying Proposition~\ref{gmondiscontinua}, $\left\lvert g_n(c)\right\rvert\geq n!\Delta g(c)^n$. Hence $\sum_{n=0}^{\infty}\left\lvert a_n\right\rvert n! \Delta g(c)^n$ converges. By the root test, there exists some natural $m$ such that
\[  (\left\lvert a_n\right\rvert n!)^{\frac{1}{n}}\leq \frac{1+\varepsilon}{\Delta g(c)} \Rightarrow \left\lvert a_n\right\rvert\leq \frac{1}{n!}\left(\frac{1+\varepsilon}{\Delta g(c)}\right)^n \]  
for all $n\geq m$, where $\varepsilon>0$ is some positive number. Take \[  M=\max\left\{1,\frac{1+\varepsilon}{\Delta g(c)},\left\lvert a_0\right\rvert0!,\left\lvert a_1\right\rvert1!,\dots,\left\lvert a_{m-1}\right\rvert(m-1)!\right\}.\]  Then,
\[  \left\lvert a_n\right\rvert\leq \frac{M^{n+1}}{ n! }\]  
for all $n\geq0$.
\end{proof}

This means that if a $g$-monomial series manages to absolutely converge {at a} discontinuity point on the left side of $x_0$, then it must absolutely converge on $[x_0,+\infty)$, see Section~\ref{exponencial}.

\subsection{Stieltjes-analyticity}
Since asking for convergence on the left side of $x_0$ is a very strong hypothesis, recall Proposition~\ref{sucesionizq}, we decide to give this definition of Stieltjes-analytic function.
\begin{dfn}
	\label{defanal}
	Let $g:\mathbb{R}\to \mathbb{R}$ a derivator. Given $ f:\Omega\to\mathbb{F}$, and $\Omega\subset\mathbb{R}$ a (usual) open set. We say $ f$ is \emph{Stieltjes-analytic} on $\Omega$, if, for all $y\in\Omega$, there exist $\delta>0$ and $t\in\mathbb{R}$ such that $y\in(t,t+\delta)$ and a sequence $\{a_n\}_{n=0}^\infty\subset\mathbb{F}$ satisfying
	\[  f(x)=\sum_{n=0}^\infty a_n g_{t,n}(x)\]  
	for all $x\in(t,t+\delta)\subset\Omega${, and, moreover, the sum converges absolutely.}
\end{dfn}
With this definition, the function $f$ in Example~\ref{ejemploIII} is Stieltjes-analytic on $\Omega=(0,1)$. For any $y\in \Omega$, taking $t=0$ the definition is satisfied. Note how this differs from the usual definition, for a function to be analytic at any given point it has to be written as a power series centered at that point. This is something we do not ask for here. It is easy to prove that the function $f$ in Example~\ref{ejemploII} is Stieltjes-analytic on $\Omega=(-1,1)$, since in that case $g$ is a derivator whose set of discontinuities is a set of isolated points. \par Definition~\ref{defanal} is equivalent to the usual when $g=\operatorname{Id}$. The space of Stieltjes-analytic functions defined on the same domain is a vector space. We will see how Stieltjes-analyticity behaves with integration and derivation. To do so, we will present some results that relate these concepts to the series of $g$-monomials.
\begin{pro}
	\label{gcontsec}
	Let $\{f_n\}_{n\in\mathbb N}$ be a sequence of functions defined on $X\subset \mathbb{R}$ that converges uniformly to some function $f$. If $f_n$ is $g$-continuous for all $n\in\mathbb N$, then $f$ is $g$-continuous as well.
\end{pro}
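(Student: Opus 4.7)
The plan is to mimic the classical $3\varepsilon$-argument for uniform limits of continuous functions, with the only modification being that the ball at $x$ is taken in the pseudometric $d_g$ rather than in the Euclidean metric. The uniform convergence hypothesis is a purely pointwise notion (it makes no reference to any topology on the domain), so it transfers directly to this setting.

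Fix $x\in X$ and $\varepsilon>0$. First, I would use the uniform convergence $f_n\to f$ to select some $N\in\mathbb N$ such that
\[
|f_n(y)-f(y)|<\frac{\varepsilon}{3}\quad\text{for all }y\in X\text{ and all }n\geq N.
\]
Next, since $f_N$ is $g$-continuous at $x\in X$, there exists $\delta>0$ such that
\[
y\in B_g(x,\delta)\cap X\;\Longrightarrow\;|f_N(x)-f_N(y)|<\frac{\varepsilon}{3}.
\]
Finally, for any such $y$, the triangle inequality
\[
|f(x)-f(y)|\leq |f(x)-f_N(x)|+|f_N(x)-f_N(y)|+|f_N(y)-f(y)|<\varepsilon
\]
gives $g$-continuity of $f$ at $x$. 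Since $x\in X$ was arbitrary, $f$ is $g$-continuous on $X$.

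There is no genuine obstacle here: the proof is the standard one, and the only observation to make is that both the uniform bound on $|f_n-f|$ and the triangle inequality are independent of which topology governs continuity; only the control of $|f_N(x)-f_N(y)|$ uses the $g$-topology, and that control is supplied directly by the hypothesis that each $f_n$ is $g$-continuous. Hence the argument transposes verbatim from the classical case.
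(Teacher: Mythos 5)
Your proof is correct and is exactly the standard $3\varepsilon$ argument that the paper has in mind (it omits the proof, referring instead to Theorem~3.4 of the cited reference and noting the argument is practically identical). Nothing further is needed: the only point of contact with the $g$-topology is the control of $\left\lvert f_N(x)-f_N(y)\right\rvert$ on $B_g(x,\delta)\cap X$, which you handle correctly.
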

The proof of this statement is practically identical to that of \cite[Theorem 3.4]{RodrigoTheory}. As a consequence, we have that $\mathcal{BC}_g(X,\mathbb R)$ is a Banach space. It is easy to see now that any Stieltjes-analytic function is $g$-continuous in every point of their domain. In particular, any Stieltjes-analytic function is $g$-continuous.

\subsubsection{Stieltjes-analyticity and integration}

Lets see how series of $g$-monomials behave with integration. Thanks to the Dominated Convergence Theorem, we will prove absolute convergence of the series of integrals on the same set where the $g$-monomial series converges.

\begin{pro}
	\label{seriesint}
	Let $g:\mathbb{R}\to\mathbb{R}$ be a derivator and fix some $x_0\in\mathbb{R}$. Suppose that 
	\[  f(x)=\sum_{n=0}^\infty a_n g_n(x)\]  
	converges absolutely on $[c_1,c_2]$, for some $c_1,c_2\in\mathbb R$ such that $x_0\in(c_1,c_2)$. Then $f$ is $g$-integrable in $[c_1,c_2]$ and
	\[  \int_{x_0}^xf\operatorname{d}\mu=\sum_{n=0}^\infty \frac{a_n}{n+1} g_{n+1}(x)\]  
	for all $x\in[c_1,c_2]$.
	 Besides, the previous sum converges uniformly and absolutely on $[c_1,c_2]$.
\end{pro}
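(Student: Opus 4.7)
The plan is three-fold: establish $g$-integrability of $f$ by showing it is $g$-continuous and bounded, then swap sum and integral using uniform convergence on a set of finite $\mu_g$-measure, and finally verify uniform and absolute convergence of the resulting series via the Weierstrass M-test.

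First, I would show that $f \in \mathcal{BC}_g([c_1,c_2],\mathbb{F})$. The monotonicity arguments used in Proposition~\ref{convergenciags} give $|g_n(x)| \leq |g_n(c_1)| + g_n(c_2)$ for every $x\in[c_1,c_2]$ (on the right of $x_0$, $g_n$ is non-negative and non-decreasing; on the left, $|g_n|$ has constant sign and is non-increasing toward $x_0$). The absolute convergence at $c_1$ and $c_2$ therefore makes $\sum_n |a_n|(|g_n(c_1)|+g_n(c_2))$ a convergent numerical majorant, so the partial sums $f_N = \sum_{n=0}^N a_n g_n$ converge to $f$ uniformly on $[c_1,c_2]$. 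Each $f_N$ is $g$-continuous and bounded by Remark~\ref{monomiosderob}, so Proposition~\ref{gcontsec} yields $g$-continuity of $f$, and the uniform majorant gives boundedness. Proposition~\ref{primitiva} then provides $g$-integrability.

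Next, fix $x \in [x_0,c_2]$; the set $[x_0,x)$ has finite $\mu_g$-measure $g(x)-g(x_0)$, so uniform convergence of $f_N \to f$ allows termwise integration:
\[ \int_{[x_0,x)} f \operatorname{d}\mu_g = \lim_{N\to\infty}\sum_{n=0}^N a_n \int_{[x_0,x)} g_n \operatorname{d}\mu_g = \sum_{n=0}^\infty \frac{a_n}{n+1} g_{n+1}(x), \]
where the last equality is the defining recursion of the $g$-monomials. The case $x \in [c_1,x_0]$ is analogous, with the same sign convention built into the notation $\int_{x_0}^x$.

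Finally, to establish uniform and absolute convergence of $\sum_n \tfrac{a_n}{n+1} g_{n+1}$ on $[c_1,c_2]$, I would dominate each term by a convergent series independent of $x$. Using again that $g_n$ is non-negative and non-decreasing on $[x_0,c_2]$, for $x \in [x_0,c_2]$,
\[ \frac{g_{n+1}(x)}{n+1} = \int_{[x_0,x)} g_n \operatorname{d}\mu_g \leq g_n(c_2)\, g_1(c_2), \]
and symmetrically $|g_{n+1}(x)|/(n+1) \leq |g_n(c_1)|\,|g_1(c_1)|$ for $x \in [c_1,x_0]$. Hence for all $x \in [c_1,c_2]$,
\[ \left\lvert \frac{a_n}{n+1} g_{n+1}(x)\right\rvert \leq |a_n|\bigl( |g_n(c_1)|\,|g_1(c_1)| + g_n(c_2)\, g_1(c_2) \bigr), \]
and the right-hand side is summable by hypothesis. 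The Weierstrass M-test concludes. The main obstacle, as such, is minor: one must be careful to handle the intervals on either side of $x_0$ separately, since $g_n$ need not behave symmetrically around $x_0$, but in both cases the required monotonicity of $|g_n|$ away from $x_0$ follows directly from its integral definition.
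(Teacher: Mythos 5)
Your proof is correct and follows essentially the same route as the paper: establish $f\in\mathcal{BC}_g([c_1,c_2],\mathbb{F})$ via the endpoint majorants and Proposition~\ref{gcontsec}, integrate termwise, and control the integrated series by the monotonicity of $\left\lvert g_n\right\rvert$ away from $x_0$. The only difference is cosmetic: the paper invokes the Dominated Convergence Theorem (once for the interchange, once applied to $h(x)=\sum_n\left\lvert a_n\right\rvert\left\lvert g_n(x)\right\rvert$ together with the identity $\int_{x_0}^x\left\lvert g_n\right\rvert\operatorname{d}\mu_g=\left\lvert g_{n+1}(x)\right\rvert/(n+1)$), whereas you use uniform convergence on a set of finite $\mu_g$-measure and an explicit Weierstrass majorant $\left\lvert a_n\right\rvert\bigl(\left\lvert g_n(c_1)\right\rvert\left\lvert g_1(c_1)\right\rvert+g_n(c_2)g_1(c_2)\bigr)$, which if anything makes the claimed uniform convergence of the integrated series more explicit.
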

\begin{proof}
	Thanks to Proposition~\ref{gcontsec}, $f$ is $g$-continuous in $[c_1,c_2]$. Besides,
	\[  \left\lvert f(x)\right\rvert\leq \max_{i=1,2}\sum_{n=0}^\infty \left\lvert a_n\right\rvert\left\lvert g_n(c_i)\right\rvert.\]  
	Then $f\in\mathcal{BC}_g([c_1,c_2])$ and hence $f$ is $g$-integrable. Applying Dominated Convergence Theorem, for all $x\in[c_1,c_2]$,
	\begin{equation}
		\label{3485}
		\int_{x_0}^xf\operatorname{d}\mu=\lim_{m\to\infty}\sum_{n=0}^m a_n\int_{x_0}^x g_n\operatorname{d}\mu_g=\lim_{m\to\infty}\sum_{n=0}^m \frac{a_n}{n+1} g_{n+1}(x)=\sum_{n=0}^\infty \frac{a_n}{n+1} g_{n+1}(x).
	\end{equation}
	Denote $h$ as
	\[  h(x)=\sum_{n=0}^\infty \left\lvert a_n\right\rvert\left\lvert g_n(x)\right\rvert,\]  
	for $x\in[c_1,c_2]$. We have that $h$ is $g$-integrable since it is $g$-measurable and bounded. We can apply the Dominated Convergence Theorem to $h$ again so 
{
\begin{align*}
\sum_{n=0}^\infty \left\lvert \frac{a_n}{n+1}\right\rvert\left\lvert g_{n+1}(x)\right\rvert=\sum_{n=0}^\infty \left\lvert a_n\right\rvert \int_{[x_0,x)}\left\lvert g_n\right\rvert\operatorname{d}\mu_g= \int_{[x_0,x)} h\operatorname{d}\mu_g,\\
\sum_{n=0}^\infty \left\lvert \frac{a_n}{n+1}\right\rvert\left\lvert g_{n+1}(x)\right\rvert=\sum_{n=0}^\infty \left\lvert a_n\right\rvert \int_{[x,x_0)}\left\lvert g_n\right\rvert\operatorname{d}\mu_g= \int_{[x,x_0)} h\operatorname{d}\mu_g.
\end{align*}
The series~\eqref{3485} is absolutely convergent for $x\in[c_1,c_2]$.
}{We get uniform convergence on $[c_1,c_2]$ from Proposition~\ref{convergenciags}}. 
\end{proof}

\subsubsection{Stieltjes-analyticity and {differentiability}}

Although integrability behaves well with series of $g$-monomials, we have to ask for more assumptions to ensure the convergence of the series of derivatives. The derivative will exist at points where $\left\lvert g_1\right\rvert$ is strictly less than the extremes of the convergence interval. Note that this is also required in the usual case. However, here it leads to more problems due to the constancy intervals of $g$ --see Example~\ref{01324856}.
%
\begin{pro}
	\label{derseries}
	Let $g:\mathbb{R}\to\mathbb{R}$ be a derivator and fix some $x_0\in\mathbb{R}$. Suppose that
	\[  f(x)=\sum_{n=0}^\infty a_n g_n(x)\]  
	converges absolutely on $[c_1,c_2]$, for some $c_1,c_2\in\mathbb R$ such that $x_0\in(c_1,c_2)$. If $c\in(x_0,c_2)$ is such that $\left\lvert g_1(c)\right\rvert<\left\lvert g_1(c_2)\right\rvert$, the series
	\[  \sum_{n=1}^\infty n{a_n} g_{n-1}(x)\]  
	converges absolutely for $x\in[x_0,c]$. Besides, for any $x\in(x_0,c)-C_g$, 
	\begin{equation}\label{etiqueta1} f'_g(x)=\sum_{n=1}^\infty n{a_n} g_{n-1}(x).\end{equation}
	If{~\eqref{2222221354324} holds with $\Omega=(x_0,c)$} then~\eqref{etiqueta1} holds for all $x\in(x_0,c)$. Analogously, the same applies if ${c\in(c_1,x_0)}$ and $\left\lvert g_1(c)\right\rvert<\left\lvert g_1(c_1)\right\rvert$.
\end{pro}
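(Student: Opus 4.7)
The plan is to establish uniform absolute convergence of the derivative series on $[x_0, c]$ by the Weierstrass M-test, and then recover the derivative formula by integrating term by term and invoking the fundamental theorem of calculus, Proposition~\ref{primder}.

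First I would exploit that, for $x \geq x_0$, the monomials $g_{n-1}$ are non-negative and non-decreasing in $x$: indeed, $g_m(x) - g_m(y) = m\int_y^x g_{m-1}\operatorname{d}\mu_g \geq 0$ for $x_0 \leq y \leq x$ by Corollary~\ref{cotas}. Splitting the defining integral of $g_n(c_2)$ at $c$ and using this monotonicity,
\[
g_n(c_2) = g_n(c) + n\int_c^{c_2} g_{n-1}\operatorname{d}\mu_g \;\geq\; n\, g_{n-1}(c)\,\bigl(g(c_2) - g(c)\bigr).
\]
Since the hypothesis $|g_1(c)| < |g_1(c_2)|$ means $g(c_2) - g(c) > 0$, we get $n\, g_{n-1}(c) \leq g_n(c_2)/(g(c_2) - g(c))$, and summing against $|a_n|$,
\[
\sum_{n=1}^{\infty} n |a_n| g_{n-1}(c) \;\leq\; \frac{1}{g(c_2) - g(c)}\sum_{n=0}^{\infty} |a_n|\, g_n(c_2) \;<\; \infty.
\]
The same monotonicity gives $0 \leq g_{n-1}(x) \leq g_{n-1}(c)$ for $x \in [x_0, c]$, so the Weierstrass M-test provides uniform absolute convergence of $\sum n a_n g_{n-1}(x)$ on $[x_0, c]$.

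Next I would set $h(x) = \sum_{n=1}^{\infty} n a_n g_{n-1}(x)$. Uniform convergence together with $g$-continuity of each $g_{n-1}$ gives, by Proposition~\ref{gcontsec}, that $h \in \mathcal{BC}_g([x_0, c], \mathbb{F})$. Applying Proposition~\ref{seriesint} to $h$ and $[x_0, c]$ yields
\[
\int_{x_0}^{x} h\operatorname{d}\mu_g = \sum_{n=1}^{\infty} \frac{n a_n}{n}\, g_n(x) = f(x) - a_0, \qquad x\in[x_0,c],
\]
since $g_n(x_0) = 0$ for $n\geq 1$. Thus $f(x) = a_0 + \int_{x_0}^{x} h\operatorname{d}\mu_g$, and Proposition~\ref{primder} gives $f'_g(x) = h(x)$ for every $x \in (x_0, c) - C_g$; when $(x_0, c)$ satisfies condition~\eqref{2222221354324}, the derivative is moreover well-defined on all of $(x_0, c)$ (the value at $x \in (a_n, b_n) \subset C_g$ being inherited from $b_n \in (x_0, c)$), so the identity extends to every $x \in (x_0, c)$.

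The left-sided statement is handled analogously by sign-conscious versions of the estimates. On $(-\infty, x_0]$ one checks that $|g_{n-1}|$ is non-increasing in $x$ (using the integral identity $|g_n(x)| = n\int_{[x, x_0)} |g_{n-1}|\operatorname{d}\mu_g$), then splits
\[
|g_n(c_1)| = n\int_{[c_1, c)} |g_{n-1}|\operatorname{d}\mu_g + |g_n(c)| \;\geq\; n|g_{n-1}(c)|\bigl(g(c) - g(c_1)\bigr),
\]
where $|g_1(c)| < |g_1(c_1)|$ translates to $g(c) - g(c_1) > 0$. The remaining steps mirror the right-sided case, integrating $h$ over $[x, x_0)$ to recover $f$ and applying Proposition~\ref{primder}.

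The main obstacle is the opening estimate: extracting control on $n\, g_{n-1}(c)$ from convergence at $c_2$ cannot be done through a naive comparison $g_n(c_2) \sim g_1(c_2)^n$, since $g_n$ can be strictly smaller than $g_1^n$ (and the gap can be substantial, as seen in Example~\ref{ejemploII}). The trick is to work with the integral definition directly and rely on the monotonicity of $g_{n-1}$ on $[x_0, \infty)$; this is precisely what forces the strict inequality $|g_1(c)| < |g_1(c_2)|$ to appear in the hypothesis, as a positive denominator $g(c_2) - g(c)$ is indispensable.
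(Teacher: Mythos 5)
Your proof is correct, and its overall skeleton coincides with the paper's: establish absolute convergence of $\sum n a_n g_{n-1}$ at $c$, integrate term by term (Proposition~\ref{seriesint}), and recover the derivative via Proposition~\ref{primder}. The one place where you genuinely diverge is the key convergence estimate. The paper reads it off from the change-of-center machinery: by Proposition~\ref{cambiodepuntosuma} the recentered series $\sum_k A_k \left\lvert g_{c,k}(c_2)\right\rvert$ converges, the $k=1$ coefficient is exactly $\sum_n n\left\lvert a_n\right\rvert\left\lvert g_{n-1}(c)\right\rvert$, and since $\left\lvert g_{c,1}(c_2)\right\rvert = g(c_2)-g(c)>0$ that coefficient must be finite. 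You instead obtain the same inequality, $n\,g_{n-1}(c)\,\bigl(g(c_2)-g(c)\bigr) \le g_n(c_2)$, directly from the integral recursion and the monotonicity of $g_{n-1}$ on $[x_0,\infty)$ (and the sign-conscious analogue on the left of $x_0$). Your route is more elementary and self-contained — it does not invoke Proposition~\ref{relpolprop} or~\ref{cambiodepuntosuma} at all — while the paper's version makes the hypothesis $\left\lvert g_1(c)\right\rvert<\left\lvert g_1(c_2)\right\rvert$ appear naturally as the non-vanishing of $g_{c,1}(c_2)$ and reuses machinery already needed elsewhere. Both arguments apply Proposition~\ref{seriesint} in a one-sided fashion on $[x_0,c]$, which is harmless since its proof (dominated convergence) works on either half-interval, so this is not a gap in your write-up relative to the paper.
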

\begin{proof}
	Take $c\in(x_0,c_2)$ such that $\left\lvert g_1(c)\right\rvert<\left\lvert g_1(c_2)\right\rvert$. If we follow the proof of Proposition~\ref{cambiodepuntosuma}, we have that the series
	\begin{equation}\label{ssdlolk} \sum_{k=0}^\infty \left(\frac{1}{k!}\sum_{n=k}^\infty\left\lvert a_n\right\rvert\frac{n!}{(n-k)!}\left\lvert g_{n-k}(c)\right\rvert\right) \left\lvert g_{c,k}(c_2)\right\rvert\end{equation}
	converges. Therefore, since 
	\[  \left\lvert g_1(c)\right\rvert<\left\lvert g_1(c_2)\right\rvert\Leftrightarrow\left\lvert g_{c,1}(c_2)\right\rvert\neq0\]  
	and the coefficient associated to $\left\lvert g_{c,1}(c_2)\right\rvert$ in the series~\eqref{ssdlolk} is
	\begin{equation}\label{1544987622254125}\sum_{n=1}^\infty n\left\lvert a_n\right\rvert\left\lvert g_{n-1}(c)\right\rvert,\end{equation}
	the sum~\eqref{1544987622254125} must converge. In particular, we have that the series
	\[  \sum_{n=1}^\infty n{a_n} g_{n-1}(x)\]  
	converges absolutely on $[x_0,c]$. Applying Proposition~\ref{seriesint} to the previous sum, if $x\in[x_0,c]$,
	\[  f(x)-f(0)=\int_{[x_0,x)}\sum_{n=1}^\infty n{a_n} g_{n-1}(s)\operatorname{d}\mu_g(s).\]  
	Thanks to Proposition~\ref{primder}, 
	\[  f'_g(x)=\sum_{n=1}^\infty n{a_n} g_{n-1}(x)\]  
	for all $x\in(x_0,c)-C_g$. {If~\eqref{2222221354324} holds with $\Omega = (x_0,c)$}, the $g$-derivative is well defined at all points, so equality~\eqref{etiqueta1} holds for all $x\in(x_0,c)$. The same arguments are valid for points to the left of $x_0$. 
\end{proof}

\begin{thm}
	\label{1155}
	Let $g:\mathbb{R}\to\mathbb{R}$ be a derivator. If $f:\Omega\to\mathbb{F}$ is a Stieltjes-analytic function that satisfies:
{\rm
	\begin{enumerate}[noitemsep, itemsep=.1cm]
		\item[({$H$})]For all $y\in\Omega$, there exist $0<\delta$ and $t\in\mathbb{R}$ such that $y\in(t,t+\delta)$, $g^C(y)<g^C(t+\delta)$ and a sequence $\{a_n\}_{n=0}^\infty\subset\mathbb{F}$ satisfying
		\[  f(x)=\sum_{n=0}^\infty a_n g_{t,n}(x)\]  
		for all $x\in(t,t+\delta)\subset\Omega${, and, moreover, the sum converges absolutely.}

	\end{enumerate}}
Then $\Omega$ satisfies condition~\eqref{2222221354324}, $f$ is $g$-differentiable on $\Omega$ and ${f'_g:\Omega\to\mathbb{F}}$ is again a Stieltjes-analytic function that satisfies $({H})$.
\end{thm}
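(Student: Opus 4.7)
The plan is to verify the three conclusions in order, by applying Proposition~\ref{derseries} directly to the series provided by hypothesis~$(1)$, with a carefully chosen right endpoint of convergence. For each $y\in\Omega$ I extract from $(1)$ data $(t,\delta,\{a_n\})$ with $y\in(t,t+\delta)\subset\Omega$, $g^C(y)<g^C(t+\delta)$, and $f(x)=\sum_{n\ge 0}a_ng_{t,n}(x)$ absolutely and uniformly convergent on $(t,t+\delta)$.

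To verify~\eqref{2222221354324}, suppose $y\in(\alpha,\beta)\subset C_g$ for some connected component of $C_g$. Since $g$ is constant on $(\alpha,\beta)$ and that interval contains no jumps, the continuous part $g^C=g-g^B$ is also constant on $(\alpha,\beta)$; continuity of $g^C$ then yields $g^C(\beta)=g^C(y)<g^C(t+\delta)$, which by monotonicity forces $\beta<t+\delta$. Combined with $t<y<\beta$, this gives $\beta\in(t,t+\delta)\subset\Omega$.

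For the $g$-differentiability and for the series representation of $f'_g$, I would pick $c\in(y,t+\delta)$ and $c_2\in(c,t+\delta)$ such that $c\notin C_g$, $c\ne\beta'$ for every connected component $(\alpha',\beta')$ of $C_g$, and $g^C(y)<g^C(c)<g^C(c_2)$. Such a $c$ exists because $(y,t+\delta)\setminus C_g$ cannot be countable: if it were, then, using that $\mu_{g^C}$ assigns zero measure both to countable sets and to each component of $C_g$ (where $g^C$ is constant), we would get $g^C(x_1)=g^C(x_2)$ for every $x_1<x_2$ in $(y,t+\delta)$, contradicting $g^C(y)<g^C(t+\delta)$. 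These conditions on $c$ force the open interval $(t,c)$ to satisfy~\eqref{2222221354324}: every component $(\alpha',\beta')$ of $C_g$ meeting $(t,c)$ must have $\beta'\le c$ (else $c\in C_g$) and $\beta'\ne c$ by construction. From $g^C(c)<g^C(c_2)$ and monotonicity of $g^B$ I obtain $g(c)<g(c_2)$, that is $|g_{t,1}(c)|<|g_{t,1}(c_2)|$. Proposition~\ref{derseries} (applied one-sidedly, as its proof only uses absolute convergence of the series at the right endpoint $c_2$) then yields absolute and uniform convergence of $\sum_{n\ge 1}na_ng_{t,n-1}$ on $[t,c]$ together with the identity $f'_g(x)=\sum_{n\ge 1}na_ng_{t,n-1}(x)$ for $x\in(t,c)\setminus C_g$; since $(t,c)$ satisfies~\eqref{2222221354324}, the identity extends to all of $(t,c)$, and in particular to $y$.

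Setting $t':=t$ and $\delta':=c-t$, the reindexed derivative series $\sum_{k\ge 0}(k+1)a_{k+1}g_{t,k}$ represents $f'_g$ on $(t',t'+\delta')\subset\Omega$ absolutely and uniformly, with $y\in(t',t'+\delta')$ and $g^C(y)<g^C(c)=g^C(t'+\delta')$, so $f'_g$ satisfies $(1)$ at $y$; as $y\in\Omega$ was arbitrary, $f'_g$ is Stieltjes-analytic on $\Omega$ and satisfies $(1)$. The main obstacle throughout is the selection of $c$: one must simultaneously avoid the open set $C_g$, dodge the countable set of right endpoints $\{\beta'\}$ of its components, and preserve the strict inequality $g^C(y)<g^C(c)$. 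These three constraints are precisely what renders $(t,c)$ an ``inner'' interval satisfying~\eqref{2222221354324}, so that Proposition~\ref{derseries} delivers the derivative identity at every point of $(t,c)$, including any points of $C_g$ lying between $y$ and $c$.
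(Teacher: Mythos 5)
Your proposal is correct and rests on the same key ingredients as the paper's proof: Proposition~\ref{derseries} applied one\-/sidedly from the center $t$, the observation that $g^C$ is constant on each connected component of $C_g$ (so the strict inequality $g^C(y)<g^C(t+\delta)$ forces the right endpoint of any component through $y$ into $(t,t+\delta)$, giving condition~\eqref{2222221354324} for $\Omega$), and the extension of the derivative identity to points of $C_g$ once the working interval satisfies~\eqref{2222221354324}. The one place you genuinely diverge is in producing the right endpoint of that working interval: the paper shrinks $\delta$ to some $\delta'$ with $g^C(t+\delta')<g^C(t+\delta)$ and then iterates, pushing the interval of validity up to $t'=\sup\{x:g^C(x)<g^C(t+\delta)\}$ before checking~\eqref{2222221354324} for $(t,t')$; you instead make a single measure-theoretic selection of $c\notin C_g$ avoiding the countable set of right endpoints of components, so that $(t,c)$ satisfies~\eqref{2222221354324} at once and Proposition~\ref{derseries} needs to be invoked only one time. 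Both routes work; yours trades the iteration for the selection argument. One small point to tighten: proving that $(y,t+\delta)\setminus C_g$ is uncountable does not by itself yield a $c$ that also satisfies $g^C(y)<g^C(c)$. You should run the same $\mu_{g^C}$-argument on the open set $M=\{x\in(y,t+\delta):\,g^C(y)<g^C(x)<g^C(t+\delta)\}$, which is nonempty with $\mu_{g^C}(M)=g^C(t+\delta)-g^C(y)>0$, while $C_g$ and every countable set are $\mu_{g^C}$-null; hence $M\setminus\bigl(C_g\cup\{\beta'\}\bigr)\neq\emptyset$ and any $c$ there, together with a nearby $c_2$, has all the required properties. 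With that adjustment the argument is complete.
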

\begin{proof}
	Take $y\in\Omega$. By $({H})$, there are $0<\delta$, $t\in\mathbb{R}$ and a sequence $\{a_n\}_{n=0}^\infty\subset\mathbb{F}$ such that $y\in(t,t+\delta)$ and
	\[  f(x)=\sum_{n=0}^\infty a_n g_{t,n}(x)\]  
	for all $x\in(t,t+\delta]\subset\Omega$ and $g^C(y)<g^C(t+\delta)$ (we can assume that the series converges uniformly on $(t,t+\delta]$ choosing a suitable $\delta$). By the continuity of $g^C$, there exists $\delta'\in(0,\delta)$ such that $y\in(t,t+\delta')$ and $g^C(y)<g^C(t+\delta')<g^C(t+\delta)$. Applying Proposition~\ref{derseries}, the series
	\begin{equation} 
		\label{7756}
		\sum_{n=1}^\infty na_n g_{t,n-1}(x)
	\end{equation}
	converges absolutely on $[t,t+\delta']$. Besides, we know that $f'_g(x)$ exists at $x\in(t,t+\delta')-C_g$ and equals the series~\eqref{7756}. We can repeat the previous argument until we reach the following conclusion, for \[  t'=\sup\{x\in\mathbb{R}:g^C(x)<g^C(t+\delta)\}, \]  the series~\eqref{7756} converges absolutely on $[t,t')$ and equals $f'_g$ on $(t,t')-C_g$. Note that, if $x\in(t,t')$, then $g^C(x)<g^C(t')$. Suppose $x\in(t,t')\cap C_g$ and $x\in (a_n,b_n)\subset C_g$. We have that $g^C(x)=g^C(b_n)<g^C(t')$ hence $[x,b_n]\subset(t,t')\subset\Omega$ since $b_n<t'$. Note that this proves that both $(t,t')$ and $\Omega$ satisfy hypothesis~\eqref{2222221354324}. Then, the series~\eqref{7756} equals $f'_g$ on $(t,t')$ by Definition~\ref{derivadadef}. Choosing adequate $t$ and $\delta$ we have that $f'_g$ satisfies $({H})$ at $y$. Since $y$ was arbitrarily chosen, we have that $f'_g$ is Stieltjes-analytic.
\end{proof}
Note that the hypotheses made in Theorem~\ref{1155} are automatically fulfilled if $g$ is continuous and strictly increasing. Take $a,b\in\mathbb{R}$ such that $a<b$, $a\notin N^{-}_g$, $b\notin D_g\cup N^{+}_g\cup C_g$ and $[a,b]\subset\Omega$, if $f$ is a Stieltjes-analytic function that satisfies $(1)$, applying Theorem~\ref{1155} recursively we obtain that 
\[  f\in\mathcal{C}_g^\infty([a,b],\mathbb{F}).\]  
In any case, we have
\[  f\in\mathcal{C}_g^\infty(\Omega,\mathbb{F}).\]  
\begin{exa}
	\label{01324856}
	Take $g:\mathbb{R}\to\mathbb{R}$ given by
	\[  
	g(x)= \begin{dcases} 
		x, & x\leq 0, \\
		n, & x\in(n-1,n],\quad n\in\mathbb N.
	\end{dcases}
	\]  
%
%
%
	Fix $x_0=0$. It is clear that $g_n(x)=x^n$ for $x\in(-\infty,0]$. Now, $g_2(x)=0$ for $x\in(0,1]$, $g_3(x)=0$ for $x\in(0,2]$ and, in general, $g_n(x)=0$ for $x\in(0,n-1]$ for all $n\geq2$. In particular, any $g$-monomial series is finite for all $x\in(0,\infty)$, since for $n\in\mathbb N$ greater than some fixed natural, $g_n(x)=0$. Consider the series
	\[  f(x)=\sum_{n=0}^\infty n!g_n(x).\]  
	The series does not converge at points $x<0$, since the sequence $\{n!x^n\}_{n=0}^\infty$ does not tend to $0$ when $n$ tends to infinity. Even so, it converges absolutely at $[0,\infty)$, since for every point $x>0$ the sum is finite. In particular, we have that $f$ satisfies Definition~\ref{defanal} in $(0,\infty)$. In fact, $f$ has $g$-derivatives of all orders that are Stieltjes-analytic functions too. Consider now $f$ defined only in $(0,y)$, for some $y>0$. Take $n\in\mathbb N$ such that $n-1<y\leq n$. Since $g_{n+1}(x)=0$ for $x\in(0,n]$, the coefficients $\{k!\}_{k=0}^\infty$ that define $f$ are not unique anymore. We are free to choose the coefficients for $k\geq n+1$. 

	Note that we are in a similar situation to that of Example~\ref{55886}. Since $f$ is defined only on $(0,y)$, we can only differentiate at points in $(0,n-1]$. Its derivative can only be differentiated at points in $(0,n-2]$, and so on. The coefficients will tell us what values $f'_g$ takes on $(n-1,y]$, $f^{(2)}_g$ on $(n-2,y]$, etc.
\end{exa}
The last example proves that the uniqueness of the coefficients of a Stieltjes-analytic function is not ensured. Combining the arguments made in Theorem~\ref{1155} and Proposition~\ref{derseries} we obtain the following result.
\begin{pro}
	\label{derseries2}
	Let $g:\mathbb{R}\to\mathbb{R}$ be a derivator and fix some $x_0\in\mathbb{R}$. If
	\[  f(x)=\sum_{n=0}^\infty a_n g_n(x)\]  
	converges absolutely on $[c_1,c_2]$, for some $c_1,c_2\in\mathbb R$ such that $x_0\in(c_1,c_2)$, $g_1^C(c_2)>0$ and $g_1(c_1)<0$. If
	\[  t_2=\sup\{x\in\mathbb{R}:g_1^C(x)<g_1^C(c_2)\}\]  
	and
	\[  t_1=\inf\{x\in\mathbb{R}:g_1(c_1)<g_1(x)\},\]  
	the series
	\[  \sum_{n=k}^\infty a_n\frac{n!}{(n-k)!} g_{n-k}(x)\]  
	converges absolutely on $(t_1,t_2)$, for all $k\in\mathbb N$. Besides, $(t_1,t_2)$ satisfies~\eqref{2222221354324} and, for all ${x\in(t_1,t_2)}$, 
	\[  f^{(k)}_g(x)=\sum_{n=k}^\infty a_n\frac{n!}{(n-k)!} g_{n-k}(x).\]  
\end{pro}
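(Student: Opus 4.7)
The plan is to induct on $k$. The case $k=0$ is the hypothesis. Assume the statement for some $k\geq 0$, so that
\[S_k(x):=\sum_{n=k}^\infty a_n\frac{n!}{(n-k)!}g_{n-k}(x)\]
converges absolutely on $(t_1,t_2)$ and coincides there with $f^{(k)}_g$. After the substitution $m=n-k$, $S_k$ becomes a standard $g$-monomial series $\sum_{m=0}^\infty b_m g_m$ with $b_m=a_{m+k}(m+k)!/m!$, so Proposition~\ref{derseries} is directly applicable; its formal $g$-derivative is precisely $S_{k+1}$.

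The crucial geometric fact is that for every $y\in(x_0,t_2)$ one has $g_1^C(y)<g_1^C(c_2)$. Otherwise, monotonicity of $g^C$ would force it to be constant on $[y,c_2]$, placing that whole interval outside $\{x:g_1^C(x)<g_1^C(c_2)\}$ and forcing $t_2\leq y$, which contradicts $y<t_2$. Continuity of $g^C$ together with monotonicity of $g^B$ then produce $c'<c''$ in $(y,t_2)$ with $g_1(y)<g_1(c')<g_1(c'')$, which is exactly the strict inequality $|g_1(y)|<|g_1(c')|<|g_1(c'')|$ required by Proposition~\ref{derseries}. A symmetric analysis using $g_1(c_1)<0$ handles $y\in(t_1,x_0)$. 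I would also verify that $(t_1,t_2)$ satisfies~\eqref{2222221354324}: if a component $(a_n,b_n)\subset C_g$ meets $(t_1,t_2)$ at some $x$, then $g^C$ is constant on $[a_n,b_n]$, whence $g_1^C(b_n)=g_1^C(x)<g_1^C(c_2)$ forces $b_n<t_2$, and $b_n>x>t_1$, so $b_n\in(t_1,t_2)$.

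With these tools the inductive step is routine. Fix $y\in(x_0,t_2)$ and choose $c',c''$ as above; Proposition~\ref{convergenciags} then ensures that $S_k$ converges absolutely on a closed subinterval $[c_1^\sharp,c'']\subset(t_1,t_2)$ containing $x_0$ in its interior. Proposition~\ref{derseries} applied to this reindexed $S_k$ with $c''$ as right endpoint and $c=c'$ yields absolute convergence of $S_{k+1}$ on $[x_0,c']$ together with $(S_k)'_g=S_{k+1}$ on $(x_0,c')-C_g$, which extends to all of $(x_0,c')$ by the condition~\eqref{2222221354324} just verified. Combining this with the symmetric left-sided argument, and invoking Proposition~\ref{primder} to handle the centre $x_0$ itself via the fundamental theorem of calculus (since $S_{k+1}$ is $g$-continuous and bounded on any such closed subinterval), gives $f^{(k+1)}_g=S_{k+1}$ on all of $(t_1,t_2)$. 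The main obstacle—and the substantive content—is isolating $t_1$ and $t_2$ as precisely the thresholds beyond which the strict inequality demanded by Proposition~\ref{derseries} cannot be arranged, and securing condition~\eqref{2222221354324} for the limiting interval; the rest is just iteration of Proposition~\ref{derseries} in the spirit of Theorem~\ref{1155}.
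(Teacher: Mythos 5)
Your right-hand side is fine: because $t_2$ is defined through the continuous map $g_1^C$, the intermediate points $c'<c''$ with $\left\lvert g_1(y)\right\rvert<\left\lvert g_1(c')\right\rvert<\left\lvert g_1(c'')\right\rvert$ always exist, and iterating Proposition~\ref{derseries} does cover all of $[x_0,t_2)$; your verification of condition~\eqref{2222221354324} for $(t_1,t_2)$ is also correct. The gap is in the ``symmetric analysis'' on the left, which is not symmetric: $t_1$ is defined through $g_1$ itself, which is only left-continuous, so no intermediate-value argument is available. Concretely, whenever $g$ has a constancy interval abutting $t_1$ from the right (one checks that in this situation necessarily $\Delta g(t_1)>0$, so $g$ jumps at $t_1$ and then stays flat), there are points $y\in(t_1,x_0)$ with $g_1(d)=g_1(y)$ for every $d\in(t_1,y]$. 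For such $y$ your inductive step breaks down from $k=1$ onwards: $S_k$ is then only known to converge on $(t_1,t_2)$, no longer at $c_1$, so every admissible left endpoint $d_1$ and every $c\le y$ satisfy $g_1(d_1)=g_1(c)$, and the strict inequality $\left\lvert g_1(c)\right\rvert<\left\lvert g_1(d_1)\right\rvert$ demanded by Proposition~\ref{derseries} cannot be arranged; the induction therefore never establishes convergence of $S_{k+1}$, nor the derivative identity, on the whole of $(t_1,x_0)$. (Such $y$ need not lie in $C_g$, e.g.\ $g\equiv g(t_1^+)$ on $(t_1,y]$ with $g$ strictly increasing just to the right of $y$, so this cannot be dismissed as a point where the derivative is computed elsewhere.)

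The paper sidesteps this entirely by not iterating. It applies the change-of-center formula of Proposition~\ref{cambiodepuntosuma} once, recentering at the point $x$ and evaluating at $c_1$ (respectively $c_2$), where the \emph{original} series is known by hypothesis to converge absolutely; the coefficient of $g_{x,k}(c_1)$ in that recentered series is $\tfrac{1}{k!}$ times the $k$-th derivative series at $x$, so the single nonvanishing statement $g_{x,k}(c_1)\ne0$ — which only requires $g_1(x)\ne g_1(c_1)$, i.e.\ $x>t_1$, with no strict comparison between two interior points — yields absolute convergence of \emph{all} the derivative series at $x$ simultaneously. Only after that does it identify the sums as $f^{(k)}_g$ by the recursion through Propositions~\ref{seriesint} and~\ref{primder}. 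To repair your argument you would need to replace the left-hand induction by this one-shot extraction (or otherwise strengthen the inductive hypothesis so that the known convergence region for $S_k$ does not recede from $c_1$ to $t_1$ after the first step).
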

\begin{proof}
	Note that $x_0\in(t_1,t_2)$. Besides, if $x\in(t_1,t_2)$, then $g_1^C(x)<g_1^C(t_2)$ and $g_1(t_1)<g_1(x)$. That is because $g_1^C(t_2)=g_1^C(c_2)$ and $g_1(t_1)=g_1(c_1)$. Take $x\in(x_0,t_2)$. We have that $x<c_2$. Hence, from Proposition~\ref{cambiodepuntosuma}, the series
	\[  
	\sum_{k=0}^\infty \left(\frac{1}{k!}\sum_{n=k}^\infty\left\lvert a_n\right\rvert\frac{n!}{(n-k)!}\left\lvert g_{n-k}(x)\right\rvert\right) \left\lvert g_{x,k}(c_2)\right\rvert
	\]  
	converges. Now, since $g_1^C(x)<g_1^C(c_2)$, $g_{x,k}(c_2)\neq0$ for all $k\in\mathbb N$, so the series
	\[  
	\sum_{n=k}^\infty\left\lvert a_n\right\rvert\frac{n!}{(n-k)!}\left\lvert g_{n-k}(y)\right\rvert
	\]  
	converges for all $k\in\mathbb N$ and $y\in[x_0,x]$. Applying Proposition~\ref{seriesint} and Proposition~\ref{primder} recursively we have that
	\begin{equation}
		\label{444444}
		f^{(k)}_g(y)=\sum_{n=k}^\infty a_n\frac{n!}{(n-k)!}g_{n-k}(y)\end{equation}
	for $y\in(x_0,x)-C_g$. With $x$ approaching $t_2$, the series~\eqref{444444} converges absolutely on $[x_0,t_2)$ and equality~\eqref{444444} holds for all $y\in(x_0,t_2)$.
	To the left of $x_0$, the argument is the same. If $x\in(t_1,x_0)$ we only need that $g_1(c_1)<g_1(x)$ to ensure $g_{x,k}(c_1)\neq0$ for all $k\in\mathbb N$. Note that, if the hypotheses are satisfied at both sides, we can $g$-differentiate at $x_0\in(t_1,t_2)$.
\end{proof}

Note that if $g=\operatorname{Id}$, the last result guarantees that, if $(-R,R)$ is the convergence interval of a power series, it is the convergence interval of the series of its derivatives as well. We have achieved this without applying the Cauchy–Hadamard theorem \cite[Lemma 1.1.6]{Krantz1992}.

\subsubsection{Coefficients of a Stieltjes-analytic function}

We have seen that the relationship between a Stieltjes-analytic function and its coefficients can be more complex than it seems, see Example~\ref{01324856}. Even so, that relation behaves like in the classical case if we ask for the right hypotheses.
\begin{thm}
	\label{2216547855}
	Let $f$ be a Stieltjes-analytic function defined on $\Omega$ {and fix $y\in\Omega$. Assume $f$ satisfies $(H)$, that is, there are $\delta>0$, $t\in\mathbb{R}$ and a sequence $\{a_n\}_{n=0}^\infty\subset\mathbb{F}$ such that $y\in(t,t+\delta)$,
	\[  f(x)=\sum_{n=0}^\infty a_n g_{t,n}(x)\]  
	for all $x\in(t,t+\delta)\subset\Omega$ and $g^C(y)<g^C(t+\delta)$ with the series converging absolutely}. Suppose that $t\in\Omega$. Then
	\[  \frac{f^{(n)}_g(t)}{n!}=a_n\]  
	for $n\geq0$.
\end{thm}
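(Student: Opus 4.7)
My plan is to differentiate the series term by term and evaluate the resulting series at $x=t$, using that $g_{t,0}(t)=1$ and $g_{t,m}(t)=0$ for $m\geq 1$ (Remark~\ref{monomiosderob}) so that only the $n=k$ term survives.

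First I would establish by induction on $k\geq 0$ the identity
\[  f^{(k)}_g(x)=F_k(x):=\sum_{n=k}^{\infty} a_n\frac{n!}{(n-k)!}g_{t,n-k}(x)\]
for $x$ in a right-neighborhood of $t$. The base case $k=0$ is the given representation. For the inductive step I would apply the one-sided form of Proposition~\ref{derseries} on $[t,c]$ for any $c\in(t,t')$, with $t':=\sup\{x:g^C(x)<g^C(t+\delta)\}$; the hypothesis $g^C(y)<g^C(t+\delta)$ ensures $\left\lvert g_1(c)\right\rvert<\left\lvert g_1(t+\delta)\right\rvert$, which yields absolute convergence of $F_{k+1}$ on $[t,c]$ and the pointwise equality $(F_k)'_g(x)=F_{k+1}(x)$ on $(t,c)-C_g$. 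The argument from the proof of Theorem~\ref{1155} then extends the equality to all of $(t,c)$ via condition~\eqref{2222221354324}, and letting $c\to (t')^-$ delivers the identity on $(t,t')$.

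Next I would evaluate $F_k$ at $x=t$. By Remark~\ref{monomiosderob} only the $n=k$ term survives, giving $F_k(t)=a_k\,k!$. Moreover, $F_k$ is $g$-continuous on $[t,t')$ by Proposition~\ref{gcontsec}, while $f^{(k)}_g$ is $g$-continuous on $\Omega$ by iterated application of Theorem~\ref{1155}; the two agree on $(t,t')$, so the remaining task is to establish $f^{(k)}_g(t)=F_k(t)=a_k\,k!$.

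The main obstacle is the extension of the equality to $x=t$. When $t\notin D_g$, the derivator $g$ is continuous at $t$, so $g$-continuity of both sides reduces to usual continuity from the right, yielding $f^{(k)}_g(t)=\lim_{x\to t^+}F_k(x)=F_k(t)=a_k\,k!$. For $t\in D_g$, I would exploit that $f$ is Stieltjes-analytic at the point $y=t\in\Omega$ itself (Definition~\ref{defanal}): this produces a second representation $f(x)=\sum c_n g_{\hat t,n}(x)$ centered at some $\hat t<t$ and valid on an open interval containing $t$ as an interior point. Proposition~\ref{cambiodepuntosuma} allows re-centering at $t$, yielding $f(x)=\sum \tilde a_k g_{t,k}(x)$ on $[t,\hat t+\hat\delta)$ which is now valid at $x=t$ itself, so $\tilde a_0=f(t)$. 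Applying the same two-step argument to this second series establishes $\tilde a_k=f^{(k)}_g(t)/k!$; the identification $a_k=\tilde a_k$ then follows from comparing the two series on the common interval of convergence and using the recurrence $(a_k-\tilde a_k)k!+(a_{k+1}-\tilde a_{k+1})(k+1)!\,\Delta g(t)=0$ obtained by taking right-$g$-limits at $t$.
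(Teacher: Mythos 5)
The first half of your proposal — term-by-term differentiation via Propositions~\ref{derseries} and~\ref{derseries2}, evaluation of $F_k$ at $t$ using $g_{t,m}(t)=0$ for $m\geq 1$, and the case $t\notin D_g$ handled by continuity of $g$ at $t$ — is essentially the paper's argument and is fine. Where you diverge is the case $t\in D_g$: the paper does not introduce a second representation at all; it computes $\lim_{x\to t^+}f^{(k)}_g(x)=a_kk!+a_{k+1}(k+1)!\,\Delta g(t)$ from uniform convergence and then runs an induction on $k$ using the jump formula $f^{(k)}_g(t)=\bigl(f^{(k-1)}_g(t^+)-f^{(k-1)}_g(t)\bigr)/\Delta g(t)$ to peel off $a_kk!$ at each stage.

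Your route through a second series has a genuine gap at the final identification $a_k=\tilde a_k$. Setting $d_k=(a_k-\tilde a_k)k!$, your recurrence reads $d_k+\Delta g(t)\,d_{k+1}=0$, whose general solution is $d_k=d_0\bigl(-\Delta g(t)\bigr)^{-k}$; nothing forces $d_0=0$. Indeed, the $g$-monomial series with coefficients $\tfrac{1}{k!}\bigl(-\Delta g(t)^{-1}\bigr)^{k}$ is exactly $\operatorname{exp}_g\bigl(-\Delta g(t)^{-1};t\bigr)$, which by Remark~\ref{remexpigmult} vanishes identically on $(t,+\infty)$ because the factor $1+\lambda\Delta g(t)$ is zero, yet equals $1$ at $x=t$. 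So when $t\in D_g$ two distinct coefficient sequences can represent the same function on $(t,t+\delta)$ (coefficients centered at a jump are not unique, in the spirit of Example~\ref{01324856}), and comparing the two series on the common interval yields only the recurrence above — it cannot rule out $d_0\neq 0$. A secondary issue: ``applying the same two-step argument to the second series'' is not enough to get $\tilde a_k=f^{(k)}_g(t)/k!$ for $k\geq 1$, since passing from the open interval $(t,t')$ to the endpoint $t$ is precisely the hard step; you still need the difference-quotient induction at the jump (this is what the paper does), with your identity $\tilde a_0=f(t)$ serving as its base case. In short, your construction does produce a sequence $\{\tilde a_k\}$ satisfying $\tilde a_k=f^{(k)}_g(t)/k!$, but the transfer back to the originally given $\{a_k\}$ — the actual claim of the theorem — cannot be completed by series comparison, and this is the step that would fail.
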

\begin{proof}
	From Theorem~\ref{1155}, we have $g$-derivatives of $f$ of all orders at $t\in\Omega$. From Proposition~\ref{derseries2}, exists $t'\in(y,t+\delta)$ such that
	\begin{equation}
		\label{44444444}
		f^{(k)}_g(x)=\sum_{n=k}^\infty a_n\frac{n!}{(n-k)!}g_{t,n-k}(x)
	\end{equation}
	for all $x\in(t,t')$ and $k\in\mathbb N$. Take $t''\in(t,t')$ to ensure uniform convergence of the series~\eqref{44444444} on $[t,t'']$. Then,
	\begin{align*}
		\lim_{x\to t^+}f^{(k)}_g(x)&=\lim_{m\to\infty}\sum_{n=k}^m \lim_{x\to t^+}a_n\frac{n!}{(n-k)!}g_{t,n-k}(x)\\
		&=a_kk!+a_{k+1}(k+1)!\Delta g(t),
	\end{align*}
	for all $k\in\mathbb N$. Now, if $\Delta g(t)=0$, $f^{(k)}_g$ is continuous at $t$ and
	\[  f^{(k)}_g(t)=\lim_{x\to t^+}f^{(k)}_g(x)=a_kk!.\]  
	Hence, \[  \frac{f^{(k)}_g(t)}{k!}=a_k\]  for all $k\geq0$. If $\Delta g(t)\neq0$, $t$ is a discontinuity point of $g$ and then, for $k\geq1$,
	\[  
	f^{(k)}_g(t)=\lim_{x\to t^+}\frac{f^{(k-1)}_g(x)-f^{(k-1)}_g(t)}{g(x)-g(t)}=a_{k}k!.
	\]  
	Hence,
	\[  \frac{f^{(k)}_g(t)}{k!}=a_k\]  
	for all $k\geq0$.
\end{proof}
\begin{rem}
	Note that, since $f$ satisfies the hypotheses of Theorem~\ref{1155}, $f\in\mathcal{C}_g^\infty(\Omega,\mathbb{F})$ and $f$ allows $g$-derivatives of all orders on $C_g$. If $t\in C_g$, then $\Delta g(t)=0$ and the argument we gave in Theorem~\ref{2216547855} holds.
\end{rem}

\section{Differential equations and applications}
\label{exponencial}
\subsection{Differential equations}

Assume from now on that $\infty\notin N_g^+$, just to guarantee intervals not bounded from above satisfy condition~\eqref{2222221354324}. Many of the things we say here are true without that assumption.
\par The concept of Stieltjes-analytic function was born with the aim of solving differential equations, especially, linear differential equations. We now give a method to solve any higher order linear homogeneous Stieltjes differential equations with constant coefficients that works for some nonhomogeneous cases as well. Fix some $x_0\in\mathbb{R}$, consider the initial value problem 
\begin{equation}
	\label{01111}
	\begin{dcases}
		v^{(m)}_g(x)=\sum_{k=0}^{m-1}\lambda_k v^{(k)}_g(x), & \lambda_k\in\mathbb{F},\\
		v^{(k)}_g(x_0)=c_k,& c_k\in\mathbb{F}, \ k\in\{0,\dots,m-1\}.
	\end{dcases}
\end{equation}
We can study whether there is a Stieltjes-analytic solution to this problem. Suppose that a Stieltjes-analytic solution exists and we can center its $g$-monomial series on $x_0$. Then,
\[  v(x)=\sum_{n=0}^{\infty}a_ng_n(x)\]  
for some coefficients $\{a_n\}_{n\in\mathbb N}\subset\mathbb R$. From problem~\eqref{01111} and
\[  v^{(k)}_g(x)=\sum_{n=k}^{\infty}a_n\frac{n!}{(n-k)!}g_{n-k}(x)\]  
we obtain the following difference equation by matching the coefficients:
\begin{equation}
	\label{011111}
	\begin{dcases}
		a_{n+m}(n+m)!=\sum_{k=0}^{m-1}\lambda_k a_{n+k}(n+k)!, & \lambda_k\in\mathbb{F}, \ n\geq0,\\
		a_kk!=c_k,& c_k\in\mathbb{F}, \ k\in\{0,\dots,m-1\}.
	\end{dcases}
\end{equation}

\begin{thm}
	\label{6665444548132184}
	Let $\lambda_0, \lambda_1, \dots, \lambda_{m-1} \in \mathbb{F}$. Given the $m^{\text{th}}$-order linear difference equation with constant coefficients 
	\begin{equation}
		\label{0111116543216854}
		a_{n+m}+ \lambda_{m-1} a_{n+m-1} +\dots + \lambda_{0} a_n =0,\quad n\geq0.\end{equation}
	If $c_0, c_1, \dots, c_{m-1}$ are real numbers, there is a unique solution of~\eqref{0111116543216854}, that satisfies
	\[  a_0=c_0, \,\,a_1 = c_1, \,\,\dots, \,\,a_{m-1}=c_{m-1}.\]  

\end{thm}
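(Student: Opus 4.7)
The plan is to prove the theorem by a direct strong induction on $n$, observing that the recurrence~\eqref{0111116543216854} can be rewritten as
\[
a_{n+m} = -\lambda_{m-1} a_{n+m-1} - \lambda_{m-2} a_{n+m-2} - \dots - \lambda_0 a_n, \quad n\geq 0,
\]
which expresses $a_{n+m}$ explicitly as a function of the preceding $m$ terms $a_n, a_{n+1}, \dots, a_{n+m-1}$. So once the first $m$ values of the sequence are prescribed, every subsequent value is forced.

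For existence, I would define the sequence $\{a_n\}_{n\geq 0}$ recursively: set $a_0 = c_0, a_1 = c_1, \dots, a_{m-1} = c_{m-1}$, and then, having defined $a_0, \dots, a_{n+m-1}$ for some $n \geq 0$, set
\[
a_{n+m} := -\sum_{k=0}^{m-1} \lambda_k\, a_{n+k}.
\]
By construction, this sequence satisfies both the recurrence and the initial conditions, giving existence.

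For uniqueness, suppose $\{a_n\}_{n\geq 0}$ and $\{b_n\}_{n\geq 0}$ are two sequences satisfying~\eqref{0111116543216854} together with the initial values $c_0, \dots, c_{m-1}$. I would argue by strong induction on $n$ that $a_n = b_n$ for every $n\geq 0$. The base case $n = 0, 1, \dots, m-1$ is immediate from $a_k = c_k = b_k$. For the inductive step, assuming $a_j = b_j$ for all $j \leq n+m-1$ with $n\geq 0$, the recurrence forces
\[
a_{n+m} = -\sum_{k=0}^{m-1}\lambda_k a_{n+k} = -\sum_{k=0}^{m-1}\lambda_k b_{n+k} = b_{n+m},
\]
completing the induction.

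There is no serious obstacle here; the argument is essentially the observation that the shifted recurrence is an explicit formula for $a_{n+m}$, so the problem is a well-posed recursive definition. The only subtlety worth noting is that the coefficient of $a_{n+m}$ in~\eqref{0111116543216854} is $1$ (rather than some potentially zero coefficient), which is precisely what makes the forward recursion uniquely solvable; this is built into the statement of the theorem.
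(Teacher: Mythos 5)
Your argument is correct and complete. The key observation — that the recurrence can be solved forward for $a_{n+m}$ because its leading coefficient is $1$, so existence follows by recursive definition and uniqueness by strong induction — is exactly what the statement needs, and your handling of both directions is sound. The paper itself does not prove this theorem; it simply cites an external reference (\cite[Theorem 4.3]{DE}), which establishes a stronger result including explicit formulas for the solutions in terms of the roots of the characteristic polynomial. Your proof is therefore more elementary and self-contained than what the paper offers: it buys you exactly the existence and uniqueness claimed, without the machinery of characteristic roots, which is all that is used downstream (the explicit form of the solution is never needed; only the bound of Lemma~\ref{lemasequences}, proved separately, matters for convergence). The one refinement worth making explicit is that the statement allows $\lambda_k\in\mathbb{F}$ and the solution sequence should accordingly be taken in $\mathbb{F}$ (the statement's restriction of the initial data to real numbers notwithstanding); your induction works verbatim over either field.
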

{This} result can be found in \cite[Theorem 4.3]{DE}, along with the explicit solutions of~\eqref{0111116543216854}. From Theorem~\ref{6665444548132184}, there is a unique sequence $\{a_n\}_{n=0}^\infty$ that solves problem~\eqref{011111}. Consider then the $g$-monomial series defined by $\{a_n\}_{n=0}^\infty$. As we will prove in Section~\ref{higherorder}, that function is well defined and actually solves problem~\eqref{01111}. In particular, we have that the problem~\eqref{01111} admits Stieltjes-analytic solutions.

We will for now focus on the first order linear problem as it will help us later to solve higher order equations. Consider the problem
\begin{equation}
	\label{01111111}
	\begin{dcases}
		v'_g(x)=\lambda v(x), \\
		v_g(x_0)=1.
	\end{dcases}
\end{equation}
The associated difference equation would be
\[  
\begin{dcases}
	a_{n+1}(n+1)!=\lambda a_nn!, & n\geq1,\\
	a_0=1.
\end{dcases}
\]  
We then obtain the sequence
\[  a_n=\frac{\lambda^n}{n!}.\]  
In the classical case we obtain the exponential function when we consider the power series. The exponential is clearly a solution of~\eqref{01111111} when $g=\operatorname{Id}$. We will see that this actually translates to the general case.
\subsection{The exponential series}
\label{exposeries}
Due to how the literature understands the concept of exponential function associated with a derivator \cite{RodrigoTheory,Onfirst}, we will talk about the exponential series instead of the exponential function. Later, we will see how the two concepts are related. In any case, we will prove that the exponential series in general solves the differential equation~\eqref{01111111}, as well as certain properties that are deduced from the series of $g$-monomials. Let $g:\mathbb{R}\to \mathbb{R}$ be a derivator, $\lambda\in\mathbb{F}-\{0\}$ and fix some $x_0\in \mathbb{R}$. Consider the series
\[  
\sum_{n=0}^\infty \lambda^n \frac{g_n(x)}{n!}.
\]  
Take $x>x_0$. We have that
\[  
\sum_{n=0}^\infty \left\lvert \lambda\right\rvert^n \frac{g_n(x)}{n!}\leq \sum_{n=0}^\infty \left\lvert \lambda\right\rvert^n \frac{g_1(x)^n}{n!}\leq e^{\left\lvert \lambda\right\rvert g_1(x)}.
\]  
Hence, the series converges absolutely on $[x_0,\infty)$. Thus, the previous series defines a Stieltjes-analytic function on the set $(x_0,\infty)$. Take now $x<x_0$ such that $x\in B_g(x_0,\left\lvert \lambda\right\rvert^{-1})$. Since $g$ is left-continuous, the ball $B_g(x_0,\left\lvert \lambda\right\rvert^{-1})$ contains a neighborhood $(x_0-\delta,x_0]$ for some $\delta>0$. We have that

\begin{equation}
	\label{0003215445987}
	\sum_{n=0}^\infty \left\lvert \lambda\right\rvert^n \left\lvert \frac{g_n(x)}{n!}\right\rvert\leq \sum_{n=0}^\infty \left\lvert \lambda\right\rvert^n \left\lvert g_1(x)\right\rvert^n<\infty,
\end{equation}
since $\left\lvert \lambda g_1(x)\right\rvert<1$. We have assured absolute convergence on $B_g(x_0,\left\lvert \lambda\right\rvert^{-1})\cup[x_0,\infty)$ and, hence, on some neighborhood of $x_0$.

\begin{dfn}
	Given $s\in\mathbb{R}$, define \emph{the exponential series associated to $g$ centered at $s$} as the function given by the series
	\[  
	\operatorname{exp}_g(\lambda;s)(x)=\sum_{n=0}^\infty \lambda^n \frac{g_{s,n}(x)}{n!}
	\]  
	on those points where the sum converges absolutely.
\end{dfn}

Choose some $s\in B_g(x_0,\left\lvert \lambda\right\rvert^{-1})$ such that $s<x_0$. Take the exponential series associated to $g$ centered at $s$. Note first that the series
\begin{equation}
	\label{13756}
	\sum_{n=0}^\infty\left\lvert \lambda\right\rvert^n \left\lvert \frac{g_{s,n}(x)}{n!}\right\rvert
\end{equation}
converges absolutely for $x\in B_g(s,\left\lvert \lambda\right\rvert^{-1})\cup[s,\infty)$, applying the same argument we did for $x_0$. Thanks to the product formula for absolutely convergent series \cite[Theorem 3.50]{Rudin}, for some fixed $s$ and all $x\in B_g(s,\left\lvert \lambda\right\rvert^{-1})\cup[s,\infty)$, we have that 
\begin{equation*}
	\begin{aligned}
		\left(\sum_{n=0}^\infty \left\lvert \lambda\right\rvert^{n}\left\lvert \frac{g_{n}(s)}{n!}\right\rvert\right)\left(\sum_{n=0}^\infty\left\lvert \lambda\right\rvert^n \left\lvert \frac{g_{s,n}(x)}{n!}\right\rvert \right)&=\sum_{n=0}^\infty \sum_{k=0}^n \left\lvert \lambda\right\rvert^{k}\left\lvert \frac{g_{k}(s)}{k!}\right\rvert \left\lvert \lambda\right\rvert^{n-k}\left\lvert \frac{g_{s,n-k}(x)}{(n-k)!}\right\rvert\\
		&=\sum_{n=0}^\infty \left\lvert \lambda\right\rvert^{n}\sum_{k=0}^n \left\lvert \frac{g_{k}(s)}{k!}\right\rvert \left\lvert \frac{g_{s,n-k}(x)}{(n-k)!}\right\rvert.
	\end{aligned}
\end{equation*}
Since, applying the change of center formula of Proposition~\ref{relpolprop}, we have that
\[  
\sum_{n=0}^\infty \left\lvert \lambda\right\rvert^n \left\lvert \frac{g_n(x)}{n!}\right\rvert\leq\sum_{n=0}^\infty \left\lvert \lambda\right\rvert^{n}\sum_{k=0}^n \left\lvert \frac{g_{k}(s)}{k!}\right\rvert \left\lvert \frac{g_{s,n-k}(x)}{(n-k)!}\right\rvert,
\]  
the series
\[  
\sum_{n=0}^\infty \lambda^n \frac{g_n(x)}{n!}
\]  
converges absolutely on $B_g(s,\left\lvert \lambda\right\rvert^{-1})\cup[s,\infty)$, which is a bigger set than the one we calculated on~\eqref{0003215445987}. We could try to prove that $\operatorname{exp}_g(\lambda;x_0)$ is defined in the whole real line taking balls of radius $\left\lvert \lambda\right\rvert^{-1}$ recursively. However, if there is a discontinuity with a jump bigger than $\left\lvert \lambda\right\rvert^{-1}$ this process is invalid. Take any $x_0\in\mathbb{R}$, we call $\Omega_{x_0}$ the maximal interval where 
\[  
\sum_{n=0}^\infty \lambda^n \frac{g_{n}(x)}{n!}
\]  
converges absolutely. We have the following result.

\begin{pro}
	Let $g:\mathbb{R}\to \mathbb{R}$ be a derivator and fix some $x_0\in\mathbb{R}$. If $\Delta g(x)<\left\lvert \lambda\right\rvert^{-1}$ for all $x<x_0$, then $\Omega_{x_0}=\mathbb{R}$.
\end{pro}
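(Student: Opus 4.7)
The plan is to push the iterative ball-extension argument already sketched in the excerpt (before the definition of $\operatorname{exp}_g(\lambda;s)$) all the way to $-\infty$. Set $r = |\lambda|^{-1}$. Since absolute convergence on $[x_0,\infty)$ is already established from the bound $\sum|\lambda|^n g_n(x)/n!\le e^{|\lambda|g_1(x)}$, only the leftward extension is at issue. I would introduce the set
\[
I := \Bigl\{\,s \in (-\infty,x_0]\ :\ \sum_{n=0}^\infty |\lambda|^n \tfrac{|g_n(s)|}{n!}<\infty\,\Bigr\},
\]
and recall that $|g_n(x)|$ is non-increasing in $x$ on $(-\infty,x_0]$ (immediate from $g_n(x)=-n\int_{[x,x_0)}g_{n-1}\,\mathrm d\mu_g$), so membership $s\in I$ is equivalent to absolute convergence on all of $[s,\infty)$.

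The first ingredient is the \emph{extension lemma}, which is essentially just the calculation already displayed in the excerpt with $x_0$ replaced by an arbitrary $s\in I$: if $s\in I$, then the product of absolutely convergent series
\[
\Bigl(\sum |\lambda|^n \tfrac{|g_n(s)|}{n!}\Bigr)\Bigl(\sum|\lambda|^n\tfrac{|g_{s,n}(x)|}{n!}\Bigr)
\]
dominates $\sum|\lambda|^n|g_n(x)|/n!$ via the change-of-center formula in Proposition~\ref{relpolprop}, giving $B_g(s,r)\cap(-\infty,s]\subset I$. In particular, by left-continuity of $g$ at $s$, some interval $(s-\delta,s]$ lies in $I$, so $I$ is downward-open at each of its points.

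The heart of the proof is then a supremum/infimum argument. Set $\alpha := \inf I$; note $\alpha < x_0$ since $I$ already contains a left neighbourhood of $x_0$. Assume for contradiction that $\alpha > -\infty$ and split into two cases. If $\alpha \in I$, the extension lemma produces an interval $(\alpha-\delta,\alpha]\subset I$ contradicting $\alpha=\inf I$. If $\alpha\notin I$, pick $s_k\in I$ with $s_k\downarrow\alpha$; then $g(s_k)\to g(\alpha^+)=g(\alpha)+\Delta g(\alpha)$, so $g(s_k)-g(\alpha)\to\Delta g(\alpha)$. \textbf{This is precisely where the hypothesis enters}: since $\alpha<x_0$, $\Delta g(\alpha)<r$, hence for $k$ large $\alpha\in B_g(s_k,r)$, and the extension lemma applied at $s_k$ forces $\alpha\in I$, a contradiction.

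Combining $I=(-\infty,x_0]$ with convergence on $[x_0,\infty)$ yields $\Omega_{x_0}=\mathbb R$. The main obstacle I anticipate is the second case of the inf argument: one has to make sure the candidate boundary point $\alpha$ is actually engulfed by some ball $B_g(s_k,r)$ of a point $s_k$ already in $I$, and this fails exactly when $\Delta g(\alpha)\ge r$ — which is the scenario ruled out by the hypothesis. The rest (left-continuity giving downward-openness, monotonicity of $|g_n|$ on $(-\infty,x_0]$, the product-series domination) is routine consequence of material already proved in the excerpt.
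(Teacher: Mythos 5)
Your proposal is correct and follows essentially the same route as the paper: both take the infimum of the convergence set, use the product-of-series/change-of-center domination (the calculation at~\eqref{13756}) to show that membership of $s$ propagates to all of $B_g(s,\left\lvert\lambda\right\rvert^{-1})$, and invoke the hypothesis $\Delta g<\left\lvert\lambda\right\rvert^{-1}$ exactly to guarantee that a point slightly to the right of the infimum has a ball reaching strictly past it. The only cosmetic difference is that you split into the cases $\alpha\in I$ and $\alpha\notin I$, whereas the paper handles both at once by choosing $s>t$ with $\left\lvert g(s)-g(t)\right\rvert<\left\lvert\lambda\right\rvert^{-1}$ and then a $t'<t$ still inside $B_g(s,\left\lvert\lambda\right\rvert^{-1})$ via left-continuity.
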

\begin{proof}
	Clearly, if $\Omega_{x_0}$ is not bounded from below, then $\Omega_{x_0}=\mathbb{R}$. Suppose otherwise and take \[  t=\inf \,\Omega_{x_0}\in\mathbb{R}.\]  Since \[  \lim_{x\to t^+}g(x)-g(t)=\Delta g(t)<\left\lvert \lambda\right\rvert^{-1},\]  there exists $s>t$ such that $\left\lvert g(s)-g(t)\right\rvert<\left\lvert \lambda\right\rvert^{-1}$. Since $s\in\Omega_{x_0}$, we can repeat the calculations made in~\eqref{13756} and deduce that
	\[  
	\sum_{n=0}^\infty \lambda^n \frac{g_n(x)}{n!}
	\]  
	converges absolutely on $B_g(s,\left\lvert \lambda\right\rvert^{-1})$. Now, we can choose $t'<t$ in such a way that $\left\lvert g(s)-g(t')\right\rvert<\left\lvert \lambda\right\rvert^{-1}$ and, hence, $[t',s]\subset B_g(s,\left\lvert \lambda\right\rvert^{-1})\subset \Omega_{x_0}$, which contradicts that $t$ is the infimum of $\Omega_{x_0}$.
\end{proof}
\begin{cor}
\label{omegas}
If $\Omega_{x_0}$ is bounded from below, then $t=\inf \,\Omega_{x_0}\in\mathbb{R}$ is such that $\Delta g(t)\geq \left\lvert \lambda\right\rvert^{-1}$, which means $\Omega_{x_0}=(t,+\infty)$. Besides,
\[  t=\sup\{s\in D_g\cap(-\infty,x_0)\,\left\lvert \, \Delta g(s)\geq\right\rvert\lambda|^{-1}\}.\]  

\end{cor}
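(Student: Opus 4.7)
My plan is to establish the three asserted facts --- namely $\Delta g(t)\ge|\lambda|^{-1}$, $\Omega_{x_0}=(t,+\infty)$, and $t=\sup S$ with $S:=\{c\in D_g\cap(-\infty,x_0):\Delta g(c)\ge|\lambda|^{-1}\}$ --- by pushing the ball-extension argument in the proof of Proposition~\ref{convexpolol} to its limit and pairing it with a pointwise lower bound for $|g_n(c)|$ at left-hand discontinuity points. First I would note that the opening calculation of this subsection yields $[x_0,+\infty)\subseteq\Omega_{x_0}$, and that Proposition~\ref{convergenciags} forces $\Omega_{x_0}$ to be an interval, so under the bounded-below hypothesis $\Omega_{x_0}$ takes the form $(t,+\infty)$ or $[t,+\infty)$ with $t=\inf\Omega_{x_0}\le x_0$.

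For the inequality $\Delta g(t)\ge|\lambda|^{-1}$, I would argue by contradiction, mimicking the end of the proof of Proposition~\ref{convexpolol}. Assuming $\Delta g(t)<|\lambda|^{-1}$, the limit $\lim_{s\to t^+}(g(s)-g(t))=\Delta g(t)$ provides some $s>t$ with $|g(s)-g(t)|<|\lambda|^{-1}$; such $s$ lies in $\Omega_{x_0}$, so the centre-change and product-formula argument of Proposition~\ref{convexpolol} extends absolute convergence to $B_g(s,|\lambda|^{-1})$. Left-continuity of $g$ at $t$ then places some interval $[t',s]$ with $t'<t$ inside this ball, contradicting $t=\inf\Omega_{x_0}$. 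Hence $\Delta g(t)\ge|\lambda|^{-1}$, so $t\in D_g$ and (since $x_0\in\Omega_{x_0}$) $t<x_0$, which places $t\in S$ and already gives $t\le\sup S$.

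Granted a pointwise bound $|g_n(c)|\ge n!\,\Delta g(c)^n$ valid for every $c\in D_g\cap(-\infty,x_0)$ and every $n\in\mathbb N$, the remaining claims drop out at once: for any $c\in S$,
\[ \sum_{n=0}^{\infty}|\lambda|^n\,\frac{|g_n(c)|}{n!}\ \ge\ \sum_{n=0}^{\infty}(|\lambda|\,\Delta g(c))^n\ =\ +\infty, \]
so $c\notin\Omega_{x_0}$. In particular $t\notin\Omega_{x_0}$, forcing $\Omega_{x_0}=(t,+\infty)$, and every $c\in S$ satisfies $c\le t$, i.e., $\sup S\le t$; combined with the previous paragraph this yields $t=\sup S$.

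The hard part will be justifying the bound $|g_n(c)|\ge n!\,\Delta g(c)^n$ for an arbitrary derivator, since the explicit formula in Proposition~\ref{gmondiscontinua} is only stated when $g^C=0$ with isolated $D_g$. My strategy is to invoke Theorem~\ref{ggcgb} to write $g_n(c)=\sum_{k=0}^n\binom{n}{k}g_k^C(c)\,g_{n-k}^B(c)$ and to observe that, for $c<x_0$, every summand carries the common sign $(-1)^n$: the factor $g_k^C(c)=g_1^C(c)^k$ (using Proposition~\ref{gmoncontinua}, which applies since $g^C$ is continuous) has sign $(-1)^k$ because $g_1^C(c)\le 0$, while $g_{n-k}^B(c)$ has sign $(-1)^{n-k}$ by the explicit formula in Proposition~\ref{gmondiscontinua}. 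Consequently $|g_n(c)|\ge|g_n^B(c)|$, and I would bound $|g_n^B(c)|$ by approximating $g^B$ with the derivators $g^{B,m}$ of Section~\ref{1286}, which have isolated discontinuities: Proposition~\ref{gmondiscontinua} applied to $g^{B,m}$ (for $m$ large enough that $c\in D_g^m$) yields $|g_n^{B,m}(c)|\ge n!\,\Delta g(c)^n$, and passing to the limit via Proposition~\ref{glimite} recovers the desired inequality for $g^B$ and hence for $g$.
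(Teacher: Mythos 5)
Your proposal is correct, and it assembles exactly the ingredients the paper intends for this (unproved) corollary: the contradiction argument from the proof of Proposition~\ref{convexpolol} to get $\Delta g(t)\ge|\lambda|^{-1}$, and the lower bound $|g_n(c)|\ge n!\,\Delta g(c)^n$ at left-hand discontinuity points (the same bound underlying Proposition~\ref{sucesionizq}) to force divergence at $t$ and at every point of $S$, which yields $\Omega_{x_0}=(t,+\infty)$ and $t=\sup S$. Your justification of that lower bound for an arbitrary derivator --- via Theorem~\ref{ggcgb}, the common sign of the summands, and the approximation $g^{B,m}\to g^B$ of Proposition~\ref{glimite} --- is in fact more careful than the paper, whose Proposition~\ref{sucesionizq} cites Proposition~\ref{gmondiscontinua} directly even though its hypotheses ($g^C=0$, isolated discontinuities) need not hold.
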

\begin{pro}
\label{expcambiopunto}
	For all $t\in\mathbb{R}$, $s\in\Omega_t$ and $x\in\Omega_s$ we have that $x\in\Omega_t$ and
	\[  
	\operatorname{exp}_g(\lambda;t)(s)\operatorname{exp}_g(\lambda;s)(x)=\operatorname{exp}_g(\lambda;t)(x).
	\]  
\end{pro}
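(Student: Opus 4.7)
The plan is to reduce the claim to the product formula for absolutely convergent series (\cite[Theorem~3.50]{Rudin}) via the change-of-centre identity of Proposition~\ref{relpolprop}. Writing
\[ g_{t,n}(x)=\sum_{k=0}^{n}\binom{n}{k} g_{t,k}(s)\, g_{s,n-k}(x), \]
and dividing by $n!$, the combinatorial coefficient reorganizes into $\frac{1}{k!(n-k)!}$, so formally
\[ \sum_{n=0}^{\infty}\lambda^{n}\frac{g_{t,n}(x)}{n!}=\sum_{n=0}^{\infty}\lambda^{n}\sum_{k=0}^{n}\frac{g_{t,k}(s)}{k!}\,\frac{g_{s,n-k}(x)}{(n-k)!}, \]
which is exactly the Cauchy product of $\operatorname{exp}_g(\lambda;t)(s)$ and $\operatorname{exp}_g(\lambda;s)(x)$. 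Once absolute convergence is established, this identity yields both $x\in\Omega_t$ and the desired equality simultaneously.

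First I would establish $x\in\Omega_t$. Since $s\in\Omega_t$ and $x\in\Omega_s$, the series
\[ A=\sum_{n=0}^{\infty}|\lambda|^{n}\frac{|g_{t,n}(s)|}{n!},\qquad B=\sum_{n=0}^{\infty}|\lambda|^{n}\frac{|g_{s,n}(x)|}{n!} \]
both converge. By the Cauchy product for absolutely convergent series,
\[ A\cdot B=\sum_{n=0}^{\infty}|\lambda|^{n}\sum_{k=0}^{n}\frac{|g_{t,k}(s)|}{k!}\,\frac{|g_{s,n-k}(x)|}{(n-k)!}<\infty. \]
Applying the triangle inequality to the identity of Proposition~\ref{relpolprop} gives
\[ \frac{|g_{t,n}(x)|}{n!}\le\sum_{k=0}^{n}\frac{|g_{t,k}(s)|}{k!}\,\frac{|g_{s,n-k}(x)|}{(n-k)!}, \]
so the comparison test forces $\sum_{n\ge 0}|\lambda|^{n}|g_{t,n}(x)|/n!<\infty$; hence $x\in\Omega_{t}$.

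With absolute convergence in hand, I would invoke \cite[Theorem~3.50]{Rudin} once more, now applied to the signed series, to obtain
\[ \operatorname{exp}_g(\lambda;t)(s)\,\operatorname{exp}_g(\lambda;s)(x)=\sum_{n=0}^{\infty}\lambda^{n}\sum_{k=0}^{n}\frac{g_{t,k}(s)}{k!}\,\frac{g_{s,n-k}(x)}{(n-k)!}. \]
The inner sum is precisely $g_{t,n}(x)/n!$ by Proposition~\ref{relpolprop}, so the right-hand side equals $\operatorname{exp}_g(\lambda;t)(x)$, completing the proof. The only place that requires care is making sure one never swaps sums without absolute convergence; this is precisely why the majorizing step in the previous paragraph must be carried out first, and it is the main (though mild) obstacle.
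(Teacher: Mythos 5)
Your proposal is correct and follows essentially the same route as the paper: the authors also establish $x\in\Omega_t$ by taking the Cauchy product of the two absolute-value series and bounding $\lvert g_{t,n}(x)\rvert/n!$ via the triangle inequality applied to the change-of-centre formula of Proposition~\ref{relpolprop}, and then apply the product formula for absolutely convergent series to the signed series to get the identity. The only difference is presentational — the paper compresses the convergence step into the phrase ``repeating the calculations of~\eqref{13756}'', which you have simply written out in full.
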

\begin{proof}\belowdisplayskip=-12pt
	Repeating the calculations of~\eqref{13756} we have that $x\in\Omega_t$. Again, from the product formula for absolutely convergent series and Proposition~\ref{relpolprop}, we have that
	\begin{equation*}
		\begin{aligned}
			\operatorname{exp}_g(\lambda;t)(s)\operatorname{exp}_g(\lambda;s)(x)&=\sum_{n=0}^\infty \lambda^{n}\frac{g_{t,n}(s)}{n!}\sum_{n=0}^\infty\lambda^n \frac{g_{s,n}(x)}{n!}\\
			&=\sum_{n=0}^\infty \sum_{k=0}^n \lambda^{n-k}\frac{g_{t,n-k}(s)}{(n-k)!}\lambda^k \frac{g_{s,k}(x)}{k!} =\sum_{n=0}^\infty\lambda^n \frac{g_{t,n}(x)}{n!}\\
			&=\operatorname{exp}_g(\lambda;t)(x).
		\end{aligned}
	\end{equation*}
\end{proof}
Applying the last result, it can be proven that $\operatorname{exp}_g(\lambda;t)$ is Stieltjes-analytic on $\Omega_{t}$. Translating Proposition~\ref{expcambiopunto} to the classical case,
\[  \operatorname{exp}(s-t)\operatorname{exp}(x-s)=\operatorname{exp}(t-x).\]  

\begin{pro}
	\label{expprodgcgb}
	Let $g:\mathbb{R}\to \mathbb{R}$ be a derivator and fix some $x_0\in\mathbb{R}$. The series
	\begin{equation}
		\label{63548}
		\sum_{n=0}^\infty \lambda^n \frac{g_n(x)}{n!}
	\end{equation}
	converges absolutely if and only if the series
	\begin{equation}
		\label{32154}
		\sum_{n=0}^\infty \lambda^n \frac{g_n^{C}(x)}{n!},\quad\sum_{n=0}^\infty \lambda^n \frac{g_n^{B}(x)}{n!}
	\end{equation}
	converge absolutely. Besides, for all $x\in\Omega_{x_0}$,
	\begin{equation}
		\label{32154000003265}
		\operatorname{exp}_g(\lambda;x_0)(x)=\operatorname{exp}_{g^C}(\lambda;x_0)(x)\operatorname{exp}_{g^B}(\lambda;x_0)(x).\end{equation}
\end{pro}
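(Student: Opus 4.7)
The plan is to reduce the entire statement to the decomposition furnished by Theorem~\ref{ggcgb}, namely
\[ g_n(x)=\sum_{k=0}^n\binom{n}{k}g_k^C(x)\,g_{n-k}^B(x), \]
which, after dividing by $n!$, shows that $\{\lambda^n g_n(x)/n!\}_n$ is exactly the Cauchy-product sequence of $\{\lambda^n g_n^C(x)/n!\}_n$ and $\{\lambda^n g_n^B(x)/n!\}_n$. All the real content of the proof is therefore to pass from this identity to its \emph{absolute-value} version, and then apply two standard tools: Tonelli for non-negative double sums (or Theorem~\ref{sumasij}) for the biconditional, and the product formula for absolutely convergent series (Rudin, Theorem~3.50, already used in Proposition~\ref{expcambiopunto}) for the factorization~\eqref{32154000003265}.

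The first step, which is the genuine technical point, is to establish
\[ |g_n(x)|=\sum_{k=0}^n\binom{n}{k}|g_k^C(x)|\,|g_{n-k}^B(x)|\qquad\text{for every }x\in\mathbb R. \]
For $x\ge x_0$, the sign lemma in Subsection~\ref{cotassecc}, applied separately to the derivators $g^C$ and $g^B$, yields $g_k^C(x)\ge 0$ and $g_{n-k}^B(x)\ge 0$, so every summand is already non-negative. For $x<x_0$, the same lemma gives $\sgn g_k^C(x)=(-1)^k$ and $\sgn g_{n-k}^B(x)=(-1)^{n-k}$, so each product $g_k^C(x)g_{n-k}^B(x)$ has sign $(-1)^n$; all summands in the expansion of $g_n(x)$ carry the same sign, and the identity follows by passing the absolute value inside the sum. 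This is the only step where the delicate left-side behaviour of Stieltjes monomials has to be tracked carefully.

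Once the absolute-value identity is available, I would write, using Tonelli's theorem for sums of non-negative extended-real terms (equivalently, Theorem~\ref{sumasij} applied to the partial sums),
\[ \sum_{n=0}^\infty|\lambda|^n\frac{|g_n(x)|}{n!}=\sum_{n=0}^\infty\sum_{k=0}^n\!\left(|\lambda|^k\frac{|g_k^C(x)|}{k!}\right)\!\!\left(|\lambda|^{n-k}\frac{|g_{n-k}^B(x)|}{(n-k)!}\right)=\left(\sum_{k=0}^\infty|\lambda|^k\frac{|g_k^C(x)|}{k!}\right)\!\!\left(\sum_{j=0}^\infty|\lambda|^j\frac{|g_j^B(x)|}{j!}\right), \]
with all identities interpreted in $[0,+\infty]$. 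The left-hand side is finite if and only if both factors on the right are, which is precisely the biconditional between absolute convergence of~\eqref{63548} and of the two series in~\eqref{32154}.

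Finally, for $x\in\Omega_{x_0}$, the three series all converge absolutely, so Rudin's product formula for absolutely convergent series applies verbatim, exactly as in the proof of Proposition~\ref{expcambiopunto}, giving
\[ \operatorname{exp}_{g^C}(\lambda;x_0)(x)\cdot\operatorname{exp}_{g^B}(\lambda;x_0)(x)=\sum_{n=0}^\infty\lambda^n\sum_{k=0}^n\frac{g_k^C(x)}{k!}\cdot\frac{g_{n-k}^B(x)}{(n-k)!}=\sum_{n=0}^\infty\lambda^n\frac{g_n(x)}{n!}=\operatorname{exp}_g(\lambda;x_0)(x), \]
which is~\eqref{32154000003265}. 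The only non-routine step is the sign-tracking argument that produces the absolute-value version of Theorem~\ref{ggcgb}; after that, the conclusion is an immediate Cauchy-product computation.
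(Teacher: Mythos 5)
Your proposal is correct and follows essentially the same route as the paper: both rest on the decomposition of Theorem~\ref{ggcgb}, the observation that all summands $g_k^C(x)g_{n-k}^B(x)$ share the same sign (so the identity survives taking absolute values), and the Cauchy product formula for absolutely convergent series. The only cosmetic difference is in the forward implication, where the paper uses the direct comparison $\left\lvert g_n^\star(x)\right\rvert\leq\left\lvert g_n(x)\right\rvert$ while you read it off the product identity; your version is fine because each factor has $n=0$ term equal to $1$ and is therefore bounded below by $1$, so finiteness of the product forces finiteness of both factors.
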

\begin{proof}
	If the series~\eqref{63548} converges absolutely, then those in~\eqref{32154} do as well, since $\left\lvert g^{\star}_n(x)\right\rvert\leq\left\lvert g_n(x)\right\rvert$ for all $n\in \mathbb N$, $x\in\mathbb{R}$ and $\star\in\{B,C\}$. Suppose the series~\eqref{32154} converge absolutely, then
	\begin{align}
		\nonumber\sum_{n=0}^\infty \left\lvert \lambda\right\rvert^n \left\lvert \frac{g_n^{C}(x)}{n!}\right\rvert\sum_{n=0}^\infty \left\lvert \lambda\right\rvert^n \left\lvert \frac{g_n^{B}(x)}{n!}\right\rvert&=\sum_{n=0}^\infty\sum_{k=0}^n \left\lvert \lambda\right\rvert^k \left\lvert \frac{g_k^{C}(x)}{k!}\right\rvert \left\lvert \lambda\right\rvert^{n-k} \left\lvert \frac{g_{n-k}^{B}(x)}{(n-k)!}\right\rvert \\
		\nonumber&=\sum_{n=0}^\infty\left\lvert \lambda\right\rvert^n\sum_{k=0}^n \left\lvert \frac{g_k^{C}(x)}{k!}\frac{g_{n-k}^{B}(x)}{(n-k)!}\right\rvert=\sum_{n=0}^\infty\left\lvert \lambda\right\rvert^n \left\lvert \frac{g_n(x)}{n!}\right\rvert.
	\end{align}
	In particular,~\eqref{63548} converges absolutely. Repeating the same calculation without the absolute values we have that
	\[  \operatorname{exp}_g(\lambda;x_0)(x)=\operatorname{exp}_{g^C}(\lambda;x_0)(x)\operatorname{exp}_{g^B}(\lambda;x_0)(x).\qedhere\]  
\end{proof}
Note that, in fact, the formula~\eqref{formulaseries} was already suggesting equality~\eqref{32154000003265}. Since we have that $\operatorname{exp}_{g^C}(\lambda;x_0)(x)=e^{\lambda g_1(x)}$, the exponential series associated to $g$ converges if and only if the exponential series associated to $g^B$ converges. 

\begin{pro}
\label{expigmultpro}
Let $g:\mathbb{R}\to\mathbb{R}$ be a derivator such that $g^C=0$. Fix some $x_0\in \mathbb R$. Then
\begin{equation}\label{expigmult}
\operatorname{exp}_g(\lambda;x_0)(x)=\prod_{y\in [x_0,x)\cap D_g} (1+\lambda \Delta g(y))
\end{equation}
for all $x\geq x_0$.
\end{pro}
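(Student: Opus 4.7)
I would split the argument into two stages: (i) the case where $D_g$ is locally finite, handled directly via Proposition~\ref{gmondiscontinua}, and (ii) the general case, obtained by approximating $g$ with the derivators $g^m = g^{B,m}$ from Section~\ref{1286}. Note at the outset that $g = g^B$ (because $g^C = 0$) and that the infinite product on the right converges absolutely, since $\sum_{y \in [x_0,x) \cap D_g}|\lambda \Delta g(y)| \leq |\lambda|(g(x) - g(x_0)) < \infty$.

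For stage (i), if $D_g \cap [x_0, x) = \{y_1, \dots, y_N\}$ is finite, Proposition~\ref{gmondiscontinua} yields
\[\frac{g_n(x)}{n!} = \sum_{\substack{S \subseteq \{y_1, \dots, y_N\} \\ |S| = n}} \prod_{y \in S} \Delta g(y),\]
which vanishes for $n > N$. Summing over $n$ and regrouping by the underlying support $S \subseteq \{y_1, \dots, y_N\}$ gives the finite-product expansion
\[\operatorname{exp}_g(\lambda; x_0)(x) = \sum_{S \subseteq \{y_1, \dots, y_N\}} \prod_{y \in S} \lambda \Delta g(y) = \prod_{k=1}^{N}(1 + \lambda \Delta g(y_k)),\]
which is exactly the identity we seek.

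For stage (ii), each $g^m$ has locally finite jump set $D^m_g$, and stage (i) applied to $g^m$ yields $\operatorname{exp}_{g^m}(\lambda;x_0)(x) = \prod_{y \in [x_0,x) \cap D^m_g}(1 + \lambda \Delta g(y))$. I then pass to the limit $m \to \infty$ on both sides. On the product side, $D^m_g \cap [x_0, x) \uparrow D_g \cap [x_0, x)$, and absolute convergence delivers $\prod_{y \in [x_0,x) \cap D^m_g}(1 + \lambda \Delta g(y)) \to \prod_{y \in [x_0,x) \cap D_g}(1 + \lambda \Delta g(y))$. On the series side, Proposition~\ref{glimite} gives termwise convergence $g^m_n(x) \to g_n(x)$, while Corollary~\ref{cotas} together with the elementary estimate $g^m_1(x) \leq g_1(x)$ (for $x \geq x_0$, since $g^{B,m}$ retains only a subset of the jumps of $g$) supplies the summable majorant $|\lambda|^n g^m_n(x)/n! \leq |\lambda|^n g_1(x)^n / n!$. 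Dominated convergence for series then yields $\operatorname{exp}_{g^m}(\lambda;x_0)(x) \to \operatorname{exp}_g(\lambda;x_0)(x)$, completing the proof. The main technical point is orchestrating these two limits simultaneously; both rest on the total-variation bound $\sum \Delta g(y) \leq g(x) - g(x_0) < \infty$, which is what makes absolute convergence available on both sides of the identity.
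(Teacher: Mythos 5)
Your proof is correct, and while stage (i) coincides with the paper's treatment of the finite case (both regroup the elementary-symmetric-function formula of Proposition~\ref{gmondiscontinua} into a finite product), your stage (ii) takes a genuinely different route. The paper attacks the infinite case head-on: it fixes an arbitrary enumeration $\{t_n\}$ of $[x_0,x)\cap D_g$ and shows by an $\varepsilon/2$ splitting (tail of the exponential series plus finitely many partial elementary symmetric sums) that the partial products converge unconditionally to $\operatorname{exp}_g(\lambda;x_0)(x)$; this implicitly needs the identity $g_n(x)/n!=\sum_{|S|=n}\prod_{y\in S}\Delta g(y)$ for a general jump set, which the paper attributes to Proposition~\ref{gmondiscontinua} even though that proposition is only stated for isolated discontinuities. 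You instead route everything through the approximation machinery of Section~\ref{1286}: apply the finite case to $g^m$, then pass to the limit using Proposition~\ref{glimite} with the dominating bound $\left\lvert\lambda\right\rvert^n g_1(x)^n/n!$ on the series side and absolute convergence of the product on the other. This buys a cleaner justification of the step the paper glosses over, at the cost of invoking the convergence theory of infinite products (that $\sum\left\lvert a_n\right\rvert<\infty$ gives an unconditionally convergent product whose partial products over any exhaustion converge) as a black box, whereas the paper's argument proves that convergence from scratch. Both arguments are complete modulo these standard facts.
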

\begin{proof}
Fix any $x\geq x_0$. Let us first show the reasoning behind equality~\eqref{expigmult}. As a formal calculation, if we expand the product as if it was a polynomial on $\lambda$, we have
\[  1+\lambda\sum_{y\in [x_0,x)}\Delta g(y)+\lambda^2\sum_{\substack{y,s\in [x_0,x)\\ s<y}}\Delta g(y)\Delta g(s)+\lambda^3\sum_{\substack{t,y,s\in [x_0,x)\\ t<s<y}}\Delta g(y)\Delta g(s)\Delta g(t)+\cdots\]  
Recall Proposition~\ref{gmondiscontinua}. We observe that the coefficient associated to the $n^{\text{th}}$-power of $\lambda$ equals $\frac{g_n(x)}{n!}$. Hence, the above polynomial formally equals
\[  1+\lambda g_1(x)+\lambda^2\frac{g_2(x)}{2!}+\lambda^3\frac{g_3(x)}{3!}+\cdots\]  
which precisely is the $g$-exponential series. This proves equality~\eqref{expigmult} when $[x_0,x)\cap D_g$ is a finite set. Suppose then $[x_0,x)\cap D_g$ is infinite. We need to prove that the product~\eqref{expigmult} converges unconditionally, so let $\{t_n\}_{n\in\mathbb N}$ be any possible rearrangement of the elements of $[x_0,x)\cap D_g$. We show 
\[  \prod_{n=1}^k (1+\lambda \Delta g(t_n))\to\operatorname{exp}_g(\lambda;x_0)(x)\quad\text{as } k \text{ tends to infinity}.\]  
Fix any $\varepsilon>0$, there exists a natural $m$ such that
\[  \sum_{n=m}^{\infty}\left\lvert \lambda\right\rvert^n\frac{g_n(x)}{n!}<\frac{\varepsilon}{2}.\]  
Thanks to Proposition~\ref{gmondiscontinua}, there exists a natural $p$ such that for any $k\geq p$,
\[  
\left\lvert \sum_{\substack{s_1,\dots,s_n\in \{1,\dots,k\}\\ s_1<\dots<s_n}} \Delta g(t_{s_1})\cdots \Delta g(t_{s_n})-\frac{g_n(x)}{n!}\right\rvert<\frac{\varepsilon}{2(m-1)\left\lvert \lambda\right\rvert^n}
\]  
for all $n=1,\dots,m-1$. Hence, for all $k\geq p$,
\begin{equation*}
\begin{aligned}
&\left\lvert \prod_{n=1}^k (1+\lambda \Delta g(t_n))-\operatorname{exp}_g(\lambda;x_0)(x)\right\rvert\leq \sum_{n=1}^{\infty} \left\lvert \lambda\right\rvert^n\left\lvert \sum_{\substack{s_1,\dots,s_n\in \{1,\dots,k\}\\ s_1<\dots<s_n}} \Delta g(t_{s_1})\cdots \Delta g(t_{s_n})-\frac{g_n(x)}{n!}\right\rvert\\
\leq &\sum_{n=1}^{m-1} \left\lvert \lambda\right\rvert^n\left\lvert \sum_{\substack{s_1,\dots,s_n\in \{1,\dots,k\}\\ s_1<\dots<s_n}} \Delta g(t_{s_1})\cdots \Delta g(t_{s_n})-\frac{g_n(x)}{n!}\right\rvert+\sum_{n=m}^{\infty} \left\lvert \lambda\right\rvert^n\left\lvert \frac{g_n(x)}{n!}\right\rvert<\frac{\varepsilon}{2}+\frac{\varepsilon}{2}<\varepsilon.
\end{aligned}
\end{equation*}
and we obtain the result.
\end{proof}
\begin{rem}
\label{remexpigmult}
 Combining Proposition~\ref{expprodgcgb} and the series~\eqref{expigmultpro}, we get a formula of the exponential series for $x\geq x_0$. For any derivator $g$ and $x\geq x_0$,
\[  \operatorname{exp}_g(\lambda;x_0)(x)=e^{\lambda g_1^C(x)}\prod_{y\in [x_0,x)\cap D_g} (1+\lambda \Delta g(y)).\]  
In particular, $\operatorname{exp}_g(\lambda;x_0)(x)=0$ if and only if there exists some $y\in[x_0,x)\cap D_g$ such that $1+\lambda \Delta g(y)=0$. \par Take any $x<x_0$ such that $x\in\Omega_{x_0}$, from Corollary~\ref{omegas} and Proposition~\ref{expcambiopunto}, we can take $x_1\in\Omega_{x_0}$ such that $x_1<x<x_0$ and $1+\lambda\Delta g(s)\neq0$ for $s\in[x_1,x_0)$. Hence $\operatorname{exp}_g(\lambda;x_1)(x_0)\neq0$ and
\[  \operatorname{exp}_g(\lambda;x_0)(x)=\frac{\operatorname{exp}_g(\lambda;x_1)(x)}{\operatorname{exp}_g(\lambda;x_1)(x_0)},\]  
which means
\[  \operatorname{exp}_g(\lambda;x_0)(x)=e^{\lambda g_1^C(x)}\left(\prod_{y\in [x,x_0)\cap D_g} (1+\lambda \Delta g(y))\right)^{-1}.\]  

\end{rem}
\begin{thm}
	\label{0003431549687}
	If $1+\lambda\Delta g(x)\neq0$ for all $x<x_0$, there exists a Stieltjes-{analytic} extension of $\operatorname{exp}_g(\lambda;x_0)$ to the whole real line.
\end{thm}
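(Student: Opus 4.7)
The plan is to define the extension explicitly via the product formula of Remark~\ref{remexpigmult}, and then recognize it locally as a scalar multiple of an exponential series based at a fresh center. Define $E:\mathbb{R}\to\mathbb{F}$ by
\[
E(x):=\begin{dcases}
e^{\lambda g_1^C(x)}\prod_{y\in[x_0,x)\cap D_g}(1+\lambda\Delta g(y)), & x\geq x_0,\\[0.4em]
e^{\lambda g_1^C(x)}\left(\prod_{y\in[x,x_0)\cap D_g}(1+\lambda\Delta g(y))\right)^{-1}, & x<x_0.
\end{dcases}
\]
The first case equals $\operatorname{exp}_g(\lambda;x_0)(x)$ by Remark~\ref{remexpigmult}. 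For $x<x_0$ the infinite product converges absolutely since $\sum_{y\in[x,x_0)\cap D_g}\left\lvert \Delta g(y)\right\rvert\leq g(x_0)-g(x)<\infty$, and it is nonzero because every factor $1+\lambda\Delta g(y)$ is nonzero by hypothesis. Hence $E$ is well defined on all of $\mathbb{R}$ and coincides with $\operatorname{exp}_g(\lambda;x_0)$ throughout $\Omega_{x_0}$, again thanks to Remark~\ref{remexpigmult}.

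The core identity to verify is that for every $t\in\mathbb{R}$ and every $x\geq t$,
\[
E(x)=E(t)\cdot\operatorname{exp}_g(\lambda;t)(x).
\]
When $t$ and $x$ lie on the same side of $x_0$, this is immediate by cancellation in the product formula, using $g_{1}^C(x)-g_{1}^C(t)=g_{t,1}^C(x)$. When $t<x_0\leq x$, the set $[t,x)\cap D_g$ splits as the disjoint union $([t,x_0)\cap D_g)\cup([x_0,x)\cap D_g)$; the factors on $[t,x_0)$ cancel against the denominator of $E(t)$ and the remaining factors recover precisely $E(x)$ via Remark~\ref{remexpigmult} applied to the expression of $\operatorname{exp}_g(\lambda;t)(x)$ with center $t$. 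The analogous split handles $t\leq x<x_0$ when $t<x_0$.

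With this identity in hand, Stieltjes-analyticity of $E$ at an arbitrary $y\in\mathbb{R}$ follows: pick any $t<y$. Since $\Omega_t\supset[t,\infty)$, and by Proposition~\ref{cotasupder} we have $0\leq g_{t,n}(x)\leq g_{t,1}(t+\delta)^n$ on $[t,t+\delta]$, the Weierstrass $M$-test yields absolute and uniform convergence of $\sum_{n=0}^\infty\lambda^n g_{t,n}(x)/n!$ on $[t,t+\delta]$. Therefore, on $(t,t+\delta)\ni y$,
\[
E(x)=\sum_{n=0}^\infty \frac{E(t)\lambda^n}{n!}\, g_{t,n}(x),
\]
an absolutely and uniformly convergent $g$-monomial series centered at $t$, so Definition~\ref{defanal} is satisfied at $y$ and $E$ is Stieltjes-analytic on all of $\mathbb{R}$.

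The principal obstacle is ensuring that the infinite product in the definition of $E$ on $(-\infty,x_0)$ is finite and nonzero: absolute convergence of the product (from $\sum\Delta g<\infty$ on bounded intervals) handles the first, while the standing hypothesis $1+\lambda\Delta g(y)\neq 0$ for $y<x_0$ handles the second, keeping factors bounded away from zero on any compact interval. Everything else is algebraic manipulation of the exponential/product factorization, rendered routine by Remark~\ref{remexpigmult}.
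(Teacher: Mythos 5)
Your proof is correct, and it constructs exactly the same object as the paper — since your $E$ satisfies $E(x_0)=1$ and $E(x)=E(t)\operatorname{exp}_g(\lambda;t)(x)$, it coincides with the paper's $\operatorname{Exp}_g(\lambda;x_0)(x)=\operatorname{exp}_g(\lambda;t)(x)/\operatorname{exp}_g(\lambda;t)(x_0)$ — but the route is organized differently. The paper takes the quotient of exponential series as the \emph{definition}, uses Remark~\ref{remexpigmult} only to see that the denominator $\operatorname{exp}_g(\lambda;t)(x_0)$ is nonzero, and then proves independence of the choice of $t$ and the extension property via the cocycle identity of Proposition~\ref{expcambiopunto} (itself a consequence of the Cauchy product and the change-of-center formula). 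You instead take the closed-form exponential-times-product expression as the definition and verify the gluing identity by direct cancellation inside absolutely convergent infinite products. Your version buys two things: the extension is completely explicit from the outset (the paper only records the closed form as a corollary afterwards), and you actually verify Definition~\ref{defanal} at every point via Proposition~\ref{cotasupder} and the Weierstrass $M$-test, a step the paper leaves implicit (it earlier remarks, without proof, that $\operatorname{exp}_g(\lambda;t)$ is Stieltjes-analytic on $\Omega_t$). The paper's version is slightly lighter on analysis, since it never needs to justify manipulating infinite products beyond nonvanishing at a single point; yours needs (and correctly invokes) absolute convergence of the products on $[x,x_0)$ together with the standard fact that an absolutely convergent product of nonzero factors is nonzero.
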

\begin{proof}
	Assume without loss of generality that $x_0=0$. Let $x\in\mathbb{R}$, take $t<\min\{x,0\}$, define
	\begin{equation}
		\label{3215421000654168461}
		\operatorname{Exp}_g(\lambda;0)(x):=\frac{\operatorname{exp}_g(\lambda;t)(x)}{\operatorname{exp}_g(\lambda;t)(0)}. 
	\end{equation}
	Since $t<\min\{x,0\}$ and $1+\lambda\Delta g(s)\neq0$ for all $s<0$, following Remark~\ref{remexpigmult}, the exponential series centered at $t$ takes a nonzero value at $0$. Besides, for all $y\in\Omega_0$,
	\[  
	\operatorname{exp}_g(\lambda;t)(0)\operatorname{exp}_g(\lambda;0)(y)=\operatorname{exp}_g(\lambda;t)(y),
	\]  
	and, hence, expression~\eqref{3215421000654168461} functions as an extension of $\operatorname{exp}_g(\lambda;0)$. Let us see that $\operatorname{Exp}_g(\lambda;0)(x)$ is well defined. Choose $t'\in\mathbb{R}$ such that $t'<\min\{x,0\}$. Assume without loss of generality that $t<t'$, then
	\[  
	\operatorname{exp}_g(\lambda;t)(t')\operatorname{exp}_g(\lambda;t')(y)=\operatorname{exp}_g(\lambda;t)(y)
	\]  
	for any $y\geq t'$. Hence
	\[  
	\frac{\operatorname{exp}_g(\lambda;t')(x)}{\operatorname{exp}_g(\lambda;t')(0)}=\frac{\operatorname{exp}_g(\lambda;t)(t')\operatorname{exp}_g(\lambda;t')(x)}{\operatorname{exp}_g(\lambda;t)(t')\operatorname{exp}_g(\lambda;t')(0)}=\frac{\operatorname{exp}_g(\lambda;t)(x)}{\operatorname{exp}_g(\lambda;t)(0)},
	\]  
	since $\operatorname{exp}_g(\lambda;t)(t')\neq0$. 
\end{proof}
\begin{cor}
Following Remark~\ref{remexpigmult} and expression~\eqref{3215421000654168461}, we have that,
\[  \operatorname{Exp}_g(\lambda;x_0)(x)=\begin{dcases}e^{\lambda g_1^C(x)}\prod_{y\in [x_0,x)\cap D_g} (1+\lambda \Delta g(y)), & x\geq x_0,\\ e^{\lambda g_1^C(x)}\left(\prod_{y\in [x,x_0)\cap D_g} (1+\lambda \Delta g(y))\right)^{-1}& x<x_0. \end{dcases}\]  
\end{cor}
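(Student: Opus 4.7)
The proof plan is to combine the formula from Remark~\ref{remexpigmult} with the definition of the extension given in the proof of Theorem~\ref{0003431549687}, splitting into the two cases $x\geq x_0$ and $x<x_0$.

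First I would handle the case $x\geq x_0$. The original exponential series $\operatorname{exp}_g(\lambda;x_0)(x)$ converges absolutely on $[x_0,\infty)$ (by the uniform bound by $e^{|\lambda|g_1(x)}$ established at the start of Section~\ref{exposeries}), so the extension $\operatorname{Exp}_g(\lambda;x_0)$ agrees with $\operatorname{exp}_g(\lambda;x_0)$ on this set. Thus the first case follows directly from the formula in Remark~\ref{remexpigmult}.

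Next I would handle the case $x<x_0$. Pick any $t\in\mathbb{R}$ with $t<\min\{x,x_0\}=x$ and use the definition
\[
\operatorname{Exp}_g(\lambda;x_0)(x)=\frac{\operatorname{exp}_g(\lambda;t)(x)}{\operatorname{exp}_g(\lambda;t)(x_0)}
\]
from~\eqref{3215421000654168461}. Since both $x\geq t$ and $x_0\geq t$, I can apply Remark~\ref{remexpigmult} (the $x\geq x_0$ version, now with center $t$) to numerator and denominator separately, obtaining
\[
\operatorname{exp}_g(\lambda;t)(x)=e^{\lambda g_{t,1}^C(x)}\prod_{y\in[t,x)\cap D_g}(1+\lambda\Delta g(y)),
\]
and the analogous expression with $x$ replaced by $x_0$. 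The hypothesis $1+\lambda\Delta g(y)\neq 0$ for $y<x_0$ guarantees the denominator is nonzero and makes the quotient meaningful.

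Taking the ratio, the exponential prefactor simplifies because $g_{t,1}^C(x)-g_{t,1}^C(x_0)=g^C(x)-g^C(x_0)=g_{x_0,1}^C(x)=g_1^C(x)$, so it cleanly combines into $e^{\lambda g_1^C(x)}$. For the product, I use the disjoint decomposition $[t,x_0)\cap D_g=\bigl([t,x)\cap D_g\bigr)\sqcup\bigl([x,x_0)\cap D_g\bigr)$, which turns the quotient of products into the inverse of $\prod_{y\in[x,x_0)\cap D_g}(1+\lambda\Delta g(y))$, yielding exactly the claimed formula. The only care point is verifying that the decomposition of the product is licit, which uses that each factor is nonzero by the standing hypothesis; apart from this bookkeeping the argument is a direct computation, so no serious obstacle is expected.
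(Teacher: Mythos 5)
Your argument is correct and is essentially the paper's own: the corollary is justified there by exactly this combination of the product formula of Remark~\ref{remexpigmult} (applied with center $t$ to both numerator and denominator of the quotient~\eqref{3215421000654168461}) and the disjoint splitting $[t,x_0)\cap D_g=([t,x)\cap D_g)\sqcup([x,x_0)\cap D_g)$, with nonvanishing of the denominator coming from the standing hypothesis $1+\lambda\Delta g(y)\neq0$ for $y<x_0$. No discrepancy to report.
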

\begin{thm}
	\label{solecder}
	For any $s\in\mathbb{R}$, the function $\operatorname{exp}_g(\lambda;s)$ solves the differential equation 
	\begin{equation}
		\label{000000000313132223544}
		\begin{dcases}
			v'_g(x)=\lambda v(x), & \forall x\in (s-\delta,\infty),\\
			v(s)=1,
		\end{dcases}
	\end{equation}
	for all $\delta>0$ such that $(s-\delta,\infty)\subset \Omega_s$, {where $\Omega_{s}$ is the maximal interval where 
	 $ 
		\sum_{n=0}^\infty \lambda^n \frac{g_{n}(x)}{n!}
		$ 
		converges absolutely.}
\end{thm}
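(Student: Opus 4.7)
My plan is to separate the two assertions. The initial condition $v(s)=1$ is immediate because $g_{s,0}(s)=1$ and $g_{s,n}(s)=0$ for $n \geq 1$, so only the zeroth term contributes to $\operatorname{exp}_g(\lambda;s)(s)$. The ODE will follow by recognising that $E := \operatorname{exp}_g(\lambda;s)$ satisfies an integral equation and then invoking the fundamental theorem of calculus for the Lebesgue--Stieltjes integral.

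Since $(s-\delta,\infty)\subset\Omega_s$, the defining series converges absolutely on every compact subinterval of $(s-\delta,\infty)$ containing $s$ in its interior. Applying Proposition~\ref{seriesint} to integrate term by term and reindexing gives
\[
\int_s^x E(u)\, \operatorname{d}\mu_g(u) \;=\; \sum_{n=0}^\infty \lambda^n \frac{g_{s,n+1}(x)}{(n+1)!} \;=\; \frac{E(x) - 1}{\lambda},
\]
so that
\[
E(x) = 1 + \lambda \int_s^x E(u)\, \operatorname{d}\mu_g(u), \qquad x \in (s-\delta,\infty).
\]
Moreover, on every compact $[a,b]\subset(s-\delta,\infty)$ the Weierstrass M-test (using that $|g_{s,n}(\cdot)|$ attains its maximum at an endpoint of $[a,b]$) gives uniform convergence of the defining series, so $E$ is bounded and, via Proposition~\ref{gcontsec}, $g$-continuous on $[a,b]$; in particular $E\in\mathcal{BC}_g([a,b],\mathbb{F})$.

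For each $y\in(s-\delta,\infty)$ I then pick a bracket $[a,b]\ni y$ contained in $(s-\delta,\infty)$ with $a\notin N_g^-$ and $b\notin D_g\cup N_g^+\cup C_g$; splitting $\int_s^x = \int_s^a + \int_a^x$, the integral equation expresses $E$ on $[a,b]$ as a constant plus $\lambda$ times a Stieltjes primitive, to which Proposition~\ref{primder} applies and yields $E'_g = \lambda E$ throughout $[a,b]$, and in particular at $y$. The main obstacle is producing such a bracket for every $y$. For $y$ outside $C_g$ this is routine since $D_g$ and $N_g^\pm$ are countable. The delicate case is when $y$ lies inside or at the boundary of a constancy component $(a_n, b_n)\subset C_g$: one must push $b$ strictly past $b_n$ (which is finite because $\infty\notin N_g^+$) while avoiding the countable bad set, and then observe that the value $E'_g(y)$ prescribed by the third branch of Definition~\ref{derivadadef} is computed at $b_n$ and equals $\lambda E(b_n) = \lambda E(y)$, since $E$ is $g$-continuous and therefore constant on $(a_n, b_n]$ by Proposition~\ref{continua}, which closes the argument.
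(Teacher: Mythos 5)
Your overall route is the same as the paper's: integrate the series term by term with Proposition~\ref{seriesint} to obtain the integral identity $\operatorname{exp}_g(\lambda;s)(b)-\operatorname{exp}_g(\lambda;s)(a)=\int_{[a,b)}\lambda\operatorname{exp}_g(\lambda;s)\operatorname{d}\mu_g$, and then differentiate the primitive via Proposition~\ref{primder}. The initial condition and the $\mathcal{BC}_g$ observations are fine, and your closing remark about points of $C_g$ (the third branch of Definition~\ref{derivadadef} evaluates the derivative at $b_n$, and $\operatorname{exp}_g(\lambda;s)$ is constant on $(a_n,b_n]$ by $g$-continuity) is actually more explicit than what the paper writes.

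The one step that would fail as stated is the bracket selection. You claim it is ``routine'' to find $b\notin D_g\cup N_g^+\cup C_g$ because the bad set is countable, but $C_g$ is a union of open intervals and is not countable; worse, for a derivator such as the one in Example~\ref{01324856} one has $(0,\infty)\subset D_g\cup C_g$, so \emph{no} admissible $b$ exists to the right of $0$, and the ``Besides'' clause of Proposition~\ref{primder} can never be invoked there even though the theorem still holds. The repair is cheap and is essentially contained in your own last sentence: apply only the first clause of Proposition~\ref{primder}, which holds for arbitrary $a<b$ and gives $E'_g(x)=\lambda E(x)$ for all $x\in(a,b)-C_g$; letting $a$ and $b$ sweep out $(s-\delta,\infty)$ covers every point off $C_g$ (this is exactly what the paper does), and the points of $C_g$ are then handled by your observation that $E'_g(y)$ for $y\in(a_n,b_n)$ coincides with the limit defining $E'_g(b_n)$, where $b_n\notin C_g$, so $E'_g(y)=\lambda E(b_n)=\lambda E(y)$. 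With the bracket argument replaced by this, the proof is complete and matches the paper's.
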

\begin{proof}
	We know a $\delta>0$ such that $(s-\delta,\infty)\subset \Omega_s$ exists. Given $a,b \in (s-\delta,\infty)$ such that $a<b$, from Proposition~\ref{seriesint},
	\begin{align*}
		\int_{[a,b)}\operatorname{exp}_g(\lambda;s)\operatorname{d}\mu_g &= \sum_{n=0}^\infty \lambda^{n}\frac{g_{s,n+1}(b)}{(n+1)!}- \sum_{n=0}^\infty \lambda^{n}\frac{g_{s,n+1}(a)}{(n+1)!}\\
		&=\frac{1}{\lambda}\left(\operatorname{exp}_g(\lambda;s)(b)-\operatorname{exp}_g(\lambda;s)(a)\right).
	\end{align*}
	Then
	\[  \operatorname{exp}_g(\lambda;s)(b)-\operatorname{exp}_g(\lambda;s)(a)= \int_{[a,b)}\lambda\operatorname{exp}_g(\lambda;s)\operatorname{d}\mu_g. \]  
	From Proposition~\ref{primder}, $\operatorname{exp}_g(\lambda;s)'_g(x)=\lambda \operatorname{exp}_g(\lambda;s)(x)$ for all $x\in(a,b)-C_g$. Repeating the argument with $a$ and $b$ approaching the extremes of the interval,
	\[  \operatorname{exp}_g(\lambda;s)'_g(x)=\lambda \operatorname{exp}_g(\lambda;s)(x)\quad\forall x\in(s-\delta,\infty).\qedhere\]  
\end{proof}

The Stieltjes-analytic extension of the exponential series given in Theorem~\ref{0003431549687} is a solution of the differential equation~\eqref{000000000313132223544} as well. We have then the following corollaries.

\begin{cor}
\label{expcinf}
If $1+\lambda\Delta g(x)\neq0$ for all $x<x_0$, $\operatorname{Exp}_g(\lambda;x_0)$ is a Stieltjes-analytic solution defined on the whole real line of the problem
	\begin{equation*}
		\begin{dcases}
			v'_g(x)=\lambda v(x), & x\in\mathbb R\\
			v(x_0)=1.
		\end{dcases}
	\end{equation*}
\end{cor}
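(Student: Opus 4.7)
The plan is to package Theorem~\ref{0003431549687} together with Theorem~\ref{solecder} via a local-to-global argument. Theorem~\ref{0003431549687} already constructs the extension $\operatorname{Exp}_g(\lambda;x_0):\mathbb R\to\mathbb F$ by
$$\operatorname{Exp}_g(\lambda;x_0)(x)=\frac{\operatorname{exp}_g(\lambda;t)(x)}{\operatorname{exp}_g(\lambda;t)(x_0)}$$
for any $t<\min\{x,x_0\}$, and under the hypothesis $1+\lambda\Delta g(s)\neq 0$ for all $s<x_0$, Remark~\ref{remexpigmult} gives $\operatorname{exp}_g(\lambda;t)(x_0)\neq 0$, so the denominator never vanishes. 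Consequently, on the usual-open half-line $(t,+\infty)\subset\Omega_t$ we have the pointwise identity $\operatorname{Exp}_g(\lambda;x_0)=C_t\operatorname{exp}_g(\lambda;t)$ for the nonzero constant $C_t=1/\operatorname{exp}_g(\lambda;t)(x_0)$.

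Fix an arbitrary $y\in\mathbb R$ and pick any $t<\min\{y,x_0\}$; then $y\in(t,+\infty)\subset\Omega_t$. The Stieltjes-analyticity of $\operatorname{exp}_g(\lambda;t)$ on $\Omega_t$ (noted immediately after Proposition~\ref{expcambiopunto}) transfers to $\operatorname{Exp}_g(\lambda;x_0)$ because multiplying a locally convergent $g$-monomial series by the constant $C_t$ only rescales its coefficients, so Definition~\ref{defanal} is satisfied at $y$. For the differential equation, Theorem~\ref{solecder} provides $\delta>0$ such that $\operatorname{exp}_g(\lambda;t)'_g(x)=\lambda\operatorname{exp}_g(\lambda;t)(x)$ on $(t-\delta,+\infty)\ni y$; multiplying by $C_t$ and invoking linearity of the $g$-derivative (Proposition~\ref{opder}) yields $(\operatorname{Exp}_g(\lambda;x_0))'_g(y)=\lambda\operatorname{Exp}_g(\lambda;x_0)(y)$. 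The initial condition $\operatorname{Exp}_g(\lambda;x_0)(x_0)=1$ is immediate upon setting $x=x_0$ in the defining formula. Since $y$ was arbitrary, all three requirements of the statement hold on the whole real line.

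No real obstacle remains: the substantive work (constructing the extension independently of the auxiliary $t$ and arranging for a nonzero denominator) was already carried out in Theorem~\ref{0003431549687}, so the Corollary reduces to the observation that Stieltjes-analyticity, the linear ODE, and evaluation at $x_0$ all commute with the localization $\operatorname{Exp}_g(\lambda;x_0)=C_t\operatorname{exp}_g(\lambda;t)$ on $(t,+\infty)$. The only mild care required is in choosing $t$ strictly smaller than both $y$ and $x_0$, so that $y$ and $x_0$ lie simultaneously in the domain where the quotient representation is valid and where Theorem~\ref{solecder} applies.
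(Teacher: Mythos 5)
Your proposal is correct and follows essentially the same route as the paper, which justifies the corollary by the one-line observation that the extension of Theorem~\ref{0003431549687} inherits the differential equation from Theorem~\ref{solecder}; you have simply spelled out the localization $\operatorname{Exp}_g(\lambda;x_0)=C_t\operatorname{exp}_g(\lambda;t)$ on $(t,+\infty)$ and the use of linearity that the paper leaves implicit. No gaps.
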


\begin{cor}[Exponential series]
$\operatorname{Exp}_g(1;0)$ is a Stieltjes-analytic solution defined on the whole real line of the problem
	\begin{equation*}
		\begin{dcases}
			v'_g(x)=v(x), & x\in\mathbb R\\
			v(0)=1.
		\end{dcases}
	\end{equation*}
\end{cor}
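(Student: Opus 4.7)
The plan is to derive this statement as an immediate special case of the preceding Corollary~\ref{expcinf} by taking $\lambda = 1$ and $x_0 = 0$. The only thing to check is that the hypothesis of Corollary~\ref{expcinf}, namely $1+\lambda\Delta g(x)\neq 0$ for all $x<x_0$, is automatically fulfilled when $\lambda = 1$.

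First I would observe that, since by Definition~\ref{derdef} every derivator $g$ is non-decreasing, we have $\Delta g(x) = g(x^+) - g(x) \ge 0$ for every $x \in \mathbb R$. In particular, $1 + \Delta g(x) \ge 1 > 0$ for all $x < 0$, so the hypothesis of Corollary~\ref{expcinf} is satisfied with no further assumptions on $g$. Therefore $\operatorname{Exp}_g(1;0)$ is defined on all of $\mathbb R$ by Theorem~\ref{0003431549687}, is Stieltjes-analytic on $\mathbb R$, and solves the differential equation with initial condition $v(0)=1$ according to Corollary~\ref{expcinf}.

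There is no real obstacle here: the work has already been done in Theorem~\ref{0003431549687} (construction and Stieltjes-analyticity of the extension) and Theorem~\ref{solecder} (verification that the exponential series solves the first order linear equation on its domain of convergence). The present statement is simply the observation that for $\lambda = 1$ the non-vanishing-factor hypothesis is automatic, which immediately gives a global solution. For completeness, one can also note the explicit formula
\[
\operatorname{Exp}_g(1;0)(x) = e^{\,g_1^C(x)} \prod_{y \in [0,x) \cap D_g} (1 + \Delta g(y)), \qquad x \ge 0,
\]
and the analogous reciprocal-product expression for $x < 0$, guaranteed by the corollary following Theorem~\ref{0003431549687}; the positivity $1 + \Delta g(y) \ge 1$ makes both expressions unambiguously well-defined and non-zero on all of $\mathbb R$.
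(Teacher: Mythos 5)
Your proposal is correct and matches the paper's (implicit) argument exactly: the corollary is the special case $\lambda=1$, $x_0=0$ of Corollary~\ref{expcinf}, and the hypothesis $1+\lambda\Delta g(x)\neq 0$ holds automatically because $\Delta g\geq 0$ for any derivator. Nothing further is needed.
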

\begin{dfn}
Let $g:\mathbb{R}\to\mathbb{R}$ be a derivator. Fix any $\lambda\in\mathbb R$ such that $1+\lambda \Delta g(x)\neq0$ for all $x\in\mathbb R$. We define $\operatorname{Exp}_g^\lambda\equiv\operatorname{Exp}_g(\lambda;0)$ as the exponential function associated with $\lambda$ and $g$. We call $\operatorname{Exp}_g\equiv\operatorname{Exp}_g^1$ the $g$-exponential function.
\end{dfn}
Since $\operatorname{Exp}_g^\lambda$ does not vanish we have
\[  \frac{\operatorname{Exp}_g^\lambda}{\operatorname{Exp}_g^\lambda(x_0)}=\operatorname{Exp}_g(\lambda;x_0)\]  
for all $x_0\in\mathbb R$. From Corollary~\ref{expcinf},
\[  \operatorname{Exp}_g^\lambda\in\mathcal{C}^\infty_g(\mathbb R).\]  

\begin{rem}
	Let us now look at the relationship between the exponential series and the exponential function. Both \cite{RodrigoTheory} and \cite{Onfirst} work with the differential equation
	\begin{equation}
		\label{eqdeflit}
		\begin{dcases}
			v'_g(x)=\beta (x) v(x), & \forall x\in [0,T)-C_g,\\
			v(0)=1,
		\end{dcases}
	\end{equation}
	where $T>0$, and $\beta\in\mathcal{L}_g^1([0,t),\mathbb{F})$ such that $1+\beta (x)\Delta g(x)\neq0$ for all $x\in[0,T)$. In \cite[Lemma~6.3]{RodrigoTheory} and \cite[Theorem~4.2]{Onfirst} an explicit solution is computed and called \emph{exponential function}. In fact, thanks to \cite[Theorem 7.3]{RodrigoTheory} the uniqueness and existence of the solution is guaranteed.

	Note how the hypothesis $1+\beta\Delta g\neq0$ appears. It essentially guarantees that the solution of~\eqref{eqdeflit} does not vanish at a certain discontinuity point of $g$, but it is not needed to compute the solution or to guarantee existence and uniqueness. See how that relates to Remark~\ref{remexpigmult}, as it is precisely stating the same. From Theorem~\ref{solecder}, we know that $\operatorname{exp}(\lambda;0)\in\mathcal{AC}_g([0,T],\mathbb{F})$ solves equation~\eqref{eqdeflit} for $\beta=\lambda$, so the exponential series and the exponential function defined on \cite{RodrigoTheory} and \cite{Onfirst} match for $x\geq 0$. 

	The computations made on Proposition~\ref{expigmultpro} were already shown in \cite[Theorem 4.2]{Onfirst}, as they calculated a explicit solution of~\eqref{eqdeflit}. Notice that formula~\eqref{32154000003265} is proven in \cite[Theorem 4.2]{Onfirst} as well. They say the solution of~\eqref{eqdeflit} is the solution of the same differential equation associated with $g^C$ multiplied by the solution associated with $g^B$. We managed to reconstruct all of this results independently, basing our proofs on properties of Stieltjes-analytic functions and $g$-monomial series.
\end{rem}

\subsection{Higher order linear Stieltjes differential equations with constant coefficients}
\label{higherorder}

Let us go back to the linear differential problem~\eqref{01111} and its associated difference equation~\eqref{011111}. Clearly, there is a bijection between the solutions of 
\begin{equation}
	\label{0111111}
	\begin{dcases}
		b_{n+m}=\sum_{k=0}^{m-1}\lambda_k b_{n+k}, & \lambda_k\in\mathbb{F}, \ n\geq0,\\
		b_k=c_k,& c_k\in\mathbb{F}, \ k\in\{0,\dots,m-1\}
	\end{dcases}
\end{equation}
and solutions of problem~\eqref{011111}, just by taking $b_n=a_nn!$. In fact, this bijection is a linear transformation, both the spaces of solutions of problems~\eqref{011111} and~\eqref{0111111} are vector spaces, for more details see \cite[Chapter~3]{DE}. Let us bound these sequences.
\begin{lem}
\label{lemasequences}
If $\{b_n\}_{n\in\mathbb N}$ solves problem~\eqref{0111111}, then there exists a constant $M>0$ such that
$\left\lvert b_n\right\rvert\leq M^{n+1}$ 
for all $n\geq0$.
\end{lem}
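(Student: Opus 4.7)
The plan is a straightforward strong induction, after first choosing $M$ large enough to absorb every relevant constant at once. I would set
\[ \Lambda := \max\{1,\, |\lambda_0|,\, |\lambda_1|,\, \dots,\, |\lambda_{m-1}|\}\]
and then define
\[ M := \max\bigl\{1,\, m\Lambda,\, |c_0|,\, |c_1|^{1/2},\, \dots,\, |c_{m-1}|^{1/m}\bigr\}.\]
With this choice, $|c_k| \leq M^{k+1}$ for every $k = 0, 1, \dots, m-1$, so the desired bound $|b_n| \leq M^{n+1}$ is satisfied for $n = 0, 1, \dots, m-1$. This takes care of the base of the induction.

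For the inductive step, assume that $|b_j| \leq M^{j+1}$ for every $j \in \{0, 1, \dots, n+m-1\}$, where $n \geq 0$. Substituting the inductive hypothesis into the recurrence gives
\[ |b_{n+m}| \leq \sum_{k=0}^{m-1}|\lambda_k|\,|b_{n+k}| \leq \Lambda \sum_{k=0}^{m-1} M^{n+k+1} \leq m\Lambda\, M^{n+m} \leq M^{n+m+1},\]
where the penultimate inequality uses $M \geq 1$ to bound $M^{n+k+1} \leq M^{n+m}$ for each $k \in \{0, \dots, m-1\}$, and the final inequality uses $M \geq m\Lambda$. This completes the induction and yields the claim.

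I do not expect any real obstacle: the statement simply records that solutions of a linear recurrence with constant coefficients grow at most geometrically, a feature essentially built into the recurrence itself. The bound is deliberately crude (a geometric sequence in $n$) because its purpose, in the next stage of the paper, is to guarantee absolute convergence of the $g$-monomial series whose coefficients are $b_n/n!$, in the same spirit as the control established in Proposition~\ref{sucesionizq}. The only point requiring a little attention is aligning the choice of $M$ with both the initial data $c_0, \dots, c_{m-1}$ and the coefficients $\lambda_0, \dots, \lambda_{m-1}$ simultaneously, which is what the explicit maximum above does.
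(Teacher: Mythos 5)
Your proof is correct and follows essentially the same route as the paper's: verify the bound on the initial data, then run a (strong) induction using the recurrence, with $M$ chosen large enough to absorb both the $m$ terms and the coefficients $\lambda_k$. The paper simply packages the constants slightly differently (it takes $M=mC$ with $C=\max\{1,|c_0|,\dots,|c_{m-1}|,|\lambda_0|,\dots,|\lambda_{m-1}|\}$ instead of your roots $|c_k|^{1/(k+1)}$), but the argument is the same.
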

\begin{proof}
Note first that $m\geq1$. Let $C=\max\{1,\left\lvert c_0\right\rvert,\dots,\left\lvert c_{m-1}\right\rvert,\left\lvert \lambda_0\right\rvert,\dots,\left\lvert \lambda_{m-1}\right\rvert\}$ and take $M=mC$. Since $M\geq1$, $\left\lvert b_k\right\rvert\leq M^{k+1}$ for $k\in\{0,\dots,m-1\}$. Let us show that if $n\in\mathbb N$ is such that $n\geq m-1$ and ${\left\lvert b_k\right\rvert\leq M^{k+1}}$ for $k\leq n$, then ${\left\lvert b_{n+1}\right\rvert\leq M^{n+2}}$. Since $n\geq m-1$, we have that
\[  b_{n+1}=\lambda_{m-1}b_{n}+\cdots+\lambda_{0}b_{n-(m-1)},\]  
so
\[  \left\lvert b_{n+1}\right\rvert\leq\left\lvert \lambda_{m-1}\right\rvert\left\lvert b_{n}\right\rvert+\cdots+\left\lvert \lambda_{0}\right\rvert\left\lvert b_{n-(m-1)}\right\rvert\leq mC M^{n+1}=M^{n+2}.\]  
Applying induction we have the result.\qedhere
\end{proof}
\begin{cor} \label{boundseq}
Let $\{a_n\}_{n\in\mathbb N}$ be the solution of problem~\eqref{01111}. Since $\{a_nn!\}_{n\in{\mathbb N}}$ solves problem~\eqref{0111111}, applying Lemma~\ref{lemasequences}, there exists some constant $M>0$ such that
\[  \left\lvert a_n\right\rvert\leq \frac{M^{n+1}}{n!},\]  
for all $n\geq0$.
\end{cor}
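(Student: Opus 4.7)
The plan is to reduce everything to Lemma~\ref{lemasequences} by setting $b_n := a_n n!$, as the corollary statement itself suggests. The work is essentially bookkeeping: I must verify the claimed bijection between the solution sets of~\eqref{011111} and~\eqref{0111111}, then apply the lemma, and finally divide through by $n!$.

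First I would confirm that $b_n := a_n n!$ transforms~\eqref{011111} into~\eqref{0111111} term by term. The recurrence in~\eqref{011111} reads
\[
a_{n+m}(n+m)! \;=\; \sum_{k=0}^{m-1}\lambda_k\, a_{n+k}(n+k)!,
\]
which, after substituting $b_{n+j} = a_{n+j}(n+j)!$, becomes exactly
\[
b_{n+m} \;=\; \sum_{k=0}^{m-1}\lambda_k\, b_{n+k}, \qquad n\geq 0,
\]
and the initial conditions $a_k k! = c_k$ turn into $b_k = c_k$ for $k\in\{0,\dots,m-1\}$. Hence $\{b_n\}_{n\in\mathbb N}$ is a (in fact, the) solution of problem~\eqref{0111111}.

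Next, I would invoke Lemma~\ref{lemasequences} applied to this sequence $\{b_n\}_{n\in\mathbb N}$: there exists a constant $M>0$, depending only on the $\lambda_k$ and the $c_k$, such that
\[
|b_n| \;\leq\; M^{n+1} \qquad \text{for all } n\geq 0.
\]
Substituting back $b_n = a_n n!$ and dividing by $n!>0$ gives the desired inequality
\[
|a_n| \;\leq\; \frac{M^{n+1}}{n!}, \qquad n\geq 0.
\]

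There is no real obstacle here; the only thing worth pausing on is that the bijection $a_n\leftrightarrow a_n n!$ between solutions of~\eqref{011111} and~\eqref{0111111} is linear and carries initial data to initial data, so existence and uniqueness transfer automatically, which justifies the phrase ``the solution'' in the statement. Once this linear correspondence is in place, the corollary is an immediate rewording of Lemma~\ref{lemasequences}.
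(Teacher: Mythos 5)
Your proposal is correct and follows exactly the route the paper intends: the corollary's justification is precisely the substitution $b_n = a_n n!$, which converts the recurrence and initial data of~\eqref{011111} into those of~\eqref{0111111}, followed by an application of Lemma~\ref{lemasequences} and division by $n!$. Your additional verification that the correspondence is linear and preserves initial data is a harmless elaboration of the bijection already noted in the paper just before Lemma~\ref{lemasequences}.
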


This result is key to prove convergence of the $g$-monomial series. It is basically telling us that the series converges at least on the same interval of some exponential, see Corollary~\ref{omegas}. We can now prove that the Stieltjes-analytic function defined by the solution of problem~\eqref{01111} solves the original problem~\eqref{01111}.

\begin{thm}
Let $\{a_n\}_{n\in\mathbb N}$ be the solution of problem~{\eqref{011111}}. Then
\[  v(x)=\sum_{n=0}^{\infty} a_n g_n(x)\]  
converges absolutely on $(t,+\infty)$ for some $t<x_0$, is such that $v\in \mathcal{C}^{\infty}_g((t,+\infty),{\mathbb F})$ and solves problem~\eqref{01111}.
\end{thm}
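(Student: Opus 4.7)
The plan is to execute three steps: establish absolute convergence of the series on an explicit interval $(t,+\infty)$, promote this to smoothness by iterated term-by-term $g$-differentiation, and verify the ODE together with the initial data by matching coefficients.

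The first step relies on the sharp bound from Corollary~\ref{boundseq}. Picking $M\geq 1$ with $|a_n|\leq M^{n+1}/n!$ gives
\[ \sum_{n=0}^\infty |a_n|\,|g_n(x)|\leq M\sum_{n=0}^\infty \frac{M^n|g_n(x)|}{n!}, \]
so the $g$-monomial series for $v$ is dominated by $M$ times the absolute form of $\operatorname{exp}_g(M;x_0)(x)$. By Corollary~\ref{omegas}, this latter series converges absolutely precisely on $(t,+\infty)$ with
\[ t=\sup\{s\in D_g\cap(-\infty,x_0)\,|\,\Delta g(s)\geq 1/M\} \]
(interpreted as $-\infty$ if the set is empty). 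Hence $v$ converges absolutely on $(t,+\infty)$ and uniformly on its compact subsets.

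For smoothness I would argue as follows. The formal termwise derivative $w(x)=\sum_{n\geq 1}n a_n g_{n-1}(x)$ has coefficients with $|n a_n|\leq M^{n+1}/(n-1)!$, satisfying an exponential bound of the same type, so $w$ converges absolutely on the same $(t,+\infty)$. Proposition~\ref{seriesint} applied to $w$ yields $\int_{x_0}^x w\,\operatorname{d}\mu_g=v(x)-v(x_0)$, and Proposition~\ref{primder}, used on appropriate compact subintervals, gives $v'_g(x)=w(x)$ for all $x\in(t,+\infty)\setminus C_g$. Since $w$ is a uniform limit of $g$-continuous functions on compacts it is itself $g$-continuous (Proposition~\ref{gcontsec}) and thus constant on each component $(a_n,b_n)\subset C_g$ by Proposition~\ref{continua}; moreover the standing assumption $+\infty\notin N_g^+$ guarantees that $(t,+\infty)$ satisfies condition~\eqref{2222221354324}, so $b_n\in(t,+\infty)$ and hence $v'_g(x)=v'_g(b_n)=w(b_n)=w(x)$ on $(a_n,b_n)$ as well. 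Iterating this scheme, the $k$-th shifted sequence $\{a_{n+k}(n+k)!/n!\}_n$ still satisfies an $M$-controlled exponential bound, yielding $v\in\mathcal{C}^\infty_g((t,+\infty),\mathbb F)$ together with
\[ v^{(k)}_g(x)=\sum_{n=0}^\infty a_{n+k}\,\frac{(n+k)!}{n!}\,g_n(x),\quad x\in(t,+\infty),\ k\in\mathbb N. \]

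The third step is immediate. At $x=x_0$ only the $n=0$ summand survives (since $g_n(x_0)=0$ for $n\geq 1$), giving $v^{(k)}_g(x_0)=a_k k!=c_k$ by~\eqref{011111}. Subtracting,
\[ v^{(m)}_g(x)-\sum_{k=0}^{m-1}\lambda_k v^{(k)}_g(x)=\sum_{n=0}^\infty\frac{g_n(x)}{n!}\biggl(a_{n+m}(n+m)!-\sum_{k=0}^{m-1}\lambda_k a_{n+k}(n+k)!\biggr)=0 \]
by the very recurrence in~\eqref{011111}, so $v$ solves~\eqref{01111}. The main obstacle will be handling the $C_g$-points in the differentiation step: Proposition~\ref{primder} only gives $v'_g=w$ off $C_g$, and bridging this gap requires combining the $g$-continuity of $w$ (hence its constancy on constancy components of $g$) with the standing assumption $+\infty\notin N_g^+$ to invoke condition~\eqref{2222221354324} on $(t,+\infty)$.
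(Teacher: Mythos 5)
Your proposal is correct and follows essentially the same route as the paper: bound the coefficients via Corollary~\ref{boundseq}, dominate the series by the absolute exponential series to get convergence on $(t,+\infty)$, differentiate term by term using Propositions~\ref{seriesint} and~\ref{primder} together with the standing assumption $\infty\notin N_g^+$, and conclude by matching coefficients against the recurrence~\eqref{011111}. The only difference is that you spell out more explicitly the identification of $t$ via Corollary~\ref{omegas} and the bridging of the $C_g$-points (constancy of the derivative series on constancy components), details the paper leaves implicit in its recursive application of Proposition~\ref{primder}.
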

\begin{proof}
Thanks to Corollary~\ref{boundseq}, there exists $M>0$ such that $\left\lvert a_n\right\rvert\leq \frac{M^{n+1}}{n!}$, for all $n\geq0$. For all $k\in\mathbb N$, the series
\[  \sum_{n=k}^{\infty}\left\lvert a_n\right\rvert\frac{n!}{(n-k)!}\left\lvert g_{n-k}(x)\right\rvert\leq\sum_{n=k}^{\infty}\frac{M^{n+1}}{(n-k)!}\left\lvert g_{n-k}(x)\right\rvert\leq M^{k+1} \sum_{n=0}^{\infty}\frac{M^n}{n!}\left\lvert g_{n}(x)\right\rvert\]  
converges on $(t,+\infty)$ for some $t<x_0$. That means if we apply Propositions~\ref{seriesint} and~\ref{primder} recursively we have that
\begin{equation}\label{fooorrlrl}
v^{(k)}_g(x)=\sum_{n=k}^{\infty}a_n\frac{n!}{(n-k)!}g_{n-k}(x)\end{equation}
for all $x\in(t,+\infty)$ and all $k\geq0$. Note we are assuming $\infty\notin N^+_g$ so we can $g$-differentiate $v^{(k)}_g$ at all points of $(t,+\infty)$. This means $v\in \mathcal{C}^{\infty}_g((t,+\infty),{\mathbb F})$. Now, from equation~\eqref{fooorrlrl}, for all $k\geq0$,
\[  v^{(k)}_g(x_0)=a_k k!.\]  
Therefore, $v^{(k)}_g(x_0)=c_k$ for all $k\in\{0,\dots,m-1\}$ since $\{a_n\}_{n\in\mathbb N}$ solves problem~\eqref{01111}. Besides, for all $x\in(t,+\infty)$,
\[  \begin{aligned}
\sum_{k=0}^{m-1}\lambda_k v^{(k)}_g(x)&=\sum_{k=0}^{m-1}\lambda_k \sum_{n=0}^{\infty}a_{n+k}\frac{(n+k)!}{n!}g_{n}(x)=\sum_{n=0}^{\infty}\left(\sum_{k=0}^{m-1}\lambda_k a_{n+k}\frac{(n+k)!}{n!}\right)g_{n}(x)\\&=\sum_{n=0}^{\infty}a_{n+m}\frac{(n+m)!}{n!}g_{n}(x)= v^{(m)}_g(x).
\end{aligned}
\]  
Hence, $v$ solves problem~\eqref{01111}.
\end{proof}

Take a look now at the nonhomogeneous case. Fix some $x_0\in\mathbb R$. Consider the following problem:
\begin{equation}
	\label{nonh}
	\begin{dcases}
		v^{(m)}_g(x)=\sum_{k=0}^{m-1}\lambda_k v^{(k)}_g(x)+f(x), & \lambda_k\in\mathbb{F},\\
		v^{(k)}_g(x_0)=c_k,& c_k\in\mathbb{F}, \ k\in\{0,\dots,m-1\}.
	\end{dcases}
\end{equation}
Suppose we can write $f$ as a $g$-monomial series centered at $x_0$. Then,
\[  f(x)=\sum_{n=0}^\infty r_n g_n(x),\]  
for some $\{r_n\}_{n\in\mathbb N}\subset \mathbb R$. We obtain the following difference equation matching coefficients:
\begin{equation}
	\label{dfeqnonh}
	\begin{dcases}
		a_{n+m}(n+m)!=\sum_{k=0}^{m-1}\lambda_k a_{n+k}(n+k)!+r_nn!, & \lambda_k\in\mathbb{F}, \ n\geq0,\\
		a_kk!=c_k,& c_k\in\mathbb{F}, \ k\in\{0,\dots,m-1\}.
	\end{dcases}
\end{equation}
Analogously, there is a bijection between the solutions of
\begin{equation}
	\label{dfeqnonhalt}
	\begin{dcases}
		b_{n+m}=\sum_{k=0}^{m-1}\lambda_k b_{n+k}+r_nn!, & \lambda_k\in\mathbb{F}, \ n\geq0,\\
		b_k=c_k,& c_k\in\mathbb{F}, \ k\in\{0,\dots,m-1\},
	\end{dcases}
\end{equation}
and solutions of problem~\eqref{dfeqnonh}. As we did in Lemma~\ref{lemasequences}, we will bound again these sequences, as it is needed for the convergence of the $g$-monomial series.

\begin{lem}
\label{lmaolol}
If $\{b_n\}_{n\in\mathbb N}$ solves problem~\eqref{dfeqnonhalt}, then there exists a constant $M>0$ such that
\[  \left\lvert b_{n+m}\right\rvert\leq \sum_{k=0}^{n}M^{k}R_{n-k}\]  
for all $n\geq0$, where $R_n=\left\lvert r_n\right\rvert n!$ for all $n\geq1$ and $R_0>0$ is some positive number.
\end{lem}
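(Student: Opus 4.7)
The plan is to mimic the proof of Lemma~\ref{lemasequences}, but because of the inhomogeneity the cleanest path is to first decompose the solution. Write $b_n=b_n^{(h)}+b_n^{(p)}$, where $\{b_n^{(h)}\}_{n\in\mathbb N}$ solves the homogeneous problem~\eqref{0111111} with initial data $c_0,\dots,c_{m-1}$ and $\{b_n^{(p)}\}_{n\in\mathbb N}$ solves the nonhomogeneous problem~\eqref{dfeqnonhalt} with zero initial data. By linearity of the recurrence this decomposition exists and is unique, and Lemma~\ref{lemasequences} applied to the homogeneous part yields a constant $\tilde M>0$ with $|b_n^{(h)}|\le \tilde M^{n+1}$ for all $n\ge 0$.

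The heart of the argument is to bound $b^{(p)}$. Set $C=\max\{1,|\lambda_0|,\dots,|\lambda_{m-1}|\}$, pick any $M\ge C+1$, and define $R_k'=|r_k|k!$ for $k\ge 1$ and $R_0'=|r_0|$. I claim by strong induction on $n\ge 0$ that
\[
|b_{n+m}^{(p)}|\le\sum_{k=0}^n M^k R_{n-k}'.
\]
The base case $n=0$ is immediate since $b_m^{(p)}=r_0$. For the inductive step the recurrence gives
\[
|b_{n+m}^{(p)}|\le\sum_{k=0}^{m-1}|\lambda_k|\,|b_{n+k}^{(p)}|+R_n';
\]
the terms with $n+k<m$ vanish because of the zero initial data, while the remaining terms admit the inductive bound. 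Reindexing by $p=n+k-m$, swapping the order of summation, and applying the geometric estimate $\sum_{q=0}^{n-1-j}M^q\le M^{n-j}/(M-1)$, the hypothesis $M\ge C+1$ makes the prefactor $C/(M-1)\le 1$, which exactly recovers the target bound.

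Finally, combine the two estimates. Choose $\hat M\ge \max\{\tilde M,M\}$ and take $R_0\ge \hat M^{\,m+1}+|r_0|$; then the exponential envelope $\tilde M^{n+m+1}$ coming from $b^{(h)}$ is absorbed into the leading term $\hat M^n R_0$ of the target sum, the $|r_0|$ summand absorbs $R_0'$, and the remaining $R_k'$ terms are already of the required form. This yields $|b_{n+m}|\le\sum_{k=0}^n \hat M^k R_{n-k}$ with $R_k=R_k'$ for $k\ge 1$, as desired. The only delicate point is the reindexing and switching of summation in the inductive step for $b^{(p)}$; the case split $n+k<m$ versus $n+k\ge m$ is exactly what makes the decomposition $b=b^{(h)}+b^{(p)}$ worthwhile, since it collapses the degenerate case to zero rather than forcing a cumbersome extra term involving the $c_j$'s.
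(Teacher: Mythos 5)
Your proof is correct, but it follows a genuinely different route from the paper's. The paper runs a single strong induction on the full inhomogeneous sequence: it sets $C=\max\{1,\sum_k\left\lvert\lambda_k\right\rvert,\sum_k\left\lvert b_k\right\rvert\}$, $M=C^2$, absorbs the initial data at the very first step by defining $R_0=M+\left\lvert r_0\right\rvert$, and then exploits the fact that the bounding sums $S_n=\sum_{k=0}^{n}M^{k}R_{n-k}$ are nondecreasing in $n$ (because $M\ge 1$ and all $R_j\ge 0$), so that every earlier term $\left\lvert b_{n+k}\right\rvert$ is dominated by the single quantity $S_n$ and the recurrence closes in one line via $\left(\sum_k\left\lvert\lambda_k\right\rvert\right)S_n+R_{n+1}\le MS_n+R_{n+1}=S_{n+1}$ — no double sum, no geometric resummation. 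You instead split $b=b^{(h)}+b^{(p)}$, reuse Lemma~\ref{lemasequences} for the homogeneous part, and for the particular part with zero initial data you bound each $\left\lvert b^{(p)}_{n+k}\right\rvert$ by its own partial sum $S'_{n+k-m}$, swap the order of summation, and pay for the resulting geometric series with the hypothesis $M\ge C+1$; the initial data only enters at the end, inflating $R_0$. Both arguments are sound (I checked your reindexing and the final absorption $\tilde M^{\,n+m+1}+\hat M^{\,n}\left\lvert r_0\right\rvert\le \hat M^{\,n}R_0$, which is exactly where the freedom ``$R_0>0$ is some positive number'' is used). The paper's version is shorter and avoids the double sum entirely; yours is more modular, makes transparent that the initial data contributes only an exponential envelope absorbed into the leading $R_0$ term, and cleanly explains why the degenerate indices $n+k<m$ cause no trouble.
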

\begin{proof}
Let $\{b_n\}_{n\in\mathbb N}$ be the solution to problem~\eqref{dfeqnonhalt}. Take $C=\max\{1,\sum_{k=0}^{m-1}\left\lvert \lambda_k\right\rvert,\sum_{k=0}^{m-1}\left\lvert b_k\right\rvert\}$ and define $M=C^2\geq C$. Since $M\geq1$, the powers of $M$ form a nondecreasing sequence. We have that,
\[  \left\lvert b_m\right\rvert\leq\sum_{k=0}^{m-1} \left\lvert \lambda_k\right\rvert \left\lvert b_k\right\rvert+\left\lvert r_0\right\rvert\leq C^2+\left\lvert r_0\right\rvert=M+\left\lvert r_0\right\rvert.\]  
Define $R_0=M+\left\lvert r_0\right\rvert$. Let us bound $b_{m+1}$ to show how the process continues. Note that $\left\lvert b_k\right\rvert\leq M+\left\lvert r_0\right\rvert=R_0$ for $k=1,\dots,m$. So
\[  \left\lvert b_{m+1}\right\rvert\leq\sum_{k=0}^{m-1} \left\lvert \lambda_k\right\rvert \left\lvert b_{k+1}\right\rvert+\left\lvert r_1\right\rvert\leq MR_0+\left\lvert r_1\right\rvert= MR_0+R_1.\]  
Now, we have that $\left\lvert b_k\right\rvert\leq MR_0+R_1$ for $k=2,\dots,m+1$, hence, we can bound the following coefficient and so on. Let us apply induction, let $n\in\mathbb N$ be such that
\[  \left\lvert b_{i+m}\right\rvert\leq \sum_{k=0}^{i}M^{k}R_{i-k}\]  
for all $i\leq n$. By definition of $M$ and $R_k$, $\left\lvert b_i\right\rvert\leq\sum_{k=0}^{n}M^{k}R_{n-k}$ for all $i\leq n$. Then
\[  \left\lvert b_{n+1+m}\right\rvert\leq \sum_{k=0}^{m-1} \left\lvert \lambda_k\right\rvert \left\lvert b_{k+1}\right\rvert+\left\lvert r_{n+1}\right\rvert(n+1)!\leq M\left(\sum_{k=0}^{n}M^{k}R_{n-k}\right)+R_{n+1}=\sum_{k=0}^{n+1}M^{k}R_{n+1-k}.\qedhere\]  
\end{proof}

\begin{thm}
\label{eqhomth}
Let $\{a_n\}_{n\in\mathbb N}$ be the solution of problem~\eqref{dfeqnonh}. {Assume}
\begin{equation}\label{serie f}f(x)=\sum_{n=0}^{\infty} r_n g_n(x)\end{equation}
converges absolutely {on $[x_0,c]$ for some $c>x_0$. Suppose $x_0\notin N^{-}_g$ and $c\notin D_g\cup N^{+}_g\cup C_g$, so the $g$-derivative is well defined at all points of $[x_0,c]$}, then
\begin{equation}\label{serie v}v(x)=\sum_{n=0}^{\infty} a_n g_n(x)\end{equation}
converges absolutely on $[x_0,c]$, is such that $v\in\mathcal{C}^{m}_g([x_0,c],{\mathbb F})$ and solves problem~\eqref{nonh}.
\end{thm}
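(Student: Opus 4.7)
The plan is to mimic the proof of Theorem~\ref{echom} but with a more delicate coefficient bound that absorbs the nonhomogeneous term. Since $\{a_n n!\}_{n\in\mathbb N}$ solves problem~\eqref{dfeqnonhalt}, Lemma~\ref{lmaolol} gives a constant $M>0$ such that
\[ |a_{n+m}|(n+m)!\le \sum_{k=0}^n M^k R_{n-k},\qquad n\ge 0, \]
with $R_k=|r_k|k!$ for $k\ge 1$ and $R_0>0$ a suitable constant. For $x\ge x_0$, Proposition~\ref{cotasupder} furnishes two crucial estimates: $g_{n+m}(x)\le g_n(x)g_m(x)$ and, more generally, $\frac{g_{j+p}(x)}{(j+p)!}\le \frac{g_j(x)g_p(x)}{j!\,p!}$ (the latter follows because $\binom{j+p}{j}\ge 1$). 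These reduce the non-multiplicativity of $g$-monomials to a manageable submultiplicative inequality.

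To establish absolute convergence of $v$ on $[x_0,c]$, I would combine the above bounds and then invoke Theorem~\ref{sumasij} to reindex the resulting double sum. Setting $q=n-k$, the manipulation yields
\[ \sum_{n=0}^\infty |a_{n+m}|\,g_{n+m}(x)\;\le\; \frac{g_m(x)}{m!}\sum_{k=0}^\infty\frac{M^k g_k(x)}{k!}\sum_{q=0}^\infty \frac{R_q\, g_q(x)}{q!}. \]
The first factor on the right is the exponential series $\operatorname{exp}_g(M;x_0)(x)$, which converges absolutely on $[x_0,\infty)$ by Proposition~\ref{convexpolol}. The second factor equals $R_0+\sum_{q\ge1}|r_q|g_q(x)$, which is bounded on $[x_0,c]$ by the hypothesis on $f$. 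Together with the trivially bounded head $\sum_{n<m}|a_n|g_n(x)$, this proves absolute convergence of $v$ on $[x_0,c]$.

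For the regularity, the same Cauchy-product argument, applied to the formal derivative series $\sum_{n\ge k}a_n\frac{n!}{(n-k)!}g_{n-k}(x)$ (after shifting $n=j+m$ and using $g_{j+m-k}(x)\le g_j(x)g_{m-k}(x)$), shows that each such series is dominated, up to a factor $g_{m-k}(x)/(m-k)!$, by the majorant just constructed; hence each converges absolutely and uniformly on $[x_0,c]$ for every $k\in\{0,\dots,m\}$. Proposition~\ref{gcontsec} then yields $g$-continuity of each limit, and since $[x_0,c]$ satisfies Definition~\ref{derivadadef}, iterated application of Propositions~\ref{seriesint} and~\ref{primder} identifies the $k$-th series with $v^{(k)}_g$ pointwise on $[x_0,c]$. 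Hence $v\in\mathcal{C}^m_g([x_0,c],\bF)$.

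Finally, substituting the series for $v^{(k)}_g$ into the differential equation and grouping by $g_n(x)/n!$ reduces the verification to the identity
\[ a_{n+m}(n+m)!-\sum_{k=0}^{m-1}\lambda_k a_{n+k}(n+k)!=r_n n!, \]
which is exactly the recurrence~\eqref{dfeqnonh}; meanwhile the initial conditions $v^{(k)}_g(x_0)=c_k$ follow from $g_n(x_0)=0$ for $n\ge 1$ together with $a_k k!=c_k$. The main obstacle in this plan is the convergence step: the coefficient bound from Lemma~\ref{lmaolol} is a convolution, and since $g$-monomials are only submultiplicative (and only on $[x_0,\infty)$) the double sum has to be rearranged with care to factor out both an exponential-type series and the series defining $f$. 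This is also why the theorem is restricted to $x\ge x_0$, mirroring the asymmetric behaviour already seen in Propositions~\ref{cotasupder} and~\ref{cotainfizq}.
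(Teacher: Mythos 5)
Your proposal is correct and follows essentially the same route as the paper: the convolution bound from Lemma~\ref{lmaolol}, the submultiplicativity of $g$-monomials from Proposition~\ref{cotasupder} combined with the Cauchy product of the exponential-type series $\sum M^k g_k/k!$ with the series defining $f$, identification of the derivatives via Propositions~\ref{seriesint} and~\ref{primder}, and coefficient matching for the equation and initial conditions. The only difference is presentational — the paper majorizes the $m$-th derivative series $\sum_n |a_{n+m}|\frac{(n+m)!}{n!}|g_n(x)|$ directly (using $\binom{n}{k}^{-1}\le 1$) and recovers the lower-order series by integration, whereas you majorize $v$ and each derivative series separately — which does not change the substance of the argument.
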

\begin{proof}
Let $\{a_n\}_{n\in\mathbb N}$ be the solution of problem~\eqref{dfeqnonh}. Thanks to Lemma~\ref{lmaolol}, there exists $M>0$ such that \[  \left\lvert a_{n+m}\right\rvert\leq \frac{1}{(n+m)!}\sum_{k=0}^{n}M^{k}R_{n-k},\]  for all $n\geq0$, where $R_n=\left\lvert r_n\right\rvert n!$ for all $n\geq1$ and $R_0>0$ is some positive number. Denote $\check r_n=\frac{R_n}{n!}$ for all $n\geq0$. We are going to prove that the $g$-monomial series the sequence $\{a_n\}_{n\in\mathbb N}$ defines converges absolutely on $[x_0,c)$. For any $x\in[x_0,c]$, from the product formula for absolutely convergent series, the following series converges:
\begin{equation}\label{pepe}
\left(\sum_{n=0}^\infty M^n \frac{\left\lvert g_{n}(x)\right\rvert}{n!}\right)\left(\sum_{n=0}^\infty \check r_n\left\lvert g_n(x)\right\rvert\right)=\sum_{n=0}^\infty \sum_{k=0}^n \frac{M^{k}}{k!}\check r_{n-k}\left\lvert g_{k}(s)\right\rvert\left\lvert g_{n-k}(x)\right\rvert<\infty,\end{equation}
note that $\check r_n$ and $\left\lvert r_n\right\rvert$ define $g$-monomial series that converge at the same points. Then, for any $x\in[x_0,c]$, we have that
\begin{equation}\label{pepe2}\begin{aligned}
\sum_{n=0}^{\infty}\left\lvert a_{n+m}\right\rvert\frac{(n+m)!}{n!}\left\lvert g_{n}(x)\right\rvert&\leq\sum_{n=0}^{\infty}\sum_{k=0}^{n}\frac{M^{k}R_{n-k}}{n!}\left\lvert g_{n}(x)\right\rvert=\sum_{n=0}^{\infty}\sum_{k=0}^{n}\frac{M^{k}\check r_{n-k}(n-k)!}{n!}\left\lvert g_{n}(x)\right\rvert\\&=\sum_{n=0}^{\infty}\sum_{k=0}^{n}\frac{1}{{n\choose k}}\frac{M^{k}}{k!}\check r_{n-k}\left\lvert g_{n}(x)\right\rvert\leq\sum_{n=0}^{\infty}\sum_{k=0}^{n}\frac{M^{k}}{k!}\check r_{n-k}\left\lvert g_{n}(x)\right\rvert<\infty,
\end{aligned}
\end{equation}
applying Proposition~\ref{cotasupder} and equation~\eqref{pepe}. Applying now Propositions~\ref{seriesint} and~\ref{primder} $m$ consecutive times, we have that 
\[  v^{k}_g(x)=\sum_{n=0}^{\infty}a_{n+k}\frac{(n+k)!}{n!}g_{n}(x)\]  
for all $x\in[x_0,c]$ and $k\in\{0,\dots,m\}$, with those series converging absolutely on $[x_0,c]$. Hence $v^{(k)}_g(x_0)=a_k k!=c_k$ for all $k\in\{0,\dots,m-1\}$. Besides, for all $x\in[x_0,c]$,
\[  \begin{aligned}
\sum_{k=0}^{m-1}\lambda_k v^{(k)}_g(x)+f(x)&=\sum_{k=0}^{m-1}\lambda_k \sum_{n=0}^{\infty}a_{n+k}\frac{(n+k)!}{n!}g_{n}(x)+\sum_{n=0}^{\infty}r_{n}g_n(x)\\&=\sum_{n=0}^{\infty}\left(\sum_{k=0}^{m-1}\lambda_k a_{n+k}\frac{(n+k)!}{n!}+r_n\right)g_{n}(x)=\sum_{n=0}^{\infty}a_{n+m}\frac{(n+m)!}{n!}g_{n}(x)= v^{(m)}_g(x).
\end{aligned}
\]  
Thus, $v$ solves problem~\eqref{nonh}. \qedhere
\end{proof}
\begin{rem}
We can ensure convergence of the $g$-monomial series~\eqref{serie v} at points to the left of $x_0$ if the series~\eqref{serie f} converges at points to the left of $x_0$ as well. Suppose the $g$-monomial series~\eqref{serie f} converges absolutely on $[c_1,c_2]$ for some $c_1\in(-\infty,x_0)$ and $c_2\in(x_0,+\infty)$. From Theorem~\ref{eqhomth}, the series~\eqref{serie v} converges absolutely on $[x_0,c_2]$. We have two cases:
\begin{enumerate}[noitemsep, itemsep=.1cm]
		\item[~1.] $\Delta g(x)=0$ for all $x\in[c_1,x_0)$. Hence $g_n(x)=g_1(x)^n$ and $\left\lvert g_{n-k}(x)\right\rvert\left\lvert g_k(x)\right\rvert=\left\lvert g_n(x)\right\rvert$ for all $x\in[c_2,x_0]$. Thus, equation~\eqref{pepe} implies the absolute convergence of the $g$-monomial series~\eqref{pepe2} for all $x\in[c_2,c_1]$. Therefore, the series~\eqref{serie v} converges absolutely and solves problem~\eqref{nonh} on $[c_2,c_1]$.
		\item[~2.] There exists some $x_1\in[c_1,x_0)$ such that $\Delta g(x_1)\neq0$. That means, thanks to Proposition~\ref{sucesionizq}, that there exists some constant $M>0$ such that $\left\lvert r_n\right\rvert n!\leq M^n$ for all $n\geq0$. Applying Lemma~\ref{lmaolol} we have that there exists another constant $C>0$ such that $\left\lvert a_n\right\rvert n!\leq C^n$ for all $n\geq0$. Hence, there exists some $t<x_0$ such that both $g$-monomial series~\eqref{serie v} and~\eqref{serie f} converge absolutely on $(t,+\infty)$. Thus, $v$ is well defined and solves problem~\eqref{nonh} on $(t,+\infty)$.
\end{enumerate}
\end{rem}

\section*{Acknowledgments}
The authors were partially supported by Xunta de Galicia, project ED431C 2019/02, and by the Agencia Estatal de Investigaci\'on (AEI) of Spain under grant MTM2016-75140-P, co-financed by the European Community fund FEDER.

\section*{Competing interests}
The authors declare no competing interests.

\bibliographystyle{spmpsciper}
\bibliography{SA}

 \end{document}